\def\RR{{\mathbb R}}
\def\MM{{\mathbb M}}
\def\NN{{\mathbb N}}
\def\OO{{\mathbb O}}
\def\SO{{\mathbb S\mathbb O}}
\def\SS{{\mathbb{S}}}
\def\Z{\mathcal{Z}}
\def\R{\mathcal{R}}
\def\Q{\mathcal{Q}}
\def\det{{\rm det}}
\def\Det{{\rm det}}
\def\Aff{{\rm Aff}}
\def\wto{\rightharpoonup}
\def\dto{{\scriptstyle\buildrel{\scriptstyle\longrightarrow}
\over{{}_{\scriptstyle\longrightarrow}}}}
\def\eps{\varepsilon}
\def\then{\Rightarrow}
\def\inff{\mathop{\inf\limits_{a\in\RR^N}}}
\def\infff{\mathop{\inff\limits_{b\in\RR^m}}}
\def\cupp{\mathop{\cup}}
\def\diag{{\rm diag}}
\def\rank{{\rm rank}}
\def\ell{l}
\newtheorem{theorem}{\bf Theorem}[section]
\newtheorem{lemma}[theorem]{\bf Lemma}
\newtheorem{corollary}[theorem]{\bf Corollary}
\newtheorem{problem}{\bf Problem}[section]
\newtheorem{question}{\bf Question}[section]
\theoremstyle{definition}
\newtheorem{definition}[theorem]{\bf Definition}
\theoremstyle{remark}
\newtheorem{remark}[theorem]{\bf Remark}
\title{Relaxation and 3d-2d passage theorems in hyperelasticity}
\author{\sc Omar Anza Hafsa}
\address{UNIVERSITE MONTPELLIER II, UMR-CNRS 5508, LMGC,  Place Eug\`ene Bataillon, 34095 Montpellier, France.}
\email{Omar.Anza-Hafsa@univ-montp2.fr}
\author{\sc Jean-Philippe Mandallena}
\address{Laboratoire MIPA (Math\'ematiques, Informatique, Physique et Applications)\newline UNIVERSITE DE NIMES, Site des Carmes, Place Gabriel P\'eri, 30021 N\^\i mes, France.}
\email{jean-philippe.mandallena@unimes.fr}
\keywords{Calculus of variations, integral representation, relaxation, 3d-2d passage, $\Gamma(\pi)$-convergence, determinant type constraints, hyperelasticity}
\begin{document}

\maketitle

\begin{abstract}
We give an overview of relaxation and 3d-2d passage theorems in hyperelasticity in the framework of the multidimensional calculus of variations. Some open questions are addressed. This paper, which is an expanded version of the outline-paper \cite{oah-jpm09b}, comes as a companion to \cite{oah-jpm09a}.
\end{abstract}

\tableofcontents


\section{Introduction}

These notes are concerned with the problems of relaxation and 3d-2d passage under determinant type constraints naturally related to hyperelasticity in the framework of the multidimensional calculus of variations. Our goal is, on the one hand,  to give an overview of our works (see \cite{oah-jpm06,oah-jpm07,oah-jpm08a,oah-jpm08b}) concerning these two problems, and, on the other hand, to highlight the fact that the  Dacorogna relaxation theorem (proved in 1982, see Theorem \ref{DacorognaTheorem2}) and the Le Dret-Raoul 3d-2d passage theorem (proved in 1993, see Theorem \ref{LeDret-Raoult-Theorem}) can be extented to theorems (see Theorems \ref{IRTnon-finite-case} and \ref{RD-s}) which are consistent (almost consistent for the relaxation problem) with the setting of hyperelasticity, whose the two basic conditions are:
\begin{itemize}
\item[(i)] the noninterpenetration of the matter and
\item[(ii)] the necessity of an infinite amount of energy to compress a finite volume of matter into zero volume.
\end{itemize}
 Despite the restriction on the polynomial growth of the energy density which is not compatible with (i) and (ii), the Dacorogna theorem provides the model of nonlinear relaxation theorems related to hyperelasticity. In Section 2, we show that this model theorem can be improved by introducing the class of {\em ample energy densities}, i.e., ``energy densities having a quasiconvexification which is of polynomial growth", see Definition \ref{Ample-Integrands} and Theorems \ref{IRTnon-finite-case} and \ref{TheoremA-B}, and we make clear the fact that the ample energy densities are consistent with (ii) (see \S 2.6). Similarly, in spite of the polynomial growth hypothesis on the energy density, the Le Dret-Raoult theorem provides the model of nonlinear dimension reduction theorems in hyperelasticity. In Section 3, we show that this theorem can be extended to the setting of ample energy densities (see Theorem \ref{RD-w}) as well as to the setting of hyperelasticity, i.e., to the case of energy densities which are compatible with (i) and (ii) (see Theorem \ref{RD-s}). This latter theorem gives an answer to the 3d-2d passage problem in hyperelasticity in the same spirit as the works of Ball (see \cite{Ball77}), Acerbi-Buttazzo-Percivale (see \cite{ABP91}) and  Friesecke-James-M\"uller (see \cite{FJM02}). It is the result of several works: mainly, the attempt of Percivale in 1991 (see \cite{percivale91}), the papers of Le Dret and Raoult (see \cite{ledret-raoult93,ledret-raoult95}) and the thesis of Ben Belgacem (see \cite{benbelgacem96}, see also \cite{benbelgacem97,benbelgacem00}).


\section{Relaxation theorems with determinant type constraints}

\subsection{Statement of the problem} Let $m,N\in\NN$ (with $\min\{m,N\}>1$), let $p>1$ and let $W:\MM^{m\times N}\to[0,+\infty]$ be Borel measurable and $p$-coercive, i.e., 
$$
\exists C>0\ \forall F\in\MM^{m\times N}\ W(F)\geq C|F|^p,
$$
 where $\MM^{m\times N}$ denotes the space of real $m\times N$ matrices. Define the functional $I:W^{1,p}(\Omega;\RR^m)\to[0,+\infty]$ by
$$
\displaystyle I(\phi):=\int_\Omega W(\nabla\phi(x))dx,
$$
where $\Omega\subset\RR^N$ is a bounded open set, and consider  $\overline{I}:W^{1,p}(\Omega;\RR^m)\to[0,+\infty]$ (the relaxed functional of $I$) given by 
$$
\displaystyle\overline{I}(\phi):=\inf\left\{\liminf_{n\to+\infty}I(\phi_n):\phi_n\stackrel{L^p}{\to}\phi\right\}.
$$
Denote the quasiconvex envelope of $W$ by $\Q W:\MM^{m\times N}\to[0,+\infty]$. The problem of the relaxation is the following.
\begin{problem}\label{problem1}
Prove (or disprove) that 
$$
\forall\phi\in W^{1,p}(\Omega;\RR^m)\ \ \overline{I}(\phi)=\int_\Omega \Q W(\nabla\phi(x))dx
$$
and find a representation formula for $\Q W$.
\end{problem}
At the begining of the eighties, in \cite{dacorogna82} Dacorogna answered to Problem \ref{problem1} in the case where $W$ is ``finite and without singularities" (see \S 2.2). Recently, in \cite{oah-jpm07,oah-jpm08b}  we extended the Dacorogna theorem as Theorem \ref{RTnon-finite-case} and Theorem \ref{IRTnon-finite-case} (see \S 2.3 and \S 2.4) and we showed that these theorems can be used to deal with Problem \ref{problem1} under the ``weak-Determinant Constraint", i.e., when $m=N$ and $W:\MM^{N\times N}\to[0,+\infty]$ is compatible with the following two conditions:
\begin{equation}\label{w-DC}
\left\{\begin{array}{l}W(F)=+\infty\iff -\delta\leq \det F\leq 0\hbox{ with $\delta\geq 0$ (possibly very large)}\\
W(F)\to+\infty\hbox{ as }\det F\to 0^+\end{array}\right.
\end{equation}
(see \S 2.6). However, an answer to Problem \ref{problem1} under the ``strong-Determinant Constraint", i.e., when $m=N$ and $W:\MM^{N\times N}\to[0,+\infty]$ is compatible with the two basic conditions of hyperelasticity:
\begin{equation}\label{s-DC}
\left\{\begin{array}{ll}W(F)=+\infty\iff \det F\leq 0&\hbox{ (non-interpenetration of matter)}\\
W(F)\to+\infty\hbox{ as }\det F\to 0^+&\left(\begin{array}{l}\hbox{necessity of an infinite amount}\\
\hbox{of energy to compress a finite}\\
\hbox{volume into zero volume}\end{array}\right)\end{array}\right.,
\end{equation}
is still unknown (see \S 2.7).


\subsection{Representation of $\Q W$  and $\overline{I}$: finite case} Let $\Z_\infty W, \Z W:\MM^{m\times N}\to[0,+\infty]$ be respectively defined by:
\begin{itemize}
\item[\SMALL$\blacklozenge$] $\displaystyle \Z_\infty W(F):=\inf\left\{\int_Y W(F+\nabla\varphi(y))dy:\varphi\in W^{1,\infty}_0(Y;\RR^m)\right\}$;
\item[\SMALL$\blacklozenge$] $\displaystyle\Z W(F):=\inf\left\{\int_Y W(F+\nabla\varphi(y))dy:\varphi\in\Aff_0(Y;\RR^m)\right\}$,
\end{itemize}
where $Y:=]0,1[^N$, $W^{1,\infty}_0(Y;\RR^m):=\{\varphi\in W^{1,\infty}(Y;\RR^m):\varphi=0\hbox{ on }\partial Y\}$ and $\Aff_0(Y;\RR^m):=\{\varphi\in \Aff(Y;\RR^m):\varphi=0\hbox{ on }\partial Y\}$ with $\Aff(Y;\RR^m)$ denoting the space of continuous piecewise affine functions from $Y$ to $\RR^m$.
\begin{remark}
\rm One always has $W\geq\Z W\geq \Z_\infty W\geq\Q W$.
\end{remark}
\begin{theorem}[Representation of $\Q W$ \cite{dacorogna82}]\label{DacorognaTheorem1}
If $W$ is continuous and finite then 
$$
\Q W=\Z W=\Z_\infty W.
$$
\end{theorem}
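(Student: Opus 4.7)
The remark supplies the chain $W \geq \Z W \geq \Z_\infty W \geq \Q W$, so to prove the theorem it suffices to close the loop by establishing $\Z W \leq \Q W$. I would proceed in two steps: (a) $\Z W \leq \Z_\infty W$, by an approximation argument, and (b) $\Z_\infty W \leq \Q W$, by showing $\Z_\infty W$ is itself quasiconvex and $\leq W$, and hence bounded above by the largest quasiconvex minorant $\Q W$ of $W$.

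For (a), fix $F \in \MM^{m\times N}$ and a $\psi \in W^{1,\infty}_0(Y;\RR^m)$ that is a near-minimizer in the definition of $\Z_\infty W(F)$. I would approximate $\psi$ by $\psi_n \in \Aff_0(Y;\RR^m)$ obtained by piecewise-affine interpolation on a simplicial subdivision of $Y$ respecting $\partial Y$; the standard construction gives $\psi_n \to \psi$ uniformly with $\|\nabla\psi_n\|_{L^\infty}$ bounded by a constant times $\|\nabla\psi\|_{L^\infty}$ and $\nabla\psi_n(y) \to \nabla\psi(y)$ almost everywhere. Since $W$ is continuous and finite, it is bounded on any bounded subset of $\MM^{m\times N}$, so dominated convergence yields $\int_Y W(F+\nabla\psi_n)\,dy \to \int_Y W(F+\nabla\psi)\,dy$. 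Passing to the limit gives $\Z W(F) \leq \int_Y W(F+\nabla\psi)\,dy$; infimizing over $\psi$ delivers (a).

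For (b), I would verify the quasiconvexity inequality $\Z_\infty W(F) \leq \int_Y \Z_\infty W(F+\nabla\varphi(y))\,dy$ for all $F$ and $\varphi \in W^{1,\infty}_0(Y;\RR^m)$. Using (a) to reduce to piecewise-affine $\varphi$, so that $\nabla\varphi$ takes finitely many values $A_1,\dots,A_k$ on a partition $\{Y_i\}$ of $Y$, I would fix small $\eta>0$, tile the interior of each $Y_i$ by translates $y_{i,j}+\eta Y$ of a cube of side $\eta$, pick near-minimizers $\psi_i \in W^{1,\infty}_0(Y;\RR^m)$ for $\Z_\infty W(F+A_i)$, and define the competitor $\Psi(y):=\varphi(y)+\eta\psi_i((y-y_{i,j})/\eta)$ on each small cube and $\Psi:=\varphi$ elsewhere. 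Each rescaled $\psi_i$ vanishes on the boundary of its cube, so $\Psi \in W^{1,\infty}_0(Y;\RR^m)$. A change of variables decouples the integrals on the filled region into copies of $\int_Y W(F+A_i+\nabla\psi_i)\,dy \leq |Y_i|(\Z_\infty W(F+A_i)+\eps)$ (up to the ratio of filled to total measure), while the residual transition layers near $\partial Y_i$ have total measure $o(1)$ as $\eta \to 0$. Sending $\eta\to 0$ and then $\eps\to 0$ yields the quasiconvexity inequality, proving (b).

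The main obstacle lies in the patching in (b): one must keep the global competitor in $W^{1,\infty}_0(Y;\RR^m)$ while simultaneously controlling its gradient to remain in a fixed compact set of $\MM^{m\times N}$ and making the transition layers negligible in energy. The hypothesis that $W$ is continuous and finite enters essentially here — it is what makes $W$ bounded on the compact gradient set where the competitors take values, so that the transition-layer contribution is indeed $o(1)$ and not, say, instantly $+\infty$. Relaxing this finiteness hypothesis is precisely what forces the more delicate treatments ($\Z_k W$, ample energy densities, etc.) developed in the rest of Section~2.
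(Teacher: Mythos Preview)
The paper does not supply a proof of this theorem: it is quoted from Dacorogna~(1982) as a known result and then used as a black box in the proofs of Theorems~\ref{RTnon-finite-case} and~\ref{IRTnon-finite-case}. So there is no in-paper proof to compare against. Your outline is essentially the standard argument one finds in the literature.

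That said, one step deserves a closer look. In~(b) you write ``Using (a) to reduce to piecewise-affine $\varphi$''. But step~(a) was a dominated-convergence argument for integrals of $W$; to justify the analogous reduction for the quasiconvexity inequality of $\Z_\infty W$ you must pass to the limit in $\int_Y \Z_\infty W(F+\nabla\varphi_n)\,dy$, and that requires continuity of $\Z_\infty W$, not of $W$. This continuity does hold --- $\Z_\infty W$ is finite (being bounded above by $W$), and a short lamination argument shows it is rank-one convex, hence separately convex, hence locally Lipschitz; this is exactly the content of Lemma~\ref{FonsecaTheorem}(a)--(b) in the paper --- but it has to be established independently of~(a) before your reduction is legitimate. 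Once that is in place, your tiling construction is correct and the proof closes.
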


\begin{theorem}[Integral representation of $\overline{I}$ \cite{dacorogna82}]\label{DacorognaTheorem2}
If $W$ is continuous and
$$
\exists c>0\ \ \forall F\in\MM^{m\times N}\ \ W(F)\leq c(1+|F|^p)
$$
then 
$$
\forall\phi\in W^{1,p}(\Omega;\RR^m)\quad \displaystyle\overline{I}(\phi)=\int_\Omega \Q W(\nabla\phi(x))dx.
$$
\end{theorem}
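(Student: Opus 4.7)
The plan is to prove the two inequalities $\overline{I}(\phi) \geq \int_\Omega \Q W(\nabla\phi)\,dx$ and $\overline{I}(\phi) \leq \int_\Omega \Q W(\nabla\phi)\,dx$ separately, leveraging Theorem \ref{DacorognaTheorem1} to identify $\Q W$ with the cell formula $\Z_\infty W$.

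For the lower bound, I fix any sequence $\phi_n \to \phi$ in $L^p$ with $\liminf_n I(\phi_n) < +\infty$. The $p$-coercivity $W(F) \geq C|F|^p$ assumed at the outset, combined with $L^p$-boundedness of $\phi_n$, forces $\{\phi_n\}$ to be bounded in $W^{1,p}(\Omega;\RR^m)$, so up to a subsequence $\phi_n \rightharpoonup \phi$ weakly in $W^{1,p}$. Since $\Q W \leq W$, I have $\int_\Omega \Q W(\nabla\phi_n)\,dx \leq I(\phi_n)$. By Theorem \ref{DacorognaTheorem1} the envelope $\Q W$ is quasiconvex, and from $0 \leq \Q W \leq W \leq c(1+|\cdot|^p)$ it is continuous with $p$-growth; the classical Morrey/Acerbi--Fusco weak lower semicontinuity theorem therefore applies to $\psi \mapsto \int_\Omega \Q W(\nabla\psi)\,dx$, giving $\int_\Omega \Q W(\nabla\phi)\,dx \leq \liminf_n \int_\Omega \Q W(\nabla\phi_n)\,dx \leq \liminf_n I(\phi_n)$. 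Taking the infimum over admissible sequences yields the lower bound.

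For the upper bound I would build a recovery sequence in three stages. First, for an affine target $\phi(x) = Fx + b$, Theorem \ref{DacorognaTheorem1} gives $\Q W(F) = \Z_\infty W(F)$, so for each $\eps > 0$ there is $\varphi_\eps \in W^{1,\infty}_0(Y;\RR^m)$ with $\int_Y W(F+\nabla\varphi_\eps)\,dy \leq \Q W(F) + \eps$; extending $\varphi_\eps$ by zero to a $Y$-periodic map $\tilde\varphi_\eps$ on $\RR^N$ and setting $\phi_{\eps,n}(x) := Fx + b + \tfrac{1}{n}\tilde\varphi_\eps(nx)$, one obtains $\phi_{\eps,n}\to\phi$ uniformly and the standard periodic-homogenization identity $\lim_n I(\phi_{\eps,n}) = |\Omega|\int_Y W(F+\nabla\varphi_\eps)\,dy \leq |\Omega|(\Q W(F)+\eps)$, after which a diagonal in $\eps$ closes the affine case. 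Second, for a continuous piecewise affine $\phi$ with pieces $\Omega_i$, I apply the first stage on each $\Omega_i$ modulated by a smooth cutoff supported strictly inside $\Omega_i$ so that the local perturbations vanish near every interface and glue into a global $W^{1,p}$ map. Third, for a general $\phi \in W^{1,p}(\Omega;\RR^m)$, I approximate it strongly in $W^{1,p}$ by piecewise affine functions, apply the second stage to each approximation and diagonalize, using that $\psi \mapsto \int_\Omega \Q W(\nabla\psi)\,dx$ is continuous under strong $W^{1,p}$ convergence thanks to the continuity and $p$-growth of $\Q W$.

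The hard part will be the gluing in the second stage: the cutoff introduces a thin interface layer on which the gradient is distorted, and the only available tool to absorb the resulting energy error is the $p$-growth upper bound $W(F) \leq c(1+|F|^p)$ together with the $L^\infty$-smallness of $\tfrac{1}{n}\tilde\varphi_\eps$ that comes from choosing the corrector in $W^{1,\infty}_0$ rather than merely $W^{1,p}_0$. This is precisely why the theorem does not extend directly to densities satisfying only $p$-coercivity, and why the hyperelastic strong-determinant constraint (\ref{s-DC}), which forces $W = +\infty$ on a thick set, requires the refined techniques discussed later in the paper.
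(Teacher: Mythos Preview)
The paper does not supply its own proof of Theorem~\ref{DacorognaTheorem2}; it is quoted from \cite{dacorogna82} and used as a black box in the proofs of Theorems~\ref{RTnon-finite-case} and~\ref{IRTnon-finite-case}. Your outline is a correct version of the classical argument.

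One comparison is nonetheless instructive. For your Stage~2 (piecewise affine targets) you glue via a cutoff and rely on the $p$-growth of $W$ to absorb the interface error; note that this requires an extra limit shrinking the layer, since outside the cutoff support the integrand is $W(F_i)$, not $\Q W(F_i)$. The paper, in proving the analogous step for its extension theorems (Lemmas~\ref{R-Lemma1} and~\ref{R-Lemma2}, \S2.8.1), avoids cutoffs altogether: it Vitali-covers each piece $V_i$ by small homothetic copies $a_{i,j}+\alpha_{i,j}Y$ of the unit cube and places a rescaled copy of the corrector $\varphi_i\in W^{1,\infty}_0(Y;\RR^m)$ in each. Because $\varphi_i$ vanishes on $\partial Y$, the perturbations glue automatically across all interfaces with \emph{no} error term --- one has the exact identity $\int_\Omega W(\nabla\phi+\nabla\phi_n)\,dx=\sum_i|V_i|\int_Y W(F_i+\nabla\varphi_i)\,dy$ for every $n$, and the $p$-growth of $W$ is not used in this step at all. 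This is precisely what allows the paper to recycle the argument verbatim when $W$ is allowed to take the value $+\infty$, which your cutoff approach cannot do.
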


\subsection{Representation of $\Q W$: non-finite case} Theorem \ref{DacorognaTheorem1} can be extended as follows.

\begin{theorem}[\cite{oah-jpm07,oah-jpm08b}]\label{RTnon-finite-case}\ \hskip150mm
\begin{itemize}
\item[(i)] If $\Z_\infty W$ is finite then $\Q W=\Z_\infty W$.
\item[(ii)] If $\Z W$ is finite then $\Q W=\Z W=\Z_\infty W$.

\end{itemize}
\end{theorem}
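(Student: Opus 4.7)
The plan is to reduce each assertion to the quasiconvexity of the auxiliary envelope itself: once we know that $\Z_\infty W$ (respectively $\Z W$) is quasiconvex under the stated finiteness hypothesis, the Remark supplies $\Z_\infty W\le W$ (respectively $\Z W\le W$), and the maximality of $\Q W$ among quasiconvex functions majorized by $W$ yields $\Z_\infty W\le \Q W$ (respectively $\Z W\le \Q W$); combined with the reverse inequality from the Remark, this gives the equalities. For (ii), finiteness of $\Z W$ already forces finiteness of $\Z_\infty W$ through the Remark, so that (i) applies and delivers $\Q W=\Z_\infty W$; the extra content of (ii) is then the quasiconvexity of $\Z W$.

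To prove quasiconvexity of $\Z_\infty W$, fix $F\in\MM^{m\times N}$, $\psi\in W^{1,\infty}_0(Y;\RR^m)$ and $\varepsilon>0$. I would construct a competitor $\Phi\in W^{1,\infty}_0(Y;\RR^m)$ with
$$\int_Y W\bigl(F+\nabla\Phi(y)\bigr)\,dy \;\le\; \int_Y \Z_\infty W\bigl(F+\nabla\psi(y)\bigr)\,dy + \varepsilon.$$
First, approximate $\psi$ in $W^{1,p}$ by a piecewise affine $\tilde\psi\in\Aff_0(Y;\RR^m)$ whose gradient is a simple function $\sum_j F_j\mathbf{1}_{T_j}$, the $T_j$ forming a partition of $Y$ up to null sets. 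Next, for each $j$, the finiteness of $\Z_\infty W(F+F_j)$ lets me choose $\varphi_j\in W^{1,\infty}_0(Y;\RR^m)$ such that $\int_Y W(F+F_j+\nabla\varphi_j(z))\,dz\le \Z_\infty W(F+F_j)+\varepsilon$. Finally, apply a Vitali covering theorem to exhaust each $T_j$ by disjoint translated and rescaled copies $y_{j,k}+\delta_{j,k}Y$, and set
$$\Phi(x):=\tilde\psi(x)+\delta_{j,k}\,\varphi_j\!\Bigl(\tfrac{x-y_{j,k}}{\delta_{j,k}}\Bigr)\quad\text{on } y_{j,k}+\delta_{j,k}Y.$$
Then $\Phi\in W^{1,\infty}_0(Y;\RR^m)$ because $\tilde\psi=0$ on $\partial Y$ and each $\varphi_j$ vanishes on the boundary of the rescaled cube, and on each small cube the gradient equals $F_j+\nabla\varphi_j\bigl(\tfrac{x-y_{j,k}}{\delta_{j,k}}\bigr)$; a change of variables and summation yield
$$\int_Y W(F+\nabla\Phi)\,dy \;\le\; \int_Y \Z_\infty W(F+\nabla\tilde\psi)\,dy + \varepsilon|Y|.$$

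The main obstacle is the passage to the limit as $\tilde\psi\to\psi$: one needs enough regularity of $G\mapsto\int_Y\Z_\infty W(F+G(y))\,dy$ under $L^p$-perturbations of $G$ bounded in $L^\infty$, to conclude that the right-hand side above converges to $\int_Y\Z_\infty W(F+\nabla\psi)\,dy$. I would handle this by choosing the piecewise affine approximations of Lusin type, so that $\tilde\psi=\psi$ outside a set of arbitrarily small measure while $\|\nabla\tilde\psi\|_\infty$ stays controlled by $\|\nabla\psi\|_\infty$; dominated convergence then suffices provided $\Z_\infty W$ is locally bounded on bounded sets, which must be established independently of the quasiconvexity conclusion itself, for instance from the $p$-coercivity of $W$ via a direct convexification estimate or by truncating $W_n:=W\wedge n$, applying Theorem \ref{DacorognaTheorem1} to $W_n$ and passing to the limit $n\to\infty$. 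For (ii), the parallel construction with perturbations $\varphi_j\in\Aff_0(Y;\RR^m)$ (available because $\Z W(F+F_j)<+\infty$) and $\tilde\psi\in\Aff_0$ produces $\Phi\in\Aff_0(Y;\RR^m)$, which bounds the infimum defining $\Z W(F)$ and so proves that $\Z W$ is quasiconvex.
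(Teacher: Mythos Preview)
Your Vitali construction is correct and is in fact the engine behind Lemma \ref{FonsecaTheorem}(c): what you establish, before taking any limit in $\tilde\psi$, is precisely the inequality $\Z_\infty W\le\Z\Z_\infty W$ (and, in the parallel argument for (ii), $\Z W\le\Z\Z W$). The divergence from the paper is in how one passes from piecewise-affine test functions to arbitrary $\psi\in W^{1,\infty}_0$. The paper does not attempt this passage by hand; instead it invokes Fonseca's result that a finite $\Z_\infty W$ is automatically continuous (Lemma \ref{FonsecaTheorem}(b)), which then permits Dacorogna's Theorem \ref{DacorognaTheorem1} to be applied to the finite continuous integrand $\Z_\infty W$ itself, yielding $\Q\Z_\infty W=\Z\Z_\infty W$. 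Combined with $\Z_\infty W\le\Z\Z_\infty W$ and the trivial identity $\Q\Z_\infty W=\Q W$, this closes the argument in two lines.

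Your route tries to prove quasiconvexity of $\Z_\infty W$ directly, and you correctly isolate the obstruction: the limit $\tilde\psi\to\psi$ requires local boundedness of $\Z_\infty W$ on bounded sets. The fixes you propose do not close this gap. The truncation $W_n:=W\wedge n$ is only Borel measurable (the paper assumes no continuity of $W$), so Theorem \ref{DacorognaTheorem1} does not apply to $W_n$; and even granting that, the passage $\Z_\infty W_n\to\Z_\infty W$ is delicate because $\Z_\infty$ is an infimum, not a supremum. The ``direct convexification estimate from $p$-coercivity'' is left unspecified and there is no obvious such estimate. What you actually need is exactly Lemma \ref{FonsecaTheorem}(a)--(b): finiteness of $\Z_\infty W$ forces rank-one convexity, hence separate convexity, hence local Lipschitz continuity, hence local boundedness. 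Once you cite that, your Lusin-type argument goes through---but at that point you have reassembled the paper's proof with the ingredients in a less economical order, and the bootstrapping through Theorem \ref{DacorognaTheorem1} is shorter and avoids the approximation step entirely.
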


\begin{proof}
We need (the two last assertions, the first one being used at the end of \S 2.4, of)  the following result.
\begin{lemma}[\cite{fonseca88}]\label{FonsecaTheorem}\ \hskip150mm
\begin{itemize}
\item[(a)] If $\Z_{\infty} W$ (resp. $\Z W$)  is finite then $\Z_{\infty} W$ (resp. $\Z W$)  is rank-one convex. 
\item[(b)] If $\Z_{\infty} W$ (resp. $\Z W$) is finite then $\Z_{\infty} W$ (resp. $\Z W$) is continuous.
\item[(c)] $\Z_\infty W\leq \Z\Z_\infty W$ and $\Z \Z W=\Z W$.
\end{itemize}
\end{lemma}
One always has $W\geq\Z W\geq \Z_\infty W\geq\Q W$. Hence:
\begin{itemize}
\item[(j)] $\Q\Z_\infty W=\Q W\leq\Z_\infty W$;
\item[(jj)] $\Q\Z W=\Q\Z_\infty W=\Q W$.
\end{itemize}

(i) If $\Z_\infty W$ is finite then $\Z_\infty W$ is continuous by Lemma \ref{FonsecaTheorem}(b). From Theorem \ref{DacorognaTheorem1} it follows that $\Q\Z_\infty W=\Z\Z_\infty W$. But $\Z_\infty W\leq\Z\Z_\infty W$ by Lemma \ref{FonsecaTheorem}(c), and so $\Q W=\Z_\infty W$ by using (j).

\medskip

(ii) If $\Z W$ is finite then also is $\Z_\infty W$. Hence $\Q W=\Z_\infty W$ by the previous reasoning. On the other hand, $\Z W$ is continuous by Lemma \ref{FonsecaTheorem}(b). From Theorem \ref{DacorognaTheorem1} it follows that $\Q\Z W=\Z\Z W$. But $\Z\Z W=\Z W$ by Lemma \ref{FonsecaTheorem}(c), and so $\Q W=\Z W$ by using (jj).
\end{proof}

\begin{question}
Prove (or disprove) that if $\Z_\infty W$ is finite, also is $\Z W$.
\end{question}

\subsection{Representation of $\overline{I}$: non-finite case} Theorem \ref{DacorognaTheorem2} can be extended as follows.

\begin{theorem}[\cite{oah-jpm07,oah-jpm08b}]\label{IRTnon-finite-case}\ \hskip150mm
\begin{itemize}
\item[(i)] If 
$
\exists c>0\ \ \forall F\in\MM^{m\times N}\ \ \Z_\infty W(F)\leq c(1+|F|^p)
$
then
$$
\forall \phi\in W^{1,p}(\Omega;\RR^m)\ \ \displaystyle\overline{I}(\phi)=\int_\Omega \Q W(\nabla\phi(x))dx.
$$
\item[(ii)] If 
$
\exists c>0\ \ \forall F\in\MM^{m\times N}\ \ \Z  W(F)\leq c(1+|F|^p)
$
then
$$
\forall \phi\in W^{1,p}(\Omega;\RR^m)\ \ \displaystyle\overline{I}(\phi)=\overline{I}_{\rm aff}(\phi)=\int_\Omega \Q W(\nabla\phi(x))dx
$$
with $\overline{I}_{\rm aff}:W^{1,p}(\Omega;\RR^m)\to[0,+\infty]$ defined by
$$
\overline{I}_{\rm aff}(\phi):=\inf\left\{\liminf_{n\to+\infty}I(\phi_n):\Aff(\Omega;\RR^m)\ni\phi_n\stackrel{L^p}{\to}\phi\right\}.
$$
\end{itemize}
\end{theorem}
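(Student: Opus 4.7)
The strategy is to combine Theorem \ref{RTnon-finite-case} with the classical Dacorogna theorem, and to carry out a recovery-sequence construction based on the cell formula. Under either hypothesis the cell envelope $\Z_\infty W$ (and in case (ii) also $\Z W$) is finite, so Theorem \ref{RTnon-finite-case} identifies $\Q W$ with $\Z_\infty W$ (respectively with both $\Z W$ and $\Z_\infty W$). By Lemma \ref{FonsecaTheorem}(b) the envelope $\Q W$ is then continuous, and by hypothesis $0\le \Q W(F)\le c(1+|F|^p)$. Thus $\Q W$ itself qualifies for Theorem \ref{DacorognaTheorem2}, and one concludes that the functional $J(\phi):=\int_\Omega \Q W(\nabla\phi)dx$ is sequentially $L^p$-lower semicontinuous on $W^{1,p}(\Omega;\RR^m)$.

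The lower bound $\overline{I}\ge J$ is then direct: if $\phi_n\stackrel{L^p}{\to}\phi$ with $\liminf I(\phi_n)<\infty$, the $p$-coercivity of $W$ gives a weakly $W^{1,p}$-convergent subsequence to $\phi$, so $\liminf I(\phi_n)\ge\liminf J(\phi_n)\ge J(\phi)$ by $W\ge \Q W$ and weak lower semicontinuity of $J$.

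For the upper bound I would proceed in three stages. (1) For an affine map $\phi(x)=Fx+b$ and $\eps>0$, pick $\varphi\in W^{1,\infty}_0(Y;\RR^m)$ (resp.\ $\varphi\in\Aff_0(Y;\RR^m)$ in case (ii)) nearly realizing the cell infimum for $\Q W(F)$; cover $\Omega$ (up to a null set) by a Vitali family of disjoint translated and scaled cubes $x^i + r^i Y$ with $r^i\le 1/n$, and set $\phi_n(x):=Fx+b+r^i\varphi((x-x^i)/r^i)$ on each such cube; since $\varphi$ vanishes on $\partial Y$, $\phi_n$ is continuous and coincides with $Fx+b$ on each cube boundary, $\phi_n\to\phi$ uniformly, and the change of variables yields $I(\phi_n)=|\Omega|\int_Y W(F+\nabla\varphi)\le |\Omega|(\Q W(F)+\eps)$. (2) For a continuous piecewise affine $\phi$, apply stage (1) separately on each piece $\Omega_j$; the vanishing boundary values of the cell correctors again guarantee global continuity across inter-piece interfaces, so $\phi_n\in W^{1,\infty}(\Omega;\RR^m)$ (resp.\ $\phi_n\in\Aff(\Omega;\RR^m)$ in case (ii)). (3) For an arbitrary $\phi\in W^{1,p}$, approximate $\phi$ by piecewise affine $\phi_k\to\phi$ in $W^{1,p}$ norm; continuity together with the $p$-growth of $\Q W$ give $J(\phi_k)\to J(\phi)$ by dominated convergence, and a diagonal extraction, together with $L^p$-lower semicontinuity of the relaxed functional $\overline{I}$ (resp.\ $\overline{I}_{\rm aff}$), yields $\overline{I}(\phi)\le J(\phi)$ in case (i) and $\overline{I}_{\rm aff}(\phi)\le J(\phi)$ in case (ii).

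The main obstacle is keeping track of the non-finite values of $W$ throughout stage (1). Since $W$ may blow up, the composition $W(F+\nabla\varphi(\cdot))$ is only in $L^1(Y)$, not in $L^\infty(Y)$; however this does not affect the exact-scaling change of variables above, which is why the Vitali-cube construction is preferable to a periodic rescaling argument that would require Riemann--Lebesgue averaging in $L^1$. A secondary delicate point in case (ii) is keeping the recovery sequence piecewise affine end-to-end, which forces the use of $\Z W$ and $\Aff_0(Y;\RR^m)$ at stage (1) and a careful arrangement in stage (3) so that the diagonal extraction still produces elements of $\Aff(\Omega;\RR^m)$.
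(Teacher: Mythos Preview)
Your proposal is correct and follows essentially the same route as the paper. The paper organizes the argument around the auxiliary functional $\Z_\infty I(\phi)=\int_\Omega \Z_\infty W(\nabla\phi)\,dx$ (resp.\ $\Z I$), applies Dacorogna's theorem to it, and proves the key inequality $\overline{I}\le\overline{\Z_\infty I}_{\rm aff}$ (resp.\ $\overline{I}_{\rm aff}\le\overline{\Z I}_{\rm aff}$) via the same Vitali construction you describe in stages~(1)--(2); your reorganization, invoking Theorem~\ref{RTnon-finite-case} first so as to work directly with $\Q W$, is equivalent since $\Q W=\Z_\infty W$ under the hypothesis. One small remark: in stage~(3) ``dominated convergence'' should strictly be Vitali's theorem (or the generalized dominated convergence with moving majorants $c(1+|\nabla\phi_k|^p)$ converging in $L^1$), but this is a standard Nemytskii argument and the paper glosses over the same point.
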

\begin{proof} (i) Let $\Z_\infty I, \overline{\Z_\infty I}, \overline{\Z_\infty I}_{\rm aff}:W^{1,p}(\Omega;\RR^m)\to[0,+\infty]$ be respectively defined by:
\begin{itemize}
\item[\SMALL$\blacklozenge$] $\displaystyle\Z_\infty I(\phi):=\int_\Omega\Z_\infty W(\nabla\phi(x))dx$;
\item[\SMALL$\blacklozenge$] $\displaystyle\overline{\Z_\infty I}(\phi):=\inf\left\{\liminf_{n\to+\infty}\Z_\infty I(\phi_n):\phi_n\stackrel{L^p}{\to}\phi\right\}$;
\item[\SMALL$\blacklozenge$] $\displaystyle\overline{\Z_\infty I}_{\rm aff}(\phi):=\inf\left\{\liminf_{n\to+\infty}\Z_\infty I(\phi_n):\Aff(\Omega;\RR^m)\ni\phi_n\stackrel{L^p}{\to}\phi\right\}.$ 
\end{itemize}
Since $\Z_\infty W$ is of $p$-polynomial growth, i.e., $\exists c>0\ \forall F\in\MM^{m\times N}\ \Z_\infty W(F)\leq c(1+|F|^p)$, it follows that $\Z_\infty W$ is (finite and so) continuous by Lemma \ref{FonsecaTheorem}(b). By Theorem \ref{DacorognaTheorem2} we deduce that
$$
\forall\phi\in W^{1,p}(\Omega;\RR^m)\ \ \overline{\Z_\infty I}(\phi)=\int_\Omega \Q\Z_\infty W(\nabla\phi(x))dx.
$$
But one always has $\Q\Z_\infty W=\Q W$, hence
$$
\forall\phi\in W^{1,p}(\Omega;\RR^m)\ \ \overline{\Z_\infty I}(\phi)=\int_\Omega \Q W(\nabla\phi(x))dx.
$$
Thus, it suffices to prove that $\overline{I}\leq\overline{\Z_\infty I}$ (the reverse inequality being trivially true). The key point of the proof is that we can establish (by using the Vitali covering theorem and without assuming that $\Z_\infty W$ is of $p$-polynomial growth) the following lemma (whose proof is given in \S 2.8.1).

\begin{lemma}\label{R-Lemma1}
$\overline{I}\leq\overline{\Z_\infty I}_{\rm aff}$.
\end{lemma}
On the other hand, as $\Z_\infty W$ is of $p$-polynomial growth and $\Aff(\Omega;\RR^m)$ is strongly dense in $W^{1,p}(\Omega;\RR^m)$, it is easy to see that $\overline{\Z_\infty I}_{\rm aff}=\overline{\Z_\infty I}$, and the result follows.

\medskip

(ii) Let $\Z I, \overline{\Z_\infty I}, \overline{\Z_\infty I}_{\rm aff}:W^{1,p}(\Omega;\RR^m)\to[0,+\infty]$ be respectively defined by:
\begin{itemize}
\item[\SMALL$\blacklozenge$] $\displaystyle\Z I(\phi):=\int_\Omega\Z W(\nabla\phi(x))dx$;
\item[\SMALL$\blacklozenge$] $\displaystyle\overline{\Z I}(\phi):=\inf\left\{\liminf_{n\to+\infty}\Z I(\phi_n):\phi_n\stackrel{L^p}{\to}\phi\right\}$;
\item[\SMALL$\blacklozenge$] $\displaystyle\overline{\Z I}_{\rm aff}(\phi):=\inf\left\{\liminf_{n\to+\infty}\Z I(\phi_n):\Aff(\Omega;\RR^m)\ni\phi_n\stackrel{L^p}{\to}\phi\right\}.$ 
\end{itemize}
As $\Z W$ is of $p$-polynomial growth and (so) continuous (by Lemma \ref{FonsecaTheorem}(b)), from Theorem \ref{DacorognaTheorem2} (and since $\Q\Z W=\Q W$ is always true) we deduce that
$$
\forall\phi\in W^{1,p}(\Omega;\RR^m)\ \ \overline{\Z I}(\phi)=\int_\Omega \Q W(\nabla\phi(x))dx.
$$
It is then sufficient to prove that $\overline{I}_{\rm aff}\leq\overline{\Z I}$ (the inequalities $\overline{I}\leq\overline{I}_{\rm aff}$ and $\overline{\Z I}\leq\overline{I}$ being trivially true). As above, the key point of the proof is that we can establish (by using the Vitali covering theorem and without assuming that $\Z W$ is of $p$-polynomial growth) the following lemma (whose proof is given in \S 2.8.1).
\begin{lemma}\label{R-Lemma2}
$\overline{I}_{\rm aff}\leq\overline{\Z I}_{\rm aff}$.
\end{lemma}
On the other hand, as $\Z W$ is of $p$-polynomial growth and $\Aff(\Omega;\RR^m)$ is strongly dense in $W^{1,p}(\Omega;\RR^m)$, it is clear that $\overline{\Z I}_{\rm aff}=\overline{\Z I}$, and the result follows.
\end{proof}

\medskip

We see here that the integrands $W$ for which $\Z_\infty W$ or $\Z W$ is of $p$-polynomial have a ``nice" behavior with respect to Problem \ref{problem1}. So, it could be interesting to introduce a new class of integrands (that we will call the class of $p$-ample integrands) as follows.
\begin{definition}\label{Ample-Integrands}
We say that $W$ is $p$-ample if and only if $\Z_\infty W$ is of $p$-polynomial growth, i.e., $\exists c>0\ \ \forall F\in\MM^{m\times N}\ \ \Z_\infty W(F)\leq c(1+|F|^p)$.
\end{definition}
We use the term ``$p$-ample" because of some analogies with the concept (developed in differential geometry by Gromov) of amplitude of a differential relation (see \cite{gromov86} for more details). Thus, Theorems \ref{RTnon-finite-case} and \ref{IRTnon-finite-case} can be summarized as follows. 

\begin{theorem}\label{TheoremA-B}
If $W$ is $p$-ample then 
$$
\forall\phi\in W^{1,p}(\Omega;\RR^m)\ \ \overline{I}(\phi)=\int_\Omega\Q W(\nabla\phi(x))dx\hbox{ and $\Q W=\Z_\infty W$.}
$$ 
\end{theorem}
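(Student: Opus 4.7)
The plan is to observe that Theorem \ref{TheoremA-B} is a direct consolidation, under the single $p$-ample hypothesis, of the two non-finite-case results already established (Theorems \ref{RTnon-finite-case}(i) and \ref{IRTnon-finite-case}(i)); consequently no new argument is needed and the proof reduces to checking that the $p$-ample assumption simultaneously triggers both previous theorems.

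First I would unpack Definition \ref{Ample-Integrands}: $W$ being $p$-ample means there exists $c>0$ such that $\Z_\infty W(F)\leq c(1+|F|^p)$ for all $F\in\MM^{m\times N}$. In particular $\Z_\infty W$ is finite everywhere, so Theorem \ref{RTnon-finite-case}(i) applies and yields the identification $\Q W=\Z_\infty W$ at once. Next, the $p$-polynomial growth bound is precisely the hypothesis of Theorem \ref{IRTnon-finite-case}(i), and this delivers
$$
\overline{I}(\phi)=\int_\Omega \Q W(\nabla\phi(x))dx \quad \text{for every } \phi\in W^{1,p}(\Omega;\RR^m).
$$
Combining the two gives both conclusions simultaneously.

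There is no genuine obstacle at this level: the notion of $p$-ampleness is engineered so that it bundles the hypotheses of the two earlier theorems into a single condition, and the substantive content—rank-one convexity and continuity of $\Z_\infty W$ via Lemma \ref{FonsecaTheorem}, the Vitali-covering reduction of $\overline{I}$ to $\overline{\Z_\infty I}$ encoded in Lemma \ref{R-Lemma1}, and the application of Dacorogna's Theorem \ref{DacorognaTheorem2} to the (now continuous, $p$-growth) function $\Z_\infty W$—has already been carried out upstream. Thus Theorem \ref{TheoremA-B} is essentially a packaging statement whose proof is a one-line invocation of the two results just proved.
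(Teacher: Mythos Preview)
Your proposal is correct and matches the paper's treatment exactly: the paper presents Theorem \ref{TheoremA-B} explicitly as a summary of Theorems \ref{RTnon-finite-case} and \ref{IRTnon-finite-case} (``Thus, Theorems \ref{RTnon-finite-case} and \ref{IRTnon-finite-case} can be summarized as follows'') and gives no separate proof. Your unpacking of the $p$-ample definition to trigger both (i)-parts is precisely the intended argument.
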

\begin{question}
Prove (or disprove) that $W$ is $p$-ample if and only if $\Q W$ is of $p$-polynomial growth.
\end{question}

An analogue result of Theorem \ref{IRTnon-finite-case} was proved by Ben Belgacem (who is in fact the first that obtained an integral representation for $\overline{I}$ in the non-finite case). Let  $\{\R_i W\}_{i\in\NN}$ be defined by $\R_0 W:=W$ and for each $i\in\NN^*$ and each $F\in\MM^{m\times N}$,
$$
\R_{i+1}W(F):=\infff\limits_{t\in[0,1]}\big\{(1-t)\R_i W(F-t a\otimes b)+t\R_i W(F+(1-t)a\otimes b)\big\}.
$$
By Kohn and Strang (see \cite{kohn-strang86}) we have $\R_{i+1}W\leq\R_i W$ for all $i\in\NN$ and $\R W=\inf_{i\geq 0}\R_i W$, where $\R W$ denotes the rank-one convex envelope of $W$. The Ben Belgacem theorem can be stated as follows.

\begin{theorem}[\cite{benbelgacem96,benbelgacem00}] \label{BenBelgacemTheorem} Assume that{\rm:}
\begin{itemize}
\item[(BB$_1$)] $\OO_W:={\rm int}\Big\{F\in\MM^{m\times N}:\forall i\in\NN\ \Z\R_i W(F)\leq\R_{i+1}W(F)\Big\}$ is dense in $\MM^{m\times N};$
\item[(BB$_2$)] $\forall i\in\NN^*\ $ $\forall F\in\MM^{m\times N}\ $ $\forall\{F_n\}_n\subset \OO_W$ 
$$F_n\to F\then \R_i W(F)\geq\limsup\limits_{n\to+\infty}\R_i W(F_n);$$
\item[\SMALL$\blacklozenge$] $\exists c>0\ \ \forall F\in\MM^{m\times N}\ \ \R W(F)\leq c(1+|F|^p)$.
\end{itemize}
Then
$$
\forall\phi\in W^{1,p}(\Omega;\RR^m)\quad \displaystyle\overline{I}(\phi)=\int_\Omega \Q\R W(\nabla\phi(x))dx.
$$
\end{theorem}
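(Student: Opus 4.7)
The plan is to establish separately the lower bound $\overline{I}(\phi)\geq\int_\Omega\Q\R W(\nabla\phi(x))dx$ and the matching upper bound. The lower bound is essentially standard: since $\R W\leq c(1+|F|^p)$ implies that $\Q\R W$ is of $p$-polynomial growth, the functional $\phi\mapsto\int_\Omega\Q\R W(\nabla\phi)dx$ is $L^p$-sequentially lower semicontinuous on bounded subsets of $W^{1,p}$ by Morrey's quasiconvexity; the $p$-coercivity of $W$ then confines bounded-energy sequences to bounded subsets of $W^{1,p}$, and the lower estimate follows. Note also that $\Q\R W=\Q W$, since both coincide with the greatest quasiconvex function dominated by $W$.

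For the upper bound I would reduce the problem to the $L^p$-relaxation of $\R I(\phi):=\int_\Omega\R W(\nabla\phi(x))dx$. Because $\R W$ is rank-one convex and finite (by the polynomial growth), it is continuous on $\MM^{m\times N}$, so Theorem \ref{DacorognaTheorem2} applies directly and gives
$$
\forall\phi\in W^{1,p}(\Omega;\RR^m)\quad \overline{\R I}(\phi)=\int_\Omega\Q\R W(\nabla\phi(x))dx.
$$
Hence it suffices to prove $\overline{I}\leq\overline{\R I}$. Using the density of $\Aff(\Omega;\RR^m)$ in $W^{1,p}(\Omega;\RR^m)$, the polynomial growth of $\R W$ and the lower semicontinuity of $\overline{I}$, I would further reduce this to showing $\overline{I}(u_F)\leq|\Omega|\R W(F)$ for every affine map $u_F(x):=Fx+b$.

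The core of the argument is an induction on $i\in\NN$ establishing
$$
\forall F\in\OO_W\quad \overline{I}(u_F)\leq|\Omega|\R_i W(F).
$$
The case $i=0$ is trivial since $\R_0 W=W$. For the step $i\Rightarrow i+1$, I would fix $F\in\OO_W$ and $\eps>0$. Hypothesis (BB$_1$) gives $\Z\R_i W(F)\leq\R_{i+1}W(F)$, so there exists $\varphi\in\Aff_0(Y;\RR^m)$ with $\int_Y\R_i W(F+\nabla\varphi(y))dy\leq\R_{i+1}W(F)+\eps$; using the density of $\OO_W$ and (BB$_2$) I would perturb $\varphi$ slightly so that each affine-piece gradient $G_j$ satisfies $F+G_j\in\OO_W$ at the cost of at most $\eps$ extra energy. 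A Vitali covering of $\Omega$ by disjoint scaled-translated copies of $Y$, in the spirit of Lemmas \ref{R-Lemma1} and \ref{R-Lemma2}, then allows me to glue recovery sequences for each $u_{F+G_j}$ (provided by the inductive hypothesis at level $i$) into a global sequence $\phi_n\stackrel{L^p}{\to}u_F$ with $\limsup_n I(\phi_n)\leq|\Omega|(\R_{i+1}W(F)+2\eps)$, which closes the induction upon letting $\eps\to 0$.

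Letting $i\to\infty$ and using $\R_i W\downarrow\R W$ (by \cite{kohn-strang86}) yields $\overline{I}(u_F)\leq|\Omega|\R W(F)$ for every $F\in\OO_W$, and the density of $\OO_W$ combined with (BB$_2$) and the lower semicontinuity of $\overline{I}$ extends this inequality to all $F\in\MM^{m\times N}$. The main obstacle I anticipate is precisely the inductive gluing step: one must simultaneously exploit (BB$_1$) to pass from $\R_i W$ to $\R_{i+1}W$ via $\Z$, use (BB$_2$) and the density of $\OO_W$ to bring the perturbation gradients into $\OO_W$ where the inductive hypothesis applies, and carry out the Vitali-based gluing so that boundary traces match and the resulting functions lie in $W^{1,p}(\Omega;\RR^m)$.
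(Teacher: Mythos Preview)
The paper does not give a proof of Theorem~\ref{BenBelgacemTheorem}; it is only stated with a citation to Ben Belgacem. The closest thing inside the paper is the proof of the implication $(P_i)\Rightarrow(P_{i+1})$ in \S 3.6.5, which carries out the same Kohn--Strang induction in the membrane setting, and this is the natural benchmark for your outline.

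Against that benchmark, your inductive step has a genuine gap. Your inductive hypothesis is formulated only for \emph{affine} maps: $\overline{I}(u_F)\leq|\Omega|\,\R_i W(F)$ for $F\in\OO_W$. From this you get, for each gradient value $F+G_j$, a recovery sequence $\phi_n^j\stackrel{L^p}{\to}u_{F+G_j}$ with the right energy limit but with \emph{no control whatsoever on the trace} of $\phi_n^j$ on the boundaries of the Vitali pieces. Patching such sequences into a global $W^{1,p}$ competitor requires a transition-layer/cut-off argument of De~Giorgi type, and that argument needs $p$-polynomial growth of $W$ itself to bound the energy in the transition layer---exactly the hypothesis that is missing here. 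The appeal to Lemmas~\ref{R-Lemma1} and~\ref{R-Lemma2} is misleading: those lemmas construct a \emph{single explicit} piecewise affine competitor with boundary values matched by design; they never glue together a priori unrelated $L^p$-convergent sequences.

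The fix, which is what Ben Belgacem actually does and what the paper does in \S 3.6.5, is to strengthen the inductive statement so that no gluing of recovery sequences is ever needed. One proves, by induction on $i$,
\[
(P_i):\qquad \overline{I}_{\rm aff}(\phi)\leq\int_\Omega \R_iW(\nabla\phi(x))\,dx\quad\hbox{for every }\phi\in\Aff(\Omega;\RR^m)\hbox{ with gradients in }\OO_W.
\]
For the step $(P_i)\Rightarrow(P_{i+1})$ one uses $(\mathrm{BB}_1)$ on each affine piece of $\phi$ to build, via an explicit saw-tooth construction as in \S 3.6.5, a \emph{single} function $\phi_\eps\in\Aff(\Omega;\RR^m)$ with $\phi_\eps=\phi$ on the boundaries of the pieces, $\phi_\eps\stackrel{L^p}{\to}\phi$, gradients perturbed into $\OO_W$ via $(\mathrm{BB}_2)$, and $\int_\Omega\R_iW(\nabla\phi_\eps)\,dx\to\int_\Omega\R_{i+1}W(\nabla\phi)\,dx$. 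One then applies $(P_i)$ directly to $\phi_\eps$ and invokes the $L^p$-lower semicontinuity of $\overline{I}_{\rm aff}$ to pass to the limit. The induction operates entirely at the level of explicit piecewise affine test functions; recovery sequences are never glued.
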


Generally speaking, as rank-one convexity and quasiconvexity do not coincide, Theorem \ref{IRTnon-finite-case} and Theorem \ref{BenBelgacemTheorem} are not identical. However, we have

\begin{lemma} 
If either $\Z_\infty W$ or $\Z W$ is finite then $\Q\R W=\Q W$.
\end{lemma}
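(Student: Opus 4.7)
The plan is to establish the two inequalities $\Q\R W\leq\Q W$ and $\Q W\leq\Q\R W$ separately, with the hypothesis intervening only to guarantee the finiteness of $\Q W$, via Theorem \ref{RTnon-finite-case}.

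The inequality $\Q\R W\leq\Q W$ is immediate and requires no hypothesis. Since $\R W$ is by construction the largest rank-one convex function lying below $W$, one has $\R W\leq W$; and the quasiconvex envelope operator $\Q$ is monotone (because $\Q f$ is, for any $f$, the supremum of the quasiconvex functions dominated by $f$). Applying monotonicity to $\R W\leq W$ gives $\Q\R W\leq\Q W$.

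For the reverse inequality $\Q W\leq\Q\R W$, I would proceed as follows. Since $\Q\R W$ is the largest quasiconvex function below $\R W$, it suffices to exhibit $\Q W$ itself as a quasiconvex function satisfying $\Q W\leq\R W$. Quasiconvexity of $\Q W$ is built in. The remaining comparison $\Q W\leq\R W$ will follow from the maximality defining $\R W$, since $\Q W\leq W$ always holds, as soon as we know that $\Q W$ is rank-one convex. This in turn follows from the classical fact that every \emph{finite} quasiconvex function is rank-one convex. The role of the hypothesis is precisely to provide this finiteness: Theorem \ref{RTnon-finite-case}(i) yields $\Q W=\Z_\infty W$ when $\Z_\infty W$ is finite, and Theorem \ref{RTnon-finite-case}(ii) yields $\Q W=\Z W=\Z_\infty W$ when $\Z W$ is finite; in either case $\Q W$ is finite on $\MM^{m\times N}$.

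The main, and essentially only, subtle point is to ensure that $\Q W$ is rank-one convex, so that its comparison with the rank-one convex envelope $\R W$ is legitimate. Without the hypothesis on $\Z W$ or $\Z_\infty W$, $\Q W$ could attain the value $+\infty$ and the implication ``quasiconvex $\Rightarrow$ rank-one convex'' would no longer be automatic, so the identity $\Q\R W=\Q W$ would not follow by this direct two-line argument.
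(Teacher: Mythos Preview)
Your argument is correct, and the overall shape---showing $\Q\R W\leq\Q W$ trivially from $\R W\leq W$, and then $\Q W\leq\Q\R W$ by exhibiting a rank-one convex minorant of $W$ whose quasiconvex envelope is $\Q W$---matches the paper. The difference lies in which rank-one convex minorant is used and how its rank-one convexity is obtained. You work with $\Q W$ itself: you first invoke Theorem~\ref{RTnon-finite-case} to identify $\Q W$ with the finite function $\Z_\infty W$, then appeal to the classical implication ``finite quasiconvex $\Rightarrow$ rank-one convex'' to conclude $\Q W\leq\R W$. The paper instead works directly with $\Z_\infty W$ (resp.\ $\Z W$): by Lemma~\ref{FonsecaTheorem}(a), finiteness of $\Z_\infty W$ already gives rank-one convexity of $\Z_\infty W$, so $\Z_\infty W\leq\R W\leq W$; applying $\Q$ and using the standing identity $\Q\Z_\infty W=\Q W$ finishes the argument. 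The paper's route is marginally more economical, since it bypasses Theorem~\ref{RTnon-finite-case} and needs only Fonseca's lemma; your route is equally valid but draws on the slightly heavier representation result.
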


\begin{proof}
If $\Z_\infty W$ (resp. $\Z W$) is finite then $\Z_\infty W$ (resp. $\Z W$) is rank-one convex by Lemma \ref{FonsecaTheorem}(a). Consequently $\Z_\infty W\leq\R W$ (resp. $\Z W\leq\R W$) (and Theorem \ref{IRTnon-finite-case-bis} below follows by applying Theorem \ref{IRTnon-finite-case}). Thus, we have $\Z_\infty W\leq\R W\leq W$ (resp. $\Z_\infty W\leq\R W\leq W$), hence $\Q\Z_\infty W\leq\Q\R W\leq \Q W$ (resp. $\Q\Z W\leq\Q\R W\leq \Q W$) and so $\Q\R W=\Q W$ since one always has $\Q\Z_\infty W=\Q W$ (resp. $\Q\Z W=\Q W$).
\end{proof}

\begin{theorem}\label{IRTnon-finite-case-bis}
Assume that $\exists c>0\ \forall F\in\MM^{m\times N}\ \R W(F)\leq c(1+|F|^p)$. Then{\rm :}
\begin{itemize}
\item[(i)] if $\Z_\infty W$ is finite then 
$$\displaystyle\forall \phi\in W^{1,p}(\Omega;\RR^m)\ \ \displaystyle\overline{I}(\phi)=\int_\Omega \Q W(\nabla\phi(x))dx;
$$ 
\item[(ii)] if $\Z W$ is finite then 
$$
\forall \phi\in W^{1,p}(\Omega;\RR^m)\ \ \displaystyle\overline{I}(\phi)=\overline{I}_{\rm aff}(\phi)=\int_\Omega \Q W(\nabla\phi(x))dx.
$$
\end{itemize}
\end{theorem}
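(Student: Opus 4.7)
The plan is to deduce this theorem as a corollary of Theorem \ref{IRTnon-finite-case} by showing that the polynomial growth hypothesis on $\R W$ transfers to $\Z_\infty W$ (respectively to $\Z W$) whenever the latter is finite. In fact, the preceding lemma already contains the key observation, so the proof will essentially consist in making this reduction explicit.

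First I would handle case (i). Assuming $\Z_\infty W$ is finite, Lemma \ref{FonsecaTheorem}(a) yields that $\Z_\infty W$ is rank-one convex. Since by definition $\R W$ is the greatest rank-one convex function pointwise below $W$, and since one always has $\Z_\infty W \leq W$, this gives the sandwich
$$
\Z_\infty W \leq \R W \leq c(1+|\cdot|^p)
$$
on $\MM^{m\times N}$. Hence $\Z_\infty W$ is itself of $p$-polynomial growth, and Theorem \ref{IRTnon-finite-case}(i) immediately yields
$$
\forall \phi\in W^{1,p}(\Omega;\RR^m)\quad \overline{I}(\phi)=\int_\Omega \Q W(\nabla\phi(x))\,dx.
$$

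Case (ii) is entirely analogous: assuming $\Z W$ finite, Lemma \ref{FonsecaTheorem}(a) gives that $\Z W$ is rank-one convex, so $\Z W \leq \R W \leq c(1+|\cdot|^p)$, which puts us in the setting of Theorem \ref{IRTnon-finite-case}(ii) and provides the additional identification $\overline{I}(\phi)=\overline{I}_{\rm aff}(\phi)$. There is no real obstacle here; the substantive content sits inside Theorem \ref{IRTnon-finite-case} (which already packages the Vitali-covering-based Lemmas \ref{R-Lemma1} and \ref{R-Lemma2} together with Dacorogna's Theorem \ref{DacorognaTheorem2}), and the only thing to verify is the growth transfer via rank-one convexity of $\Z_\infty W$ or $\Z W$. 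If anything deserves a cautionary word it is the step $\Z_\infty W \leq \R W$ (resp.\ $\Z W \leq \R W$): this uses only rank-one convexity together with the pointwise bound by $W$ and the maximality property defining $\R W$, but it is the place where the finiteness hypothesis is actually consumed.
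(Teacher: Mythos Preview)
Your proposal is correct and follows essentially the same route as the paper: the paper also observes (inside the proof of the preceding lemma) that finiteness of $\Z_\infty W$ (resp.\ $\Z W$) gives rank-one convexity via Lemma~\ref{FonsecaTheorem}(a), hence $\Z_\infty W\le\R W$ (resp.\ $\Z W\le\R W$), and then applies Theorem~\ref{IRTnon-finite-case}. Your write-up makes the reduction a bit more explicit, but the argument is the same.
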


\begin{question}
Prove (or disprove) that if {\rm(BB$_1$)} and {\rm(BB$_2$)} hold then $\Z W$ is finite.
\end{question}

\subsection{Application 1: ``non-zero-Cross Product Constraint"} Consider $W_0:\MM^{3\times 2}\to[0,+\infty]$ Borel measurable and $p$-coercive and the following condition 
\begin{equation}\label{P}
\exists\alpha,\beta>0\ \forall \xi=(\xi_1\mid \xi_2)\in\MM^{3\times 2}\ \big(|\xi_1\land \xi_2|\geq\alpha\then W_0(\xi)\leq\beta(1+|\xi|^p)\big)
\end{equation}
with $\xi_1\land\xi_2$ denoting the cross product of vectors $\xi_1,\xi_2\in\RR^3$. When $W_0$ satisfies (\ref{P}) it is compatible with the ``non-zero-Cross Product Constraint", i.e., with the following two conditions:
\begin{equation}\label{*-CPC}
\left\{\begin{array}{l}W_0(\xi_1\mid\xi_2)=+\infty\iff |\xi_1\land\xi_2|=0\\
W_0(\xi_1\mid\xi_2)\to+\infty\hbox{ as }|\xi_2\land\xi_2|\to 0.\end{array}\right.
\end{equation}
 The interest of considering (\ref{*-CPC}) comes from the 3d-2d problem (see \S 3): if $W$ is compatible with the ``strong-Determinant Constraint", i.e., (\ref{s-DC}), then $W_0$ given by $W_0(\xi):=\inf_{\zeta\in\RR^3} W(\xi\mid\zeta)$ is compatible with (\ref{*-CPC}). One can establish the following lemma (whose proof is given in \S 2.8.2) which roughly means that the ``non-zero Cross Product Constraint" is $p$-ample.
 \begin{lemma}[\cite{oah-jpm07,oah-jpm08a}]\label{R-Lemma3}
 If $W_0$ satisfies {\rm (\ref{P})} then $\Z W_0$ is of $p$-polynomial growth, i.e., $\exists c>0\ \forall \xi\in\MM^{3\times 2}\ \Z W_0(\xi)\leq c(1+|\xi|^p)$.
\end{lemma}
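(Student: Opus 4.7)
The plan is to exhibit, for each $\xi \in \MM^{3\times 2}$, an explicit $\varphi \in \Aff_0(Y; \RR^3)$ (with $Y = ]0,1[^2$) realizing a suitable rank-one laminate whose ``base'' matrices all lie in the region where (\ref{P}) applies, namely where the cross product of the columns has modulus at least $\alpha$. Once this is achieved, (\ref{P}) yields $W_0 \leq \beta(1 + |\cdot|^p)$ pointwise a.e.\ on the support of the laminate, and it remains only to control the base norms in terms of $|\xi|$.

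First, if $|\xi_1 \land \xi_2| \geq \alpha$ I just take $\varphi \equiv 0$ and conclude $\Z W_0(\xi) \leq W_0(\xi) \leq \beta(1 + |\xi|^p)$. Otherwise, swapping columns if necessary I assume $|\xi_2| \geq |\xi_1|$. In the generic regime $|\xi_2| \geq 1$ I pick a unit vector $v \in \RR^3$ perpendicular to both $\xi_1$ and $\xi_2$ -- such a $v$ exists since these two vectors span an at-most-$2$-dimensional subspace of $\RR^3$ -- and set $\lambda := \alpha/|\xi_2| \leq \alpha$, $\xi^\pm := (\xi_1 \pm \lambda v \mid \xi_2)$. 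The Lagrange identity applied with $v \perp \xi_1$ and $v \perp \xi_2$ gives $(v \land \xi_2) \cdot (\xi_1 \land \xi_2) = 0$, whence
$$|(\xi_1 \pm \lambda v) \land \xi_2|^2 \;=\; |\xi_1 \land \xi_2|^2 + \lambda^2 |\xi_2|^2 \;\geq\; \alpha^2.$$
Since $\xi^+ - \xi^- = 2\lambda\, v \otimes e_1$ is rank one and $|\xi^\pm| \leq |\xi|+\lambda \leq |\xi|+\alpha$, a standard piecewise-affine rank-one lamination construction yields $\varphi \in \Aff_0(Y;\RR^3)$ with $\int_Y W_0(\xi + \nabla\varphi) \leq \tfrac12 W_0(\xi^+) + \tfrac12 W_0(\xi^-) + o(1) \leq c(1 + |\xi|^p)$.

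In the complementary regime $|\xi_1|, |\xi_2| < 1$ (hence $|\xi| < \sqrt 2$) I would use a two-level lamination. Fix any $u \in \RR^3$ with $|u| = 2$, so that $|\xi_2 \pm u| \geq |u| - |\xi_2| > 1$, and split $\xi = \tfrac12 \eta^+ + \tfrac12 \eta^-$ with $\eta^\pm := (\xi_1 \mid \xi_2 \pm u)$ and rank-one increment $2u \otimes e_2$. Applying the previous-paragraph construction inside each of the two outer cells produces a nested laminate with four base matrices $\xi^{(\epsilon_1,\epsilon_2)}$, each with cross-product norm $\geq \alpha$ and $|\xi^{(\epsilon_1,\epsilon_2)}| \leq |\eta^{\epsilon_2}| + \alpha \leq \sqrt 2 + 2 + \alpha$. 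Since the inner laminate is slotted inside each cell of the outer one, the iterated construction still produces an element of $\Aff_0(Y; \RR^3)$, and I conclude $\Z W_0(\xi) \leq \beta(1 + (\sqrt 2 + 2 + \alpha)^p) \leq c(1 + |\xi|^p)$.

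The hard part is not the matrix estimate -- which is elementary once the direction $v \perp \xi_1, \xi_2$ is identified -- but the realization of the (possibly iterated) rank-one laminate by an $\Aff_0$-function: the boundary condition $\varphi = 0$ on $\partial Y$ precludes $\nabla\varphi$ from taking only the two laminate values, so the construction must also employ auxiliary ``frame'' gradients near $\partial Y$, and these must be chosen so that $\xi$ plus the frame-gradient remains in the good set of (\ref{P}) (otherwise $W_0$ could be $+\infty$ on a set of positive measure, spoiling the integral). The standard remedy is to exhaust $Y$ by disjoint affine copies of a small reference cell via a Vitali covering, to implement the laminate inside each cell with frame gradients of the form $M v \otimes e_1$ (for $M$ large, guaranteeing a large cross product and hence (\ref{P})-controlled $W_0$-cost), and then to pass to a diagonal argument letting the total frame measure shrink to zero; this is the essential content of \S 2.8.2.
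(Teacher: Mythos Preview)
Your matrix estimates are fine, but the realization step has a genuine gap. The claim that the frame gradients can be taken of the form $Mv\otimes e_1$ is false: a function whose gradient has vanishing second column depends only on $x_1$ and cannot satisfy $\varphi=0$ on the horizontal parts of $\partial Y$. Worse, the obstruction is intrinsic to the square. For \emph{any} $\varphi\in\Aff_0(Y;\RR^3)$ there is a triangle $T$ of the underlying partition with an edge on $\{x_2=0\}$; since $\varphi$ is affine on $T$ and vanishes on that edge, $\partial_1\varphi\equiv 0$ on $T$, so the first column of $\xi+\nabla\varphi$ equals $\xi_1$ there. Now take $\xi_1=0$, $|\xi_2|\ge 1$ (your ``case~1'') and any $W_0$ satisfying (\ref{P}) together with $W_0(\zeta)=+\infty$ whenever $|\zeta_1\land\zeta_2|=0$ --- precisely the situation (\ref{*-CPC}) for which the lemma is intended. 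Then $W_0(\xi+\nabla\varphi)=+\infty$ on $T$ for \emph{every} admissible $\varphi$, and no Vitali exhaustion or diagonal limit makes $\int_Y W_0(\xi+\nabla\varphi)$ finite. Incidentally, \S 2.8.2 does \emph{not} proceed via ``Vitali covering $+$ shrinking frame'' as you suggest.

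The paper circumvents the obstruction in two moves. First, it works on the diamond $D$ (legitimate by Lemma~\ref{FonsecaLemma}) rather than the square: the edges of $D$ are at $45^\circ$, so the boundary constraint forces $\partial_1\psi=\mp\partial_2\psi$ instead of $\partial_1\psi=0$, and the four resulting matrices $(\xi_1\pm\nu\mid\xi_2\pm\nu)$ all have cross product $\ge\min\{|\xi_1+\xi_2|,|\xi_1-\xi_2|\}$. This yields the $p$-growth bound only under the intermediate hypothesis $\min\{|\xi_1\pm\xi_2|\}\ge\alpha$. Second, for general $\xi$ the paper does use the square construction --- where two of the four base matrices may well have zero cross product --- but it bounds $\Z W_0(\xi)$ by $\tfrac14\sum\Z W_0(\hbox{base}_i)$ via $\Z\Z W_0=\Z W_0$ (Lemma~\ref{FonsecaTheorem}(c) combined with Lemma~\ref{FonsecaLemma}), and checks that each base matrix satisfies the intermediate condition. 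The iteration through $\Z\Z W_0$ is not decorative: it is precisely what lets one bypass the unavoidable boundary triangles on which $W_0$ itself may be infinite.
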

Applying Theorem \ref{IRTnon-finite-case}(ii) we obtain

\begin{corollary}\label{corollary1}
If $W_0$ satisfies {\rm(\ref{P})} then
$$
\forall \psi\in W^{1,p}(\Omega;\RR^3)\ \ \displaystyle\overline{I}(\psi)=\overline{I}_{\rm aff}(\psi)=\int_\Omega \Q W_0(\nabla\psi(x))dx.
$$
\end{corollary}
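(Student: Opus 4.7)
The plan is to recognize Corollary \ref{corollary1} as an immediate consequence of Theorem \ref{IRTnon-finite-case}(ii), the only nontrivial ingredient being the verification of the $p$-polynomial growth of $\Z W_0$. Since $W_0$ is Borel measurable and $p$-coercive, the associated functional $I$ and its relaxations $\overline{I}$, $\overline{I}_{\rm aff}$ fit exactly the framework of \S 2.1, with $m=3$ and $N=2$.

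The decisive step is therefore to check the hypothesis
$$
\exists c>0\ \ \forall \xi\in\MM^{3\times 2}\ \ \Z W_0(\xi)\leq c(1+|\xi|^p),
$$
which is precisely the content of Lemma \ref{R-Lemma3}. The difficulty here is that condition (\ref{P}) only provides a polynomial bound on the ``good'' set $\{\xi=(\xi_1\mid\xi_2):|\xi_1\land\xi_2|\geq\alpha\}$. For an arbitrary $\xi\in\MM^{3\times 2}$ one must construct a competitor $\varphi\in\Aff_0(Y;\RR^3)$ such that $\xi+\nabla\varphi$ lies in that good set almost everywhere in $Y$, and for which $\int_Y W_0(\xi+\nabla\varphi)\,dy$ can be estimated from above by a quantity of order $1+|\xi|^p$. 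This is where the actual labor lies, and this is where I would expect the main obstacle: one typically uses a small rank-one laminate in a suitably chosen direction (so as to restore a non-zero cross product of the two columns) together with a Vitali-type covering to spread the perturbation over $Y$, keeping $|\eta_1\wedge\eta_2|\geq\alpha$ on each piece while absorbing the boundary corrections at negligible cost.

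Once Lemma \ref{R-Lemma3} is invoked, Theorem \ref{IRTnon-finite-case}(ii), applied to $W_0$ in place of $W$, yields directly
$$
\overline{I}(\psi)=\overline{I}_{\rm aff}(\psi)=\int_\Omega \Q W_0(\nabla\psi(x))\,dx \qquad \forall \psi\in W^{1,p}(\Omega;\RR^3),
$$
which is the claim of the corollary. In short, all the substance is absorbed into Lemma \ref{R-Lemma3} (whose own proof is deferred to \S 2.8.2), and the corollary itself is a one-line deduction: $p$-ampleness of $W_0$ in the sense of Definition \ref{Ample-Integrands} follows from (\ref{P}), and Theorem \ref{TheoremA-B}/Theorem \ref{IRTnon-finite-case}(ii) then concludes.
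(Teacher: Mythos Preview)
Your proposal is correct and matches the paper's own argument exactly: the paper states that Corollary~\ref{corollary1} follows by ``Applying Theorem~\ref{IRTnon-finite-case}(ii)'' once Lemma~\ref{R-Lemma3} supplies the $p$-polynomial growth of $\Z W_0$. Your only minor inaccuracy is in the parenthetical speculation about how Lemma~\ref{R-Lemma3} is proved---the actual argument in \S 2.8.2 uses explicit piecewise-affine constructions on fixed triangulations of a diamond and of $Y$, not a Vitali covering---but this does not affect the deduction of the corollary itself.
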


\subsection{Application 2: ``weak-Determinant Constraint"} The following condition on $W$ is compatible with ``weak-Determinant Constraint", i.e., (\ref{w-DC}).
\begin{equation}\label{D}
\exists\alpha,\beta>0\ \forall F\in\MM^{N\times N}\ \big(|\det F|\geq\alpha\then W(F)\leq\beta(1+|F|^p)\big).
\end{equation}
One can prove the following lemma which roughly means that the ``weak-Determinant Constraint" is $p$-ample (see also Lemma \ref{LeMMa-wDC}-bis below).
\begin{lemma}[\cite{oah-jpm08a}]\label{LeMMa-wDC}
If $W$ satisfies {\rm(\ref{D})} then $\Z W$ is of $p$-polynomial growth, i.e.,  $\exists c>0\ \forall F\in\MM^{N\times N}\ \Z W(F)\leq c(1+|F|^p)$.
\end{lemma}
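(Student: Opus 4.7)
The plan is to reduce the bound on $\Z W$ to the construction of a single piecewise affine competitor, and then to produce this competitor as an $N$th-order rank-one laminate whose supporting matrices all lie in the good region $\{|\det|\geq\alpha\}$. Concretely, it suffices to exhibit, for each $F\in\MM^{N\times N}$, a function $\varphi_F \in \Aff_0(Y;\RR^N)$ such that
\[
|\det(F+\nabla\varphi_F(y))|\geq\alpha\quad\text{and}\quad |F+\nabla\varphi_F(y)|\leq C(1+|F|)\quad\text{a.e.\ in }Y,
\]
with $C=C(N,\alpha)$ independent of $F$. Assumption \eqref{D} then yields $W(F+\nabla\varphi_F(y))\leq\beta\bigl(1+C^p(1+|F|)^p\bigr)$ almost everywhere, and integrating gives $\Z W(F)\leq c(1+|F|^p)$.

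The first (algebraic) step is to realise $F$ as the barycenter of a rank-one laminate contained in the target set $K_F:=\{G:|\det G|\geq\alpha,\ |G|\leq C(1+|F|)\}$. Put $s:=2(|F|+\alpha^{1/N})$ and, for each sign pattern $\epsilon\in\{-1,+1\}^N$, set $G_\epsilon:=F+s\,\diag(\epsilon_1,\ldots,\epsilon_N)$. Factorising $G_\epsilon = s\,\diag(\epsilon)\bigl(I+s^{-1}\diag(\epsilon)F\bigr)$ and bounding the operator norm of the correction by $|F|/s\leq 1/2$, one finds $|\det G_\epsilon|\geq(s/2)^N\geq\alpha$, while $|G_\epsilon|\leq |F|+s\sqrt{N}\leq C(N,\alpha)(1+|F|)$. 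Because $\sum_\epsilon\diag(\epsilon)=0$ one has $F=2^{-N}\sum_\epsilon G_\epsilon$, and any two $G_\epsilon,G_{\epsilon'}$ differing in exactly one coordinate $i$ are rank-one connected, $G_\epsilon-G_{\epsilon'}=\pm 2s\,e_i\otimes e_i$. The family $\{G_\epsilon\}_\epsilon$ thus supports an $N$th-order rank-one laminate centered at $F$, lying in $K_F$.

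The second step is the geometric realisation of this laminate as an element of $\Aff(Y;\RR^N)$ with affine boundary data $Fy$, carried out by the classical iterated strip construction: at level $i\in\{1,\ldots,N\}$, within each cell of the level-$(i{-}1)$ partition, superimpose a $1/k_i$-periodic scalar sawtooth of amplitude $s$ in the direction $e_i$, namely $y\mapsto s\,g_{k_i}(y_i)\,e_i$ with $k_i$ even. The rank-one connections $G_\epsilon-G_{\epsilon'}=\pm 2s\,e_i\otimes e_i$ make the successive sawtooth perturbations mutually compatible across strip interfaces, and the resulting $\phi_F\in\Aff(Y;\RR^N)$ has gradient $F+s\,\diag(g'_{k_1}(y_1),\ldots,g'_{k_N}(y_N))\in\{G_\epsilon\}_\epsilon$ almost everywhere.

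The main obstacle is to convert $\phi_F$ into a $\varphi_F:=\phi_F-Fy\in\Aff_0(Y;\RR^N)$: although each sawtooth $g_{k_i}$ vanishes at $y_i\in\{0,1\}$, the function $\phi_F-Fy=s\sum_i g_{k_i}(y_i)\,e_i$ fails to vanish on the remaining faces of $\partial Y$. This is handled by confining the iterated lamination to a slightly smaller cube $Y_\eta:=(\eta,1-\eta)^N$ and bridging to $\partial Y$ across the thin shell $Y\setminus Y_\eta$ by additional piecewise affine lamination layers whose gradients are again designed (by choosing further sign patterns adapted to the boundary geometry) to lie inside $K_F$, so that $W$ remains bounded by $\beta(1+C^p(1+|F|)^p)$ on the shell as well. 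Choosing the periods $k_i$ large and $\eta$ small, one obtains a $\varphi_F\in\Aff_0(Y;\RR^N)$ with $F+\nabla\varphi_F\in K_F$ almost everywhere, yielding the desired estimate $\Z W(F)\leq c(1+|F|^p)$.
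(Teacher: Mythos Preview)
Your algebraic step is correct and rather elegant: the $2^N$ matrices $G_\epsilon=F+s\,\diag(\epsilon)$ do form an $N$th-order laminate with barycentre $F$, rank-one connected along coordinate directions, and satisfying $|\det G_\epsilon|\geq\alpha$, $|G_\epsilon|\leq C(1+|F|)$. The difficulty is entirely in the geometric realisation, and your treatment of the boundary layer is where the argument breaks. In fact it is \emph{impossible} to have a nonzero $\varphi\in\Aff_0(Y;\RR^N)$ with $\nabla\varphi(y)\in\{s\,\diag(\epsilon):\epsilon\in\{-1,+1\}^N\}$ everywhere: the diagonal constraint $\partial_j\varphi^i=0$ for $i\neq j$ forces each component $\varphi^i$ to depend on $y_i$ alone, and the zero boundary condition on the faces $\{y_j=0\}$ (for $j\neq i$) then forces $\varphi^i\equiv 0$. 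So any competitor necessarily has gradients outside $\{G_\epsilon\}$ on some region, and the question is whether these can be kept in $K_F=\{|\det|\geq\alpha,\ |\cdot|\leq C(1+|F|)\}$. Your proposal asserts this can be arranged ``by choosing further sign patterns adapted to the boundary geometry'', but the natural candidates---the intermediate barycentres $F+s\,\diag(\epsilon_1,\ldots,\epsilon_{j-1},0,\epsilon_{j+1},\ldots,\epsilon_N)$ arising from the hierarchical splitting---do \emph{not} lie in $K_F$ in general: take $F=0$, then any such matrix is singular. Since $W$ may be $+\infty$ on the shell, its small measure is of no help.

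The paper avoids the boundary issue altogether by working at the level of the rank-one convex envelope. Your laminate in fact already gives, by iterating the rank-one convexity inequality through the hypercube of $G_\epsilon$'s,
\[
\R W(F)\;\leq\;2^{-N}\sum_{\epsilon}W(G_\epsilon)\;\leq\;c(1+|F|^p),
\]
and this is purely algebraic---no $\Aff_0$ competitor, hence no boundary layer, is needed. (The paper's Theorem~\ref{SingularValues} proves the same bound via the singular value decomposition of $F$, replacing the small singular values by $\pm\alpha$; your $G_\epsilon$ construction is arguably simpler.) The passage to $\Z W$ is then handled separately: once one knows $\Z W$ is finite, Lemma~\ref{FonsecaTheorem}(a) gives that $\Z W$ is rank-one convex, hence $\Z W\leq\R W\leq c(1+|F|^p)$. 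Establishing finiteness of $\Z W$ under (\ref{D}) is itself nontrivial and is the content of the cited reference; the paper only carries this out explicitly for $\Z_\infty W$ (Lemma~\ref{LeMMa-wDC}-bis, via Lemma~\ref{LeMMa}).
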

Applying Theorem \ref{IRTnon-finite-case}(ii) we obtain

\begin{corollary}\label{corollary2}
If $W$ satisfies {\rm(\ref{D})} then
$$
\forall \phi\in W^{1,p}(\Omega;\RR^N)\ \ \displaystyle\overline{I}(\phi)=\overline{I}_{\rm aff}(\phi)=\int_\Omega \Q W(\nabla\phi(x))dx.
$$
\end{corollary}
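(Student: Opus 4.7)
The plan is immediate: Corollary~\ref{corollary2} is designed as a direct consequence of Lemma~\ref{LeMMa-wDC} combined with Theorem~\ref{IRTnon-finite-case}(ii). First, under the hypothesis (\ref{D}), I invoke Lemma~\ref{LeMMa-wDC}, which states precisely that under (\ref{D}) one has $\Z W(F)\leq c(1+|F|^p)$ for some constant $c>0$ and every $F\in\MM^{N\times N}$. This is exactly the hypothesis of Theorem~\ref{IRTnon-finite-case}(ii). Second, I apply that theorem to obtain, for every $\phi\in W^{1,p}(\Omega;\RR^N)$,
$$
\overline{I}(\phi)=\overline{I}_{\rm aff}(\phi)=\int_\Omega\Q W(\nabla\phi(x))\,dx,
$$
which is the desired conclusion.

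At the level of the corollary itself there is therefore no mathematical work to do beyond this one-line deduction. The genuine content lies in Lemma~\ref{LeMMa-wDC}, which must translate the pointwise condition (\ref{D}) --- giving polynomial control of $W$ only where $|\det F|\geq\alpha$ --- into uniform polynomial control of $\Z W$ over all of $\MM^{N\times N}$, including matrices with $|\det F|<\alpha$ or $\det F=0$. The main obstacle is to construct, for each such singular $F$, a piecewise affine competitor $\varphi\in\Aff_0(Y;\RR^N)$ whose gradient satisfies $|\det(F+\nabla\varphi)|\geq\alpha$ a.e.\ on $Y$ while $\int_Y|F+\nabla\varphi|^p\,dy$ remains bounded by a constant multiple of $1+|F|^p$; the natural approach is a rank-one lamination, adding alternating perturbations $\pm t\,a\otimes b$ on thin slabs (so that the boundary trace cancels) and exploiting the affine dependence of $\det(F\pm t\,a\otimes b)$ on $t$ to push the determinant past $\alpha$ with a controlled amplitude. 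Once that lemma is established, (\ref{D}) yields $\int_Y W(F+\nabla\varphi)\,dy\leq\beta\int_Y(1+|F+\nabla\varphi|^p)\,dy\leq c(1+|F|^p)$, and the corollary follows by the two-step deduction above.
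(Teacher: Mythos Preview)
Your deduction is correct and is exactly the route the paper signals just before stating the corollary (``Applying Theorem~\ref{IRTnon-finite-case}(ii) we obtain''). However, the proof the paper actually writes out is explicitly labeled ``Proof (of a part of Corollary~\ref{corollary2})'' and follows a different path: rather than invoking Lemma~\ref{LeMMa-wDC} as a black box (its proof is deferred to \cite{oah-jpm08a}), the paper verifies the hypotheses of Theorem~\ref{IRTnon-finite-case-bis}(i) directly, by showing that (a) $\R W$ has $p$-polynomial growth---this is obtained from the singular-value decomposition argument of Theorem~\ref{SingularValues}, see Remark~\ref{RemarkSingularValues}---and (b) $\Z_\infty W$ is finite, using the Dacorogna--Ribeiro Lemma~\ref{LeMMa} to produce, for each $F$ with $|\det F|<\alpha$, a $W^{1,\infty}_0$ competitor whose gradient has determinant of modulus exactly $\alpha$. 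This alternative route yields only the identity $\overline{I}(\phi)=\int_\Omega\Q W(\nabla\phi)\,dx$, not the $\overline{I}_{\rm aff}$ equality, which is why the paper calls it a proof of ``a part''. Your approach delivers the full statement in one stroke but relies on the unproved Lemma~\ref{LeMMa-wDC}; the paper's written argument is more self-contained for the weaker conclusion, and along the way isolates Lemma~\ref{LeMMa-wDC}-bis (polynomial growth of $\Z_\infty W$). Your informal sketch of how Lemma~\ref{LeMMa-wDC} might be proved (rank-one lamination pushing the determinant past $\alpha$) is in the right spirit but is not the mechanism the paper exhibits for the $\Z_\infty$ version, which instead quotes the Dacorogna--Ribeiro existence result wholesale.
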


\begin{proof}[\bf Proof (of a part of Corollary \ref{corollary2})]
Taking Theorem \ref{IRTnon-finite-case-bis}(i) into account, it suffices to verify the following two points:
\begin{itemize}
\item[\SMALL$\blacklozenge$] (\ref{D}) $\then$ $\exists c>0\ \forall F\in\MM^{N\times N}\ \R W(F)\leq c(1+|F|^p)$;
\item[\SMALL$\blacklozenge$] (\ref{D}) $\then$ $\Z_\infty W<+\infty$,
\end{itemize}
which will give us the desired integral representation for $\overline{I}$. The first point is essentially due to a lemma by Ben Belgacem: it is a direct consequence of Theorem \ref{SingularValues} (see Remark \ref{RemarkSingularValues}) whose proof is given in \S 3.6.4.  For the second point, it is obvious that  $\Z_\infty W(F)<+\infty$ for all $F\in\MM^{N\times N}$ with $|\det F|\geq \alpha$. On the other hand, we have

\begin{lemma}[\cite{daco-rib04}, see also \cite{celada-perrotta98}]\label{LeMMa}
For all $F\in\MM^{N\times N}$, if $|\det F|<\alpha$ then there exists 
 $\varphi\in W^{1,\infty}_0(Y;\RR^N)$ such that $|\det(F+\nabla\varphi(x))|=\alpha$ a.e. in $Y$.
\end{lemma}
Hence, if $F\in\MM^{N\times N}$ is such that $|\det F|<\alpha$ then $\Z_\infty W(F)\leq\int_{Y}W(F+\nabla\varphi(x))dx$ with some $\varphi\in W^{1,\infty}_0(Y;\RR^N)$ given by Lemma \ref{LeMMa}, and so $\Z_\infty W(F)\leq 2^p\beta(1+|F|^p+\|\nabla\varphi\|^p_{L^p})<+\infty$.
\end{proof}

\begin{remark}
From the previous proof, we can isolate the following result.
\newtheorem*{LemmaBis}{\bf Lemma \ref{LeMMa-wDC}-bis}
\begin{LemmaBis}
If $W$ satisfies {\rm(\ref{D})} then $\Z_\infty W$ is of $p$-polynomial growth, i.e.,  $\exists c>0\ \forall F\in\MM^{N\times N}\ \Z_\infty W(F)\leq c(1+|F|^p)$.
\end{LemmaBis}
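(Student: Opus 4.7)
The strategy is to follow the second bullet in the proof of Corollary \ref{corollary2}, but this time to keep track of the quantitative dependence of all estimates on $F$, thereby upgrading the mere finiteness of $\Z_\infty W$ to $p$-polynomial growth. The proof naturally splits according to whether $|\det F|\geq\alpha$ or $|\det F|<\alpha$.

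The easy range is $|\det F|\geq\alpha$: choosing $\varphi\equiv 0$ as a test function in the definition of $\Z_\infty W$ and invoking (\ref{D}) gives
$$\Z_\infty W(F)\leq W(F)\leq \beta(1+|F|^p).$$
In the range $|\det F|<\alpha$, I would apply Lemma \ref{LeMMa} to produce $\varphi_F\in W^{1,\infty}_0(Y;\RR^N)$ with $|\det(F+\nabla\varphi_F(x))|=\alpha$ a.e.\ in $Y$. Since $F+\nabla\varphi_F(x)$ then lies in the good region of (\ref{D}), applying the growth condition pointwise, using $|F+\nabla\varphi_F(x)|^p\leq 2^{p-1}(|F|^p+|\nabla\varphi_F(x)|^p)$, and integrating over $Y$ yields
$$\Z_\infty W(F)\leq \int_Y W(F+\nabla\varphi_F(x))\,dx\leq 2^p\beta\bigl(1+|F|^p+\|\nabla\varphi_F\|_{L^p(Y)}^p\bigr).$$

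The main obstacle, which is the only non-trivial input, is to upgrade the qualitative statement of Lemma \ref{LeMMa} to a \emph{quantitative} version ensuring that $\varphi_F$ can be selected with
$$\|\nabla\varphi_F\|_{L^p(Y)}^p\leq C(1+|F|^p),$$
with $C$ depending only on $\alpha,p,N$. Intuitively this should be achievable by revisiting the Dacorogna--Ribeiro construction: one reaches the level set $\{|\det\cdot|=\alpha\}$ from $F$ via iterated rank-one laminations whose amplitudes are controlled by the singular values of $F$ and by $\alpha^{1/N}$, so that the corrector can be built with $\|\nabla\varphi_F\|_{L^\infty(Y)}\leq C(|F|+\alpha^{1/N})$. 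Verifying this bound requires an inspection of the layering scheme rather than just its existence statement, and it is the step where care must be taken not to let constants blow up as $|F|\to\infty$ under the constraint $|\det F|<\alpha$.

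Once the quantitative bound on $\|\nabla\varphi_F\|_{L^p(Y)}$ is in hand, substitution into the preceding display gives $\Z_\infty W(F)\leq C'(1+|F|^p)$ with $C'$ depending only on $\alpha,\beta,p,N$. Combined with the trivial estimate on the range $|\det F|\geq\alpha$, this establishes the desired $p$-polynomial growth of $\Z_\infty W$, completing the proof of Lemma \ref{LeMMa-wDC}-bis.
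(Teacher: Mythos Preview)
Your proposal has a genuine gap at the step you yourself flag as the ``main obstacle'': the quantitative estimate $\|\nabla\varphi_F\|_{L^p(Y)}^p\leq C(1+|F|^p)$ for the corrector produced by Lemma~\ref{LeMMa}. Lemma~\ref{LeMMa} as stated gives only existence of \emph{some} $\varphi_F\in W^{1,\infty}_0(Y;\RR^N)$ solving the prescribed-Jacobian problem, with no control on its norm in terms of $F$. You propose to extract such a bound by ``revisiting the Dacorogna--Ribeiro construction'', but you do not carry this out; and while it is plausible that their lamination scheme yields $\|\nabla\varphi_F\|_{L^\infty}\leq C(|F|+\alpha^{1/N})$, this is a nontrivial inspection that must actually be performed (and is sensitive to the geometry of the singular values of $F$ when $|\det F|<\alpha$ but $|F|$ is large). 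As written, your argument is a plan, not a proof.

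The paper avoids this difficulty entirely and uses a much softer route. From the proof of Corollary~\ref{corollary2} one has two ingredients: (a) $\Z_\infty W$ is \emph{finite} (this is exactly the qualitative statement you obtain from Lemma~\ref{LeMMa}, with no need for a quantitative bound on $\varphi_F$); and (b) $\R W$ is of $p$-polynomial growth, by Theorem~\ref{SingularValues} (see Remark~\ref{RemarkSingularValues}). Now (a) together with Lemma~\ref{FonsecaTheorem}(a) implies that $\Z_\infty W$ is rank-one convex, hence $\Z_\infty W\leq \R W$. Combining with (b) gives the $p$-polynomial growth of $\Z_\infty W$ directly. In short: rather than quantifying the corrector in Lemma~\ref{LeMMa}, the paper uses only its qualitative conclusion and transfers the growth bound from $\R W$ via rank-one convexity.
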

\end{remark}

\subsection{From $p$-ample to non-$p$-ample case}

Because of the following theorem, none of the theorems of this section  can be directly used for dealing with Problem \ref{problem1} under the ``stong-Determinant Constraint", i.e., (\ref{s-DC}).
\begin{theorem}[\cite{fonseca88}]
If $W$ satisfies {\rm(\ref{s-DC})} then{\rm:}
\begin{itemize}
\item[(F$_1$)] $\Q W$ is rank-one convex{\rm;}
\item[(F$_2$)] $\Q W(F)=+\infty$ if and only if $\det F\leq 0$ and $\Q W(F)\to+\infty$ as $\det F\to 0^+$.
\end{itemize}
\end{theorem}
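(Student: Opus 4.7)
For (F$_1$), the plan is to note first that $\Q W$, being the pointwise supremum of the family of quasiconvex functions dominated by $W$, is itself quasiconvex, and then to invoke the standard first-order laminate argument (testing Morrey's inequality against a simple laminate of gradients $F_1,F_2$ with $F_2-F_1$ rank-one) to conclude that $\Q W$ is rank-one convex. This extends to the extended real-valued case without change of idea.

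For (F$_2$), the direction $\det F > 0 \then \Q W(F) < +\infty$ is immediate from $\Q W \leq W$ and (\ref{s-DC}). For the converse and the blow-up, my plan is to construct a polyconvex minorant of $W$ carrying the determinant constraint. Define
$$
g(s) := \inf\{W(F) : \det F = s\} \text{ for } s>0, \qquad g(s) := +\infty \text{ for } s\leq 0,
$$
so that (\ref{s-DC}) yields $g(s) \to +\infty$ as $s \to 0^+$ (otherwise extract $F_n$ with $\det F_n \to 0^+$ and $W(F_n)$ bounded, contradicting the hypothesis). Let $g^{**}$ denote the lower semicontinuous convex envelope of $g$ on $\RR$. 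Then $F\mapsto g^{**}(\det F)$ is polyconvex and lower semicontinuous, and dominated by $W$ since $g^{**} \leq g$ and $g(\det F) \leq W(F)$; applying Jensen's inequality to the null Lagrangian $\det$ shows it is quasiconvex, so $g^{**}\circ\det \leq \Q W$.

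The remaining step is to transfer the blow-up from $g$ to $g^{**}$. Given $M>0$, choose $\delta>0$ with $g \geq M$ on $(0,\delta)$; the affine function $h(s) := M(1-s/\delta)$ then satisfies $h \leq g$ on all of $\RR$ (trivially on $(-\infty, 0]$ where $g=+\infty$; $h \leq M \leq g$ on $(0,\delta)$; $h \leq 0 \leq g$ on $[\delta,+\infty)$), giving $g^{**}(0) \geq h(0) = M$ and hence $g^{**}(0) = +\infty$. Convexity of $g^{**}$ combined with its finiteness at any $s_1 > 0$ (where $g(s_1) < +\infty$) propagates this to $g^{**}(s) = +\infty$ for all $s \leq 0$, and lower semicontinuity at $0$ then forces $g^{**}(s) \to +\infty$ as $s \to 0^+$. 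Combining with $\Q W \geq g^{**}\circ\det$ yields (F$_2$). The subtlest point I anticipate is justifying ``polyconvex $+$ lower semicontinuous $\Then$ quasiconvex'' for an extended real-valued integrand, which is precisely why I work with the lower semicontinuous convex envelope rather than a bare convex envelope.
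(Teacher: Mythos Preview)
The paper does not prove this theorem; it is quoted from Fonseca's 1988 paper and used as a black box in \S2.7. So there is no in-paper argument to compare against, and I assess your proposal on its own.

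Your treatment of (F$_2$) is correct and efficient. Building the polyconvex minorant $g^{**}\circ\det$, checking $g^{**}\circ\det\leq\Q W$ via Jensen applied to the null Lagrangian $\det$, and then pushing the blow-up of $g$ through to $g^{**}$ by the affine-minorant trick $h(s)=M(1-s/\delta)$ is exactly the right mechanism; every step is sound. This is, in spirit, the same polyconvexity-based lower bound that Fonseca uses.

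For (F$_1$) there is a genuine subtlety you wave past. The laminate test for ``quasiconvex $\Rightarrow$ rank-one convex'' requires a competitor $\varphi\in W^{1,\infty}_0(Y;\RR^N)$, and any such competitor built from a simple laminate necessarily carries a boundary layer where $F+\nabla\varphi$ takes values \emph{other} than $F_1,F_2$. If $\Q W$ happens to be $+\infty$ at those auxiliary matrices, the Morrey inequality is vacuous and the argument stalls; for general extended-real-valued integrands the implication can actually fail. Two clean repairs are available here. First, if your working definition of $\Q W$ is the supremum over \emph{finite-valued} quasiconvex minorants (the convention in Dacorogna), then each minorant is rank-one convex by the standard argument and so is their pointwise supremum---this sidesteps the issue entirely. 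Second, if you prefer to argue directly with $\Q W$, establish (F$_2$) first: since $t\mapsto\det(F+t\,a\otimes b)$ is affine, the whole rank-one segment between $F_1$ and $F_2$ (both with $\det>0$) lies in the open set $\{\det>0\}$ where $\Q W$ is finite; one can then design the piecewise-affine laminate so that the boundary-layer gradients are also rank-one perturbations landing in $\{\det>0\}$, after which the limit $n\to\infty$ goes through. Either route is fine, but you should commit to one rather than asserting that the extended-real-valued case goes ``without change of idea.''
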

The assertion (F$_2$) roughly says  that the ``strong-Determinant Constraint" is not $p$-ample, i.e., $\Z_\infty W$ cannot be of $p$-polynomial growth, and so neither Theorem \ref{RTnon-finite-case} nor Theorem \ref{IRTnon-finite-case} is consistent with (\ref{s-DC}). From the assertion (F$_1$) we see that $\Q W\leq\R W$ which shows that $\R W$ cannot be of $p$-polynomial growth when combined with (F$_2$). Hence, the theorem of Ben Belgacem is not compatible with (\ref{s-DC}).

\begin{question}
Develop strategies for passing from $p$-ample to non-$p$-ample case.
\end{question}

\subsection{Complementary proofs}

\subsubsection{Proof of Lemmas \ref{R-Lemma1} and \ref{R-Lemma2}} It is sufficient to prove that if $\phi\in\Aff(\Omega;\RR^m)$ then
\begin{eqnarray}\label{step1VitAli}
\overline{I}(\phi)\leq\int_\Omega \Z_\infty W(\nabla \phi(x))dx\hbox{ (resp. }\overline{I}_{\rm aff}(\phi)\leq \int_\Omega \Z W(\nabla \phi(x))dx).
\end{eqnarray}
By definition, there exists a finite family $(V_i)_{i\in I}$ of open disjoint subsets of $\Omega$ such that $|\Omega\setminus\cup_{i\in I}V_i|=0$ and for every $i\in I$, $|\partial V_i|=0$ and $\nabla\phi(x)=F_i$ in $V_i$ with $F_i\in\MM^{m\times N}$.  Given $\delta>0$ and $i\in I$, we consider $\varphi_i\in W^{1,\infty}_0(Y;\RR^m)$ (resp. $\varphi_i\in \Aff_0(Y;\RR^m)$) such that
\begin{eqnarray}\label{Zk}
&&\int_Y W(F_i+\nabla\varphi_i(y))dy\leq\Z_\infty W(F_i)+{\delta\over|\Omega|}\\ 
&&\hbox{(resp. }\int_Y W(F_i+\nabla\varphi_i(y))dy\leq\Z W(F_i)+{\delta\over|\Omega|}).\nonumber
\end{eqnarray}
Fix any integer $n\geq 1$. By the Vitali covering theorem, there exists a finite or countable family $(a_{i,j}+\alpha_{i,j}Y)_{j\in J_{i}}$ of disjoint subsets of $V_i$, where $a_{i,j}\in\RR^N$ et $0<\alpha_{i,j}<{1\over n}$, such that
$
\big|V_i\setminus\cup_{j\in J_{i}}(a_{i,j}+\alpha_{i,j}Y)\big|=0
$
 (and so $\sum_{j\in J_i}\alpha_{i,j}^N=|V_i|$). Define $\phi_n\in W^{1,\infty}_0(\Omega;\RR^m)$ (resp. $\phi_n\in \Aff_0(\Omega;\RR^m)$) by 
$$
\phi_n(x):=
\alpha_{i,j}\varphi_{i}\left({x-a_{i,j}\over \alpha_{i,j}}\right)\hbox{ si }x\in a_{i,j}+\alpha_{i,j}Y.
$$
Clearly, we have  $\|\phi_n\|_{L^\infty(\Omega;\RR^m)}\leq {1\over n}\max_{i\in I}\|\varphi_i\|_{L^\infty(Y;\RR^m)}$ and $\|\nabla\phi_n\|_{L^\infty(\Omega;\MM^{m\times N})}\leq\max_{i\in I}\|\nabla\varphi_i\|_{L^\infty(Y;\MM^{m\times N})}$, and so, up to a subsequence,  $\phi_n\stackrel{*}{\wto}0$ in $W^{1,\infty}(\Omega;\RR^m)$, where ``$\stackrel{*}{\wto}$" denotes the weak* convergence in  $W^{1,\infty}(\Omega;\RR^m)$. Hence $\phi_n\wto 0$ in $W^{1,p}(\Omega;\RR^m)$. Consequently, up to a subsequence, $\phi_n\to 0$ in $L^p(\Omega;\RR^m)$. Moreover, we have 
\begin{eqnarray*}
\int_\Omega W\left(\nabla \phi(x)+\nabla\phi_n(x)\right)dx&=&\sum_{i\in I}\int_{V_i} W\left(F_i+\nabla\phi_n(x)\right)dx\\
&=&\sum_{i\in I}\sum_{j\in J_i}\alpha_{i,j}^N\int_{Y}W\left(F_i+\nabla\varphi_{i}(y)\right)dy\\
&=&\sum_{i\in I}|V_i|\int_{Y}W\left(F_i+\nabla\varphi_{i}(y)\right)dy.
\end{eqnarray*}
Since $\phi+\phi_n\in W^{1,\infty}(\Omega;\RR^m)$ (resp. $\phi+\phi_n\in\Aff(\Omega;\RR^m)$) and $\phi+\phi_n\to \phi$ in $L^{p}(\Omega;\RR^m)$, using (\ref{Zk}) we deduce that 
\begin{eqnarray*}
\overline{I}(\phi)\leq\liminf_{n\to+\infty}\int_\Omega W\left(\nabla \phi(x)+\nabla\phi_n(x)\right)dx&\leq&\sum_{i\in I}|V_i|\Z W(F_i)+\delta\\
&=&\int_\Omega\Z_\infty W(\nabla \phi(x))dx + \delta\\
\hbox{(resp. }\overline{I}_{\rm aff}(\phi)\leq \int_\Omega\Z W(\nabla \phi(x))dx + \delta),
\end{eqnarray*}
and (\ref{step1VitAli}) follows by letting $\delta\to 0$. $\blacksquare$


\subsubsection{Proof of Lemma \ref{R-Lemma3}} We begin by proving that $\Z W_0$ satisfies the following condition.
\begin{equation}\label{InTERConditioN}
\exists\gamma>0\ \forall \xi\in\MM^{3\times 2}\ \big(\min\{|\xi_1+\xi_2|,|\xi_1-\xi_2|\}\geq\alpha\then W_0(F)\leq\gamma(1+|F|^p)\big).
\end{equation}

Let $\xi=(\xi_1\mid \xi_2)\in\MM^{3\times 2}$ be such that $\min\{|\xi_1+\xi_2|,|\xi_1-\xi_2|\}\geq\alpha$. Then, one of the three possibilities holds:
\begin{eqnarray}\label{Possibility1Vect}
&&|\xi_1\land \xi_2|\not=0;\\ 
&&|\xi_1\land \xi_2|=0\hbox{ avec }\xi_1\not=0;\label{Possibility2Vect}\\
&&|\xi_1\land \xi_2|=0\hbox{ avec }\xi_2\not=0. \label{Possibility3Vect}
\end{eqnarray}
Set
$
D:=\{(x_1,x_2)\in\RR^2:x_1-1<x_2<x_1+1\hbox{ et }-x_1-1<x_2<1-x_1\}
$ 
and define $\psi\in\Aff_0(D;\RR)$ by
$$
\psi(x_1,x_2):=\left\{
\begin{array}{ll}
-x_1+(x_2+1)&\hbox{si }(x_1,x_2)\in\Delta_1\\
(1-x_1)-x_2&\hbox{si }(x_1,x_2)\in\Delta_2\\
x_1+(1-x_2)&\hbox{si }(x_1,x_2)\in\Delta_3\\
(x_1+1)+x_2&\hbox{si }(x_1,x_2)\in\Delta_4
\end{array}
\right.
$$
with:
\begin{itemize} 
\item[]$\Delta_1:=\{(x_1,x_2)\in D:x_1\geq 0\hbox{ et } x_2\leq 0\}$;
\item[]$\Delta_2:=\{(x_1,x_2)\in D:x_1\geq 0\hbox{ et }x_2\geq 0\}$;
\item[]$\Delta_3:=\{(x_1,x_2)\in D:x_1\leq 0\hbox{ et }x_2\geq 0\}$;
\item[]$\Delta_4:=\{(x_1,x_2)\in D:x_1\leq 0\hbox{ et }x_2\leq 0\}$.
\end{itemize}
Consider $\varphi\in\Aff_0(D;\RR^3)$ given by
$$
\varphi(x):=\psi(x)\nu\hbox{ avec }\left\{\begin{array}{ll}\nu={\xi_1\land \xi_2\over|\xi_1\land \xi_2|}&\hbox{si on a (\ref{Possibility1Vect})}\\
|\nu|=1 \hbox{ et }\langle \xi_1,\nu\rangle=0&\hbox{si on a (\ref{Possibility2Vect})}\\
|\nu|=1 \hbox{ et }\langle \xi_2,\nu\rangle=0&\hbox{si on a (\ref{Possibility3Vect}),}\end{array}\right.
$$
where $\langle\cdot,\cdot\rangle$ denotes the scalar product in $\RR^3$. Then
$$
\xi+\nabla\varphi(x)=\left\{
\begin{array}{ll}
(\xi_1-\nu\mid \xi_2+\nu)&\hbox{si }x\in{\rm int}(\Delta_1)\\
(\xi_1-\nu\mid \xi_2-\nu)&\hbox{si }x\in{\rm int}(\Delta_2)\\
(\xi_1+\nu\mid \xi_2-\nu)&\hbox{si }x\in{\rm int}(\Delta_3)\\
(\xi_1+\nu\mid \xi_2+\nu)&\hbox{si }x\in{\rm int}(\Delta_4)
\end{array}
\right.
$$
with ${\rm int}(E)$ denoting the interior of $E$. We need the following result.
\begin{lemma}[\cite{fonseca88}]\label{FonsecaLemma}
For every bounded open set $D\subset\RR^2$ with $|\partial D|=0$ and every $\xi\in\MM^{3\times 2}$,
$$
\Z W_0(\xi)=\inf\left\{{1\over |D|}\int_D W_0(\xi+\nabla\varphi(x))dx:\varphi\in\Aff_0(D;\RR^3)\right\}.
$$
\end{lemma}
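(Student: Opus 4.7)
The plan is to establish both inequalities $\Z W_0(\xi)\le J(\xi)$ and $J(\xi)\le\Z W_0(\xi)$, where $J(\xi)$ denotes the right-hand side infimum. Both directions follow from the Vitali covering plus affine rescaling mechanism, in essentially the same spirit as the argument used in \S 2.8.1 to prove Lemmas \ref{R-Lemma1} and \ref{R-Lemma2}. The point is that any admissible test function on one reference domain can be pasted together from rescaled copies of an admissible test function on the other, while a cell-wise change of variables converts the volume-averaged integrals into each other.

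For $\Z W_0(\xi)\le J(\xi)$, fix $\varphi\in\Aff_0(D;\RR^3)$. Applying the Vitali covering theorem to $Y=\,]0,1[^2$ with the family $\{b+\beta D:b\in\RR^2,\,\beta>0\}$ of translates-and-dilates of $D$ produces a finite or countable disjoint subfamily $(b_j+\beta_j D)_j\subset Y$ with $\big|Y\setminus\cup_j(b_j+\beta_j D)\big|=0$, and hence $\sum_j\beta_j^{2}|D|=1$. Define
$$\tilde\varphi(y):=\beta_j\,\varphi\!\Big(\frac{y-b_j}{\beta_j}\Big)\ \hbox{ on }b_j+\beta_j D,\qquad\tilde\varphi(y):=0\ \hbox{ elsewhere}.$$
Since $\varphi=0$ on $\partial D$, $\tilde\varphi$ is continuous across every cell boundary and vanishes on $\partial Y$, so $\tilde\varphi\in\Aff_0(Y;\RR^3)$ in the (possibly countable) piecewise-affine convention used throughout \S 2.8.1. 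The cell-wise change of variables together with $\nabla\tilde\varphi(y)=\nabla\varphi((y-b_j)/\beta_j)$ on $b_j+\beta_j D$ gives
$$\int_Y W_0(\xi+\nabla\tilde\varphi(y))\,dy=\sum_j\beta_j^{2}\int_D W_0(\xi+\nabla\varphi(x))\,dx=\frac{1}{|D|}\int_D W_0(\xi+\nabla\varphi(x))\,dx;$$
taking the infimum over $\varphi$ yields $\Z W_0(\xi)\le J(\xi)$.

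The reverse inequality is obtained by running the same argument with $D$ and $Y$ swapped: given $\psi\in\Aff_0(Y;\RR^3)$, Vitali provides a disjoint family $(a_j+\alpha_j Y)_j\subset D$ covering $D$ up to a null set (so that $\sum_j\alpha_j^{2}=|D|$), and the rescaled paste $\tilde\psi(x):=\alpha_j\psi(\frac{x-a_j}{\alpha_j})$ on $a_j+\alpha_j Y$ belongs to $\Aff_0(D;\RR^3)$ and satisfies $|D|^{-1}\int_D W_0(\xi+\nabla\tilde\psi(x))\,dx=\int_Y W_0(\xi+\nabla\psi(y))\,dy$ by the same change of variables; the infimum over $\psi$ gives $J(\xi)\le\Z W_0(\xi)$.

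The only real obstacle is the admissibility of $\tilde\varphi$ and $\tilde\psi$ as elements of $\Aff_0$ when the Vitali subfamily is genuinely countable, since the continuous piecewise affine structure then has countably many affine cells; this is precisely the convention already adopted in \S 2.8.1, so no extra work is needed. Beyond that point everything is elementary: the two key cell-level identities are the area scaling $|b_j+\beta_j D|=\beta_j^{2}|D|$ (resp.\ $|a_j+\alpha_j Y|=\alpha_j^{2}$) and the gradient identity for $\tilde\varphi$ (resp.\ $\tilde\psi$), both of which encode that the rescaling is volume-preserving once one normalises by $|D|$ on one side and by $|Y|=1$ on the other.
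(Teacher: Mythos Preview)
The paper does not prove this lemma: it is quoted from \cite{fonseca88} and used as a black box in \S 2.8.2, so there is no in-paper argument to compare against.

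Your approach is the standard one and is correct. The Vitali-covering/rescaling mechanism you describe is precisely how one shows domain-independence of the cell formula, and your two computations (covering $Y$ by copies of $D$, then $D$ by copies of $Y$) give the two inequalities cleanly. The only substantive point is the one you flag yourself: when the Vitali subfamily is countable, the pasted function $\tilde\varphi$ (resp.\ $\tilde\psi$) is Lipschitz with $\|\nabla\tilde\varphi\|_\infty\le\|\nabla\varphi\|_\infty$, vanishes on the boundary, and is piecewise affine on countably many cells, but is not literally in $\Aff_0$ if that space is read as ``finitely many pieces''. You are right that \S 2.8.1 already uses exactly this construction and declares the result to lie in $\Aff_0$, so within the paper's conventions the point is moot. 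If one wanted to avoid this entirely, one could either (a) truncate the Vitali family to finitely many cells and, on the small leftover set, repeat the construction with a \emph{given} admissible competitor on $D$ (resp.\ $Y$) rather than the zero function, so that the leftover contribution is finite and goes to zero with the leftover measure; or (b) observe that the pasted function is in $W^{1,\infty}_0$ and appeal to the formula $\Z_\infty W_0$ instead, together with $\Z W_0=\Z_\infty W_0$ once finiteness is known. Either way, the core of your argument is the right one.
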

Using Lemma \ref{FonsecaLemma} we deduce that
\begin{eqnarray}\label{Z_1}
\Z W_0(\xi)&\leq&{1\over 4}\left(W_0(\xi_1-\nu\mid \xi_2+\nu)+W_0(\xi_1-\nu\mid \xi_2-\nu)\right.\\
&&\left.+\ W_0(\xi_1+\nu\mid \xi_2-\nu)+W_0(\xi_1+\nu\mid \xi_2+\nu)\right).\nonumber
\end{eqnarray}
But
$
|(\xi_1-\nu)\land(\xi_2+\nu)|^2=|\xi_1\land \xi_2+(\xi_1+\xi_2)\land\nu|^2
=|\xi_1\land \xi_2|^2+|(\xi_1+\xi_2)\land\nu|^2
\geq|(\xi_1+\xi_2)\land\nu|^2,
$
hence
$$
|(\xi_1+\nu)\land(\xi_2-\nu)|\geq |(\xi_1+\xi_2)\land\nu|=|\xi_1+\xi_2|.
$$
Similarly, we obtain: 
\begin{itemize}
\item[]$|(\xi_1-\nu)\land(\xi_2-\nu)|\geq |\xi_1-\xi_2|$;
\item[]$|(\xi_1+\nu)\land(\xi_2-\nu)|\geq |\xi_1+\xi_2|$;
\item[]$|(\xi_1+\nu)\land(\xi_2+\nu)|\geq |\xi_1-\xi_2|$.
\end{itemize}
Thus $|(\xi_1-\nu)\land(\xi_2+\nu)|\geq\alpha$, $|(\xi_1-\nu)\land(\xi_2-\nu)|\geq\alpha$, $|(\xi_1+\nu)\land(\xi_2-\nu)|\geq\alpha$ et $|(\xi_1+\nu)\land(\xi_2+\nu)|\geq\alpha$ because $\min\{|\xi_1+\xi_2|,|\xi_1-\xi_2|\}\geq\alpha$. Using {\rm(\ref{P})} it follows that
\begin{eqnarray*}
W_0(\xi_1-\nu\mid \xi_2+\nu)&\leq&\beta(1+|(\xi_1-\nu\mid \xi_2+\nu)|^p)\\
&\leq&\beta 2^p(1+|(\xi_1\mid \xi_2)|^p+|(-\nu\mid \nu)|^p)\\
&\leq&\beta 2^{2p+1}(1+|\xi|^p).
\end{eqnarray*}
In the same manner, we have: 
\begin{itemize}
\item[]$W_0(\xi_1-\nu\mid \xi_2-\nu)\leq \beta 2^{2p+1}(1+|\xi|^p)$;
\item[]$W_0(\xi_1+\nu\mid \xi_2-\nu)\leq \beta 2^{2p+1}(1+|\xi|^p)$;
\item[]$W_0(\xi_1+\nu\mid \xi_2+\nu)\leq \beta 2^{2p+1}(1+|\xi|^p)$,
\end{itemize}
and from (\ref{Z_1}) we conclude that $\Z W_0(\xi)\leq \beta 2^{2p+1}(1+|\xi|^p)$, which proves (\ref{InTERConditioN}).

\medskip

We now prove that $\Z W_0$ is of $p$-polynomial growth, i.e., 
\begin{equation}\label{EndConDITIoN}
\exists c>0\ \forall F\in\MM^{3\times 2}\ \Z W_0(\xi)\leq c(1+|\xi|^p).
\end{equation}
Let $\xi=(\xi_1\mid \xi_2)\in\MM^{3\times 2}$. Then, one of the four possibilities holds: 
\begin{eqnarray}
&&|\xi_1\land \xi_2|\not=0;\label{Possibility(i)Vect}\\
&&|\xi_1\land \xi_2|=0\hbox{ avec }\xi_1=\xi_2=0;\label{Possibility(ii)Vect}\\
&&|\xi_1\land \xi_2|=0\hbox{ avec }\xi_1\not=0;\label{Possibility(iii)Vect}\\
&&|\xi_1\land \xi_2|=0\hbox{ avec }\xi_2\not=0. \label{Possibility(iv)Vect}
\end{eqnarray}
Define $\psi\in\Aff_0(Y;\RR)$ by 
$$
\psi(x_1,x_2):=\left\{
\begin{array}{ll}
x_2&\hbox{si }(x_1,x_2)\in\Delta_1\\
(1-x_1)&\hbox{si }(x_1,x_2)\in\Delta_2\\
(1-x_2)&\hbox{si }(x_1,x_2)\in\Delta_3\\
x_1&\hbox{si }(x_1,x_2)\in\Delta_4
\end{array}
\right.
$$
with:
\begin{itemize} 
\item[]$\Delta_1:=\{(x_1,x_2)\in Y:x_2\leq x_1\leq -x_2+1\}$;
\item[]$\Delta_2:=\{(x_1,x_2)\in Y:-x_1+1\leq x_2\leq x_1\}$;
\item[]$\Delta_3:=\{(x_1,x_2)\in Y:-x_2+1\leq x_1\leq x_2\}$;
\item[]$\Delta_4:=\{(x_1,x_2)\in Y:x_1\leq x_2\leq -x_1+1\}$.
\end{itemize}
Consider $\varphi\in\Aff_0(Y;\RR^3)$ given by 
$$
\varphi(x):=\psi(x)\nu\hbox{ avec }
\left\{
\begin{array}{ll}
\nu={\alpha(\xi_1\land \xi_2)\over|\xi_1\land \xi_2|}&\hbox{si on a (\ref{Possibility(i)Vect})}\\  
|\nu|=\alpha&\hbox{si on a (\ref{Possibility(ii)Vect})}\\
|\nu|=\alpha\hbox{ et }\langle \xi_1,\nu\rangle=0& \hbox{si on a (\ref{Possibility(iii)Vect})}\\
|\nu|=\alpha\hbox{ et }\langle \xi_2,\nu\rangle=0& \hbox{si on a (\ref{Possibility(iv)Vect}).}
\end{array}
\right.
$$
Then
$$
\xi+\nabla\varphi(x)=\left\{
\begin{array}{ll}
(\xi_1\mid \xi_2+\nu)&\hbox{si }x\in{\rm int}(\Delta_1)\\
(\xi_1-\nu\mid \xi_2)&\hbox{si }x\in{\rm int}(\Delta_2)\\
(\xi_1\mid \xi_2-\nu)&\hbox{si }x\in{\rm int}(\Delta_3)\\
(\xi_1+\nu\mid \xi_2)&\hbox{si }x\in{\rm int}(\Delta_4).
\end{array}
\right.
$$
Using Lemma \ref{FonsecaTheorem}(c) together with Lemma \ref{FonsecaLemma} we deduce that
\begin{eqnarray}\label{ZzZ}
\Z W_0(\xi)&\leq&{1\over 4}\left(\Z W_0(\xi_1\mid \xi_2+\nu)+\Z W_0(\xi_1-\nu\mid \xi_2)\right.\\
&&\left.+\ \Z W_0(\xi_1\mid \xi_2-\nu)+\Z W_0(\xi_1+\nu\mid \xi_2)\right).\nonumber
\end{eqnarray}
But
$
|\xi_1+(\xi_2+\nu)|^2=|(\xi_1+\xi_2)+\nu|^2
=|\xi_1+\xi_2|^2+|\nu|^2
=|\xi_1+\xi_2|^2+\alpha^2
\geq \alpha^2,
$
hence
$
|\xi_1+(\xi_2+\nu)|\geq \alpha.
$
Similarly, we obtain
$
|\xi_1-(\xi_2+\nu)|\geq \alpha,
$
and so
$$
\min\{|\xi_1+(\xi_2+\nu)|,|\xi_1-(\xi_2+\nu)|\}\geq \alpha.
$$
In the same manner, we have: 
\begin{itemize}
\item[]$\min\{|(\xi_1-\nu)+\xi_2|,|(\xi_1-\nu)-\xi_2|\}\geq\alpha$;
\item[]$\min\{|\xi_1+(\xi_2-\nu)|,|\xi_1-(\xi_2-\nu)|\}\geq\alpha$;
\item[]$\min\{|(\xi_1+\nu)+\xi_2|,|(\xi_1+\nu)-\xi_2|\}\geq\alpha$. 
\end{itemize}
As $\Z W_0$ satisfies (\ref{InTERConditioN}) it follows that
\begin{eqnarray*}
\Z W_0(\xi_1\mid \xi_2+\nu)&\leq&\gamma(1+|(\xi_1\mid \xi_2+\nu)|^p)\\
&\leq&\gamma 2^p(1+|(\xi_1\mid \xi_2)|^p+|(0\mid\nu)|^p)\\
&\leq&\max\{1,\alpha^p\}\gamma2^{p+1}(1+|\xi|^p).
\end{eqnarray*}
In the same manner, we obtain:
\begin{itemize}
\item[]$\Z W_0(\xi_1-\nu\mid \xi_2)\leq\max\{1,\alpha^p\}\gamma2^{p+1}(1+|\xi|^p)$;
\item[]$\Z W_0(\xi_1\mid \xi_2-\nu)\leq\max\{1,\alpha^p\}\gamma2^{p+1}(1+|\xi|^p)$;
\item[]$\Z W_0(\xi_1+\nu\mid \xi_2)\leq\max\{1,\alpha^p\}\gamma2^{p+1}(1+|\xi|^p)$,
\end{itemize} 
and from (\ref{ZzZ}) we conclude that $\Z W_0(\xi)\leq\max\{1,\alpha^p\}\gamma2^{p+1}(1+|\xi|^p)$, which proves (\ref{EndConDITIoN}). $\blacksquare$


\section{3d-2d passage theorems with determinant type constraints}

\subsection{Statement of the problem} Let $W:\MM^{3\times 3}\to[0,+\infty]$ be Borel measurable and $p$-coercive (with $p>1$) and, for each $\eps>0$, let $I_\eps:W^{1,p}(\Sigma_\eps;\RR^3)\to[0,+\infty]$ be defined by
$$
I_\eps(\phi):={1\over\eps}\int_{\Sigma_\eps}W(\nabla\phi(x,x_3))dxdx_3,
$$
where $\Sigma_\eps:=\Sigma\times]-{\eps\over 2},{\eps\over 2}[\subset\RR^3$ with $\Sigma\subset\RR^2$ Lipschitz, open and bounded, and a point of $\Sigma_\eps$ is denoted by $(x,x_3)$ with $x\in\Sigma$ and $x_3\in]-{\eps\over 2},{\eps\over 2}[$. The problem of 3d-2d passage is the following.
\begin{problem}\label{problem2}
Prove (or disprove) that
$$
\forall\psi\in W^{1,p}(\Sigma;\RR^3)\ \ \left(\Gamma(\pi)\hbox{-}\lim_{\eps\to 0}I_\eps\right)(\psi)=\int_\Sigma W_{\rm mem}(\nabla\psi(x))dx,
$$ 
where the symbol {\rm``$\Gamma(\pi)\hbox{-}\lim_{\eps\to 0}$"} stands for the $\Gamma(\pi)$-limit as $\eps\to 0$ (see Definition {\rm\ref{ABPdef}}), and find a representation formula for $W_{\rm mem}:\MM^{3\times 2}\to[0,+\infty]$.
\end{problem}
At the begining of the nineties, in \cite{ledret-raoult93,ledret-raoult95} Le Dret and Raoult answered to Problem \ref{problem2} in the case where $W$ is ``finite and without singularities" (see \S 3.3). Recently, in \cite{oah-jpm06,oah-jpm08b} we extended the Le Dret-Raoult theorem to the case where $W$ is compatible with the ``weak-Determinant constraint", i.e., (\ref{w-DC}), and the ``strong-Determinant Constraint", i.e., (\ref{s-DC}), as Theorem \ref{RD-w} and Theorem \ref{RD-s} (see \S 3.4 and \S 3.5).

\subsection{The $\Gamma(\pi)$-convergence} The concept of $\Gamma(\pi)$-convergence was introduced Anzellotti, Baldo and Percivale in order to deal with dimension reduction problems in mechanics. Let $\pi=\{\pi_\eps\}_\eps$ be the family of $L^p$-continuous maps $\pi_\eps:W^{1,p}(\Sigma_\eps;\RR^3)\to W^{1,p}(\Sigma;\RR^3)$ defined by
$$
\displaystyle\pi_\eps(\phi):={1\over\eps}\int_{-{\eps\over 2}}^{\eps\over 2}\phi(\cdot,x_3)dx_3.
$$

\begin{definition}[\cite{anzellotti-baldo-percivale94}]\label{ABPdef} We say that $\{I_\eps\}_{\eps}$ $\Gamma(\pi)$-converge to $I_{\rm mem}$ as $\eps$ goes to zero, and we write 
$$
I_{\rm mem}=\Gamma(\pi)\hbox{-}\lim\limits_{\eps\to 0} I_\eps,
$$ 
if and only if 
$$
\forall\psi\in W^{1,p}(\Sigma;\RR^3)\quad\left(\Gamma(\pi)\hbox{-}\liminf\limits_{\eps\to 0} I_\eps\right)(\psi)=\left(\Gamma(\pi)\hbox{-}\limsup\limits_{\eps\to 0} I_\eps\right)(\psi)=I_{\rm mem}(\psi)
$$
with $\Gamma(\pi)\hbox{-}\liminf\limits_{\eps\to 0} I_\eps, \Gamma(\pi)\hbox{-}\limsup\limits_{\eps\to 0} I_\eps:W^{1,p}(\Sigma;\RR^3)\to[0,+\infty]$ respectively given by{\rm:}
\begin{itemize}
\item[\SMALL$\blacklozenge$] $\left(\Gamma(\pi)\hbox{-}\liminf\limits_{\eps\to 0} I_\eps\right)(\psi):=\inf\left\{\liminf\limits_{\eps\to 0}I_\eps(\phi_\eps):\pi_\eps(\phi_\eps)\stackrel{L^p}{\to}\psi\right\};$
\item[\SMALL$\blacklozenge$] $\left(\Gamma(\pi)\hbox{-}\limsup\limits_{\eps\to 0} I_\eps\right)(\psi):=\inf\left\{\limsup\limits_{\eps\to 0}I_\eps(\phi_\eps):\pi_\eps(\phi_\eps)\stackrel{L^p}{\to}\psi\right\}$.
\end{itemize}
\end{definition}
Anzellotti, Baldo and Percivale proved that their concept of $\Gamma(\pi)$-convergence is not far from that of $\Gamma$-convergence introduced by De Giorgi and Franzoni.  For each $\eps>0$, consider $\mathcal{I}_\eps:W^{1,p}(\Sigma;\RR^3)\to[0,+\infty]$ defined by
$$
\mathcal{I}_\eps(\psi):=\inf\Big\{I_\eps(\phi):\pi_\eps(\phi)=\psi\Big\}.
$$
\begin{definition}[\cite{degiorgi-franzoni75,degiorgi75}] We say that $\{\mathcal{I}_\eps\}_{\eps}$ $\Gamma$-converge to $I_{\rm mem}$ as $\eps$ goes to zero, and we write 
$$
I_{\rm mem}=\Gamma\hbox{-}\lim\limits_{\eps\to 0} \mathcal{I}_\eps,
$$
if and only if
$$
\forall\psi\in W^{1,p}(\Sigma;\RR^3)\quad\left(\Gamma\hbox{-}\liminf\limits_{\eps\to 0} \mathcal{I}_\eps\right)(\psi)=\left(\Gamma\hbox{-}\limsup\limits_{\eps\to 0} \mathcal{I}_\eps\right)(\psi)=I_{\rm mem}(\psi)
$$
with $\Gamma\hbox{-}\liminf\limits_{\eps\to 0} \mathcal{I}_\eps, \Gamma\hbox{-}\limsup\limits_{\eps\to 0} \mathcal{I}_\eps:W^{1,p}(\Sigma;\RR^3)\to[0,+\infty]$ respectively given by{\rm:}
\begin{itemize}
\item[\SMALL$\blacklozenge$] $\Gamma\hbox{-}\liminf\limits_{\eps\to 0} \mathcal{I}_\eps(\psi):=\inf\left\{\liminf\limits_{\eps\to 0}\mathcal{I}_\eps(\psi_\eps):\psi_\eps\stackrel{L^p}{\to}\psi\right\};$
\item[\SMALL$\blacklozenge$] $\Gamma\hbox{-}\limsup\limits_{\eps\to 0} \mathcal{I}_\eps(\psi):=\inf\left\{\limsup\limits_{\eps\to 0}\mathcal{I}_\eps(\psi_\eps):\psi_\eps\stackrel{L^p}{\to}\psi\right\}.$
\end{itemize}
\end{definition}
The link between $\Gamma(\pi)$-convergence and $\Gamma$-convergence is given by the following lemma. 
\begin{lemma}[\cite{anzellotti-baldo-percivale94}]\label{ABP}

$
I_{\rm mem}=\Gamma(\pi)\hbox{-}\lim\limits_{\eps\to 0} I_\eps$ if and only if $I_{\rm mem}=\Gamma\hbox{-}\lim\limits_{\eps\to 0} \mathcal{I}_\eps
$.
\end{lemma}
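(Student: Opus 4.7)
The plan is to prove the two pointwise identities
$$
\Gamma(\pi)\hbox{-}\liminf_{\eps\to 0} I_\eps = \Gamma\hbox{-}\liminf_{\eps\to 0}\mathcal{I}_\eps \quad\text{and}\quad \Gamma(\pi)\hbox{-}\limsup_{\eps\to 0} I_\eps = \Gamma\hbox{-}\limsup_{\eps\to 0}\mathcal{I}_\eps
$$
on $W^{1,p}(\Sigma;\RR^3)$. Once these are established the equivalence of the two notions of convergence follows immediately, since each of $\Gamma(\pi)\hbox{-}\lim$ and $\Gamma\hbox{-}\lim$ is just the coincidence of the corresponding liminf and limsup. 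Observe also that for every $\eta\in W^{1,p}(\Sigma;\RR^3)$ the function $\phi(x,x_3):=\eta(x)$ lies in $W^{1,p}(\Sigma_\eps;\RR^3)$ and satisfies $\pi_\eps(\phi)=\eta$; hence the fiber $\pi_\eps^{-1}(\eta)$ over which the infimum in the definition of $\mathcal{I}_\eps(\eta)$ is taken is never empty, and $\mathcal{I}_\eps$ is genuinely a functional from $W^{1,p}(\Sigma;\RR^3)$ to $[0,+\infty]$.

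Fix $\psi\in W^{1,p}(\Sigma;\RR^3)$. For $\Gamma\hbox{-}\liminf\mathcal{I}_\eps(\psi)\leq\Gamma(\pi)\hbox{-}\liminf I_\eps(\psi)$, I take any sequence $\phi_\eps$ with $\pi_\eps(\phi_\eps)\stackrel{L^p}{\to}\psi$ and set $\psi_\eps:=\pi_\eps(\phi_\eps)$; by the definition of $\mathcal{I}_\eps$ one has $\mathcal{I}_\eps(\psi_\eps)\leq I_\eps(\phi_\eps)$, and $\psi_\eps\to\psi$ in $L^p$, so $\psi_\eps$ is admissible in the $\Gamma$-liminf of $\mathcal{I}_\eps$ and $\liminf\mathcal{I}_\eps(\psi_\eps)\leq\liminf I_\eps(\phi_\eps)$. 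Conversely, for $\Gamma(\pi)\hbox{-}\liminf I_\eps(\psi)\leq\Gamma\hbox{-}\liminf\mathcal{I}_\eps(\psi)$ I take any $\psi_\eps\stackrel{L^p}{\to}\psi$, and for each $\eps$ pick $\phi_\eps\in\pi_\eps^{-1}(\psi_\eps)$ with $I_\eps(\phi_\eps)\leq\mathcal{I}_\eps(\psi_\eps)+\eps$ (any element of the fiber works if $\mathcal{I}_\eps(\psi_\eps)=+\infty$). Then $\pi_\eps(\phi_\eps)=\psi_\eps\to\psi$, so $\phi_\eps$ is admissible in the $\Gamma(\pi)$-liminf of $I_\eps$, and $\liminf I_\eps(\phi_\eps)\leq\liminf\mathcal{I}_\eps(\psi_\eps)$. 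Taking the infimum over the competing sequences in each case yields the liminf identity.

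The limsup identity is obtained by literally the same two arguments with every $\liminf$ replaced by $\limsup$; the approximate-minimizer step goes through unchanged because $\limsup(\mathcal{I}_\eps(\psi_\eps)+\eps)=\limsup\mathcal{I}_\eps(\psi_\eps)$. There is no serious obstacle here: the proof is essentially a bookkeeping exercise exploiting surjectivity of $\pi_\eps$ together with the fact that $\mathcal{I}_\eps$ is by construction the fiber-wise infimum of $I_\eps$, and the only very small subtlety is the treatment of indices $\eps$ for which $\mathcal{I}_\eps(\psi_\eps)=+\infty$, handled above by allowing any element of the fiber.
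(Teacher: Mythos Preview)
Your argument is correct and is exactly the standard one: identify the $\Gamma(\pi)$-lower and upper limits of $I_\eps$ with the $\Gamma$-lower and upper limits of the fiberwise infimum $\mathcal{I}_\eps$ by pushing competitors back and forth along $\pi_\eps$, using surjectivity of $\pi_\eps$ and the approximate-minimizer trick. The paper itself does not prove this lemma; it is simply quoted from \cite{anzellotti-baldo-percivale94}, so there is no paper proof to compare against, and your write-up would serve as a self-contained justification.
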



\subsection{$\Gamma(\pi)$-convergence of $I_\eps$: finite case}

Let $W_0:\MM^{3\times 2}\to[0,+\infty]$ be defined by
$$
W_0(\xi):=\inf\limits_{\zeta\in\RR^3}W(\xi\mid\zeta).
$$
\begin{theorem}[\cite{ledret-raoult93,ledret-raoult95}]\label{LeDret-Raoult-Theorem} If $W$ is continuous and 
$$
\exists c>0\ \forall F\in\MM^{3\times 3}\ W(F)\leq c(1+|F|^p)
$$
then
$$
\forall\psi\in W^{1,p}(\Sigma;\RR^3)\ \ \Gamma(\pi)\hbox{-}\lim\limits_{\eps\to 0}I_\eps(\psi)=\int_\Sigma\Q W_0(\nabla\psi(x))dx.
$$
\end{theorem}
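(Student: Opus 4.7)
The plan is to prove the two matching inequalities
$$
\Gamma(\pi)\hbox{-}\liminf_{\eps\to 0}I_\eps(\psi)\ge \int_\Sigma \Q W_0(\nabla\psi(x))dx\ \hbox{ and }\ \Gamma(\pi)\hbox{-}\limsup_{\eps\to 0}I_\eps(\psi)\le \int_\Sigma \Q W_0(\nabla\psi(x))dx
$$
for every $\psi\in W^{1,p}(\Sigma;\RR^3)$, after first rescaling the thin domain to one of fixed height: setting $\tilde\phi_\eps(x,y_3):=\phi_\eps(x,\eps y_3)$ on $\Sigma\times Y'$ with $Y':=]-1/2,1/2[$, one has
$$
I_\eps(\phi_\eps)=\int_{\Sigma\times Y'} W\Bigl(\nabla_x\tilde\phi_\eps(x,y_3)\Bigm| \tfrac{1}{\eps}\partial_{y_3}\tilde\phi_\eps(x,y_3)\Bigr)dxdy_3,
$$
while $\pi_\eps(\phi_\eps)(x)=\int_{Y'}\tilde\phi_\eps(x,y_3)dy_3$. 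Since $W_0(\xi)\le W(\xi\mid 0)\le c(1+|\xi|^p)$, both $W_0$ and $\Q W_0$ have $p$-polynomial growth.

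For the liminf inequality, take $\phi_\eps$ with $\pi_\eps(\phi_\eps)\to\psi$ in $L^p$ and $\liminf_\eps I_\eps(\phi_\eps)<+\infty$. The $p$-coercivity of $W$ gives that $\nabla_x\tilde\phi_\eps$ is bounded in $L^p(\Sigma\times Y';\MM^{3\times 2})$ and $\|\partial_{y_3}\tilde\phi_\eps\|_{L^p}=\eps\|\partial_{x_3}\phi_\eps\|_{L^p}\to 0$. Along a subsequence, $\tilde\phi_\eps\wto\tilde\psi$ in $W^{1,p}(\Sigma\times Y';\RR^3)$ with $\partial_{y_3}\tilde\psi=0$, and the convergence $\pi_\eps(\phi_\eps)\to\psi$ forces $\tilde\psi(x,y_3)=\psi(x)$. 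Using the pointwise inequality $W(\xi\mid\zeta)\ge W_0(\xi)\ge \Q W_0(\xi)$,
$$
I_\eps(\phi_\eps)\ge \int_{\Sigma\times Y'}\Q W_0\bigl(\nabla_x\tilde\phi_\eps(x,y_3)\bigr)dxdy_3;
$$
since $\Q W_0$ is quasiconvex with $p$-polynomial growth, the standard Acerbi-Fusco weak lower semicontinuity on $W^{1,p}$ gives $\liminf_\eps I_\eps(\phi_\eps)\ge\int_\Sigma\Q W_0(\nabla\psi)dx$.

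For the limsup inequality, the strategy is: (a) construct a recovery sequence when $\psi$ is affine, and (b) paste affine building blocks and invoke density to cover all $\psi\in W^{1,p}(\Sigma;\RR^3)$. For (a), given $\psi(x)=\xi x$ and $\delta>0$, the cell formula for the quasiconvex envelope (Theorem \ref{DacorognaTheorem1} applied after a preliminary regularisation of the upper semicontinuous $W_0$) supplies $\varphi\in W^{1,\infty}_0(Y;\RR^3)$, with $Y=]0,1[^2$, such that $\int_Y W_0(\xi+\nabla\varphi)dy\le\Q W_0(\xi)+\delta$; for each $y\in Y$, pick by a Borel measurable selection (valid because $W$ is continuous and coercive in the third column) a $\zeta(y)\in\RR^3$ with $W(\xi+\nabla\varphi(y)\mid \zeta(y))\le W_0(\xi+\nabla\varphi(y))+\delta$, and mollify $\zeta$ to a smooth $\zeta^k$. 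Tiling $\Sigma$ with cells $a+\eta Y$ at period $\eta\gg\eps$, set
$$
\phi_\eps^{\eta,k}(x,x_3):=\xi x+\eta\,\varphi\bigl((x-a)/\eta\bigr)+x_3\,\zeta^k\bigl((x-a)/\eta\bigr)\ \hbox{ on each cell;}
$$
then, up to thin gluing regions around the skeleton whose energy contribution vanishes with the mesh (by polynomial growth), $\pi_\eps(\phi_\eps^{\eta,k})\to \xi x$ in $L^p$ and $I_\eps(\phi_\eps^{\eta,k})\to|\Sigma|\int_YW(\xi+\nabla\varphi\mid\zeta^k)dy$, which after $k\to\infty$ and $\delta\to 0$ is at most $|\Sigma|\Q W_0(\xi)$. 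A standard diagonal extraction gives the required sequence. For (b), piecewise affine $\psi$ is handled by pasting block recovery sequences, and the $L^p$-lower semicontinuity of the $\Gamma(\pi)\hbox{-}\limsup$ combined with strong $W^{1,p}$-density of piecewise affine maps and continuity of $\psi\mapsto\int_\Sigma\Q W_0(\nabla\psi)$ on $W^{1,p}$ (both consequences of polynomial growth) complete the argument.

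The main obstacle is step (a): jointly passing to the limit in $\eps$, the cell period $\eta$, the mollification parameter of $\zeta$, and the relaxation parameter $\delta$, while preventing the cross-term $x_3\nabla_x\zeta^k/\eta$ in the first two columns of $\nabla_x\phi_\eps^{\eta,k}$ from contaminating the energy. This is where the slenderness $\eps\to 0$ is essentially exploited: one chooses $\eps/\eta\to 0$ to kill the cross-term, and one absorbs the non-continuity of the measurable third-column selector $\zeta$ via a diagonal argument coupling $\eps$, $\eta$, $k$ and $\delta$. The lower bound, by contrast, is almost immediate from the pointwise inequality $W\ge \Q W_0$ followed by standard $W^{1,p}$-weak lower semicontinuity.
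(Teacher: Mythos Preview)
The paper does not contain a proof of this theorem: it is quoted as an external result from \cite{ledret-raoult93,ledret-raoult95} and used as a black box (notably in the proof of Theorem~\ref{RD-w}). There is therefore no ``paper's own proof'' to compare against here.

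That said, your sketch follows the original Le~Dret--Raoult strategy closely and is sound in outline. Two remarks. First, under the stated hypotheses ($W$ continuous, $p$-coercive, $p$-growth) the reduced integrand $W_0(\xi)=\inf_\zeta W(\xi\mid\zeta)$ is actually \emph{continuous}, not merely upper semicontinuous: the infimum is attained at some $\zeta(\xi)$ with $|\zeta(\xi)|^p\le C(1+|\xi|^p)$, from which lower semicontinuity follows; so no ``preliminary regularisation'' of $W_0$ is needed before invoking Theorem~\ref{DacorognaTheorem1}. Second, the paper's own liminf argument (for the harder Theorem~\ref{RD-s}, see \S3.6.2) proceeds by a slicing/Fatou argument rather than Acerbi--Fusco on $\Sigma\times Y'$: one shows $\hat\phi_\eps(\cdot,x_3)\to\psi$ in $L^p(\Sigma)$ for a.e.\ $x_3$ and bounds from below by $\int_{-1/2}^{1/2}\liminf_\eps\int_\Sigma W_0(\nabla_x\hat\phi_\eps(\cdot,x_3))\,dx\,dx_3\ge\overline{\mathcal I}(\psi)$. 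Both routes work in the $p$-growth setting; the slicing argument has the advantage of not requiring quasiconvexity of the lower bound and extends to the non-finite case, whereas your Acerbi--Fusco route is shorter here but relies on $\Q W_0$ having $p$-growth.
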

Although the Le Dret-Raoult theorem is compatible neither with the ``weak-Deter-minant Constraint", i.e., (\ref{w-DC}) nor with the ``strong  Determinant Constraint", i.e., (\ref{s-DC}), it established a suitable variational framework to deal with dimensional reduction problems: it is the point of departure of many works on the subject.

\subsection{$\Gamma(\pi)$-convergence of $I_\eps$: ``weak-Determinant Constraint"} By using the Le Dret-Raoult theorem, i.e., Theorem \ref{LeDret-Raoult-Theorem}, we can prove the following result.

\begin{theorem}[\cite{oah-jpm06}]\label{RD-w}
If $W$ satisfies {\rm(\ref{D})}, i.e., 
$$
\exists\alpha,\beta>0\ \forall F\in\MM^{3\times 3}\ \big(|\det F|\geq\alpha\then W(F)\leq\beta(1+|F|^p)\big),
$$
then
$$
\forall\psi\in W^{1,p}(\Sigma;\RR^3)\quad\Gamma(\pi)\hbox{-}\displaystyle\lim\limits_{\eps\to 0}I_\eps(\psi)=\int_\Sigma\Q W_0(\nabla\psi(x))dx.
$$
\end{theorem}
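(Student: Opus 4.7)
The plan is to reduce to the finite-case Le Dret--Raoult theorem by working with the auxiliary density $\Z W$. By Lemma~\ref{LeMMa-wDC}, condition~(\ref{D}) implies $\Z W$ is of $p$-polynomial growth, hence finite, and Lemma~\ref{FonsecaTheorem}(b) then makes it continuous; Theorem~\ref{RTnon-finite-case}(ii) moreover identifies $\Z W=\Q W$. Setting $\Z I_\eps(\phi):=\frac{1}{\eps}\int_{\Sigma_\eps}\Z W(\nabla\phi)\,dx\,dx_3$, an application of Theorem~\ref{LeDret-Raoult-Theorem} to $\Z W$ yields
$$\Gamma(\pi)\hbox{-}\lim_{\eps\to 0}\Z I_\eps(\psi)=\int_\Sigma\Q\bigl((\Z W)_0\bigr)(\nabla\psi(x))\,dx.$$

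The first key step is then to identify this limit with $\int_\Sigma\Q W_0(\nabla\psi)\,dx$. Since $\Z W\leq W$, one direction is immediate: $(\Z W)_0\leq W_0$ so $\Q((\Z W)_0)\leq\Q W_0$. For the reverse, I would define $\tilde W_0:\MM^{3\times 3}\to[0,+\infty]$ by $\tilde W_0(\xi\mid\zeta):=\Q W_0(\xi)$ and show it is quasiconvex on $\MM^{3\times 3}$ by a slicing argument: for any $\phi\in W^{1,\infty}_0((0,1)^3;\RR^3)$ the restriction $\phi(\cdot,y_3)$ lies in $W^{1,\infty}_0((0,1)^2;\RR^3)$ for a.e.\ $y_3$, so quasiconvexity of $\Q W_0$ on $\MM^{3\times 2}$ applied slicewise and Fubini give the quasiconvexity inequality for $\tilde W_0$ on $(0,1)^3$. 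Since moreover $\tilde W_0(\xi\mid\zeta)\leq W_0(\xi)\leq W(\xi\mid\zeta)$, we conclude $\tilde W_0\leq\Q W=\Z W$, whence $\Q W_0(\xi)\leq\inf_\zeta\Z W(\xi\mid\zeta)=(\Z W)_0(\xi)$ and therefore $\Q W_0\leq\Q((\Z W)_0)$.

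With this identification in hand, the lower bound $\Gamma(\pi)\hbox{-}\liminf I_\eps\geq\int_\Sigma\Q W_0(\nabla\psi)\,dx$ follows at once from $\Z I_\eps\leq I_\eps$. For the upper bound, I would start with a recovery sequence $\phi_\eps$ for $\Z I_\eps$ (produced by the LR step) and upgrade it to one for $I_\eps$ by a Vitali-type gluing analogous to \S2.8.1: reduce first to $\phi_\eps$ piecewise affine (by density of $\Aff(\Sigma_\eps;\RR^3)$ and the polynomial growth of $\Z W$), then on each affine cell $V$ of gradient $F$ insert, via a Vitali cover of $V$ by cubes $a_j+\alpha_jY$, rescaled perturbations $\alpha_j\varphi_F\bigl((\cdot-a_j)/\alpha_j\bigr)$ with $\varphi_F\in\Aff_0(Y;\RR^3)$ achieving $\int_YW(F+\nabla\varphi_F)\,dy\leq\Z W(F)+\delta_V$, where I choose $\delta_V\propto |V|/|\Sigma|$ so that after summation the $1/\eps$ factor in $I_\eps$ is absorbed and $I_\eps(\tilde\phi_\eps)\leq\Z I_\eps(\phi_\eps)+\delta|\Sigma|$. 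The $O(\max_j\alpha_j)$ $L^\infty$-size of the perturbation preserves $\pi_\eps(\tilde\phi_\eps)\to\psi$, and a diagonal extraction as $\delta\to 0$ concludes.

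The main obstacle is the Vitali gluing inside the thin slab $\Sigma_\eps$: the sides $\alpha_j$ of the cubes must be smaller than $\eps$ in order to fit in the $x_3$-direction, which couples the two length-scales and forces a joint diagonal choice $\alpha=\alpha(\eps,\delta)\to 0$; the per-cell error budget must also be scaled as $\delta|V|/|\Sigma|$ (not $\delta|V|/|\Sigma_\eps|$) to survive the $1/\eps$ prefactor. A secondary delicate point is the quasiconvexity of the lift $\tilde W_0$: it exploits the favourable direction of slicing (zero boundary values on $\partial (0,1)^3$ project to zero values on the horizontal sections) and would not work if one tried to suppress the first two columns of $F$ rather than the third.
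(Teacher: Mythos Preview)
Your proposal is essentially correct but takes a harder route than the paper for the upper bound. The paper's key observation is that the $\Gamma(\pi)$-limit is \emph{stable under relaxation}: since each $\pi_\eps$ is $L^p$-continuous (with $\|\pi_\eps(\phi)\|_{L^p(\Sigma)}\leq\eps^{-1/p}\|\phi\|_{L^p(\Sigma_\eps)}$), a short diagonal argument gives $\Gamma(\pi)\hbox{-}\lim_{\eps\to0}I_\eps=\Gamma(\pi)\hbox{-}\lim_{\eps\to0}\overline{I}_\eps$. One then applies the relaxation Theorem~\ref{TheoremA-B} on each \emph{fixed} domain $\Sigma_\eps$ to obtain $\overline{I}_\eps(\phi)=\frac{1}{\eps}\int_{\Sigma_\eps}\Q W(\nabla\phi)$, and finally the Le Dret--Raoult theorem to $\Q W$. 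This completely decouples the relaxation step from the dimension-reduction step: no Vitali gluing inside the thin slab is needed, and the obstacle you identify (cube sides forced below $\eps$, joint diagonalisation in $(\eps,\delta)$) simply evaporates.

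Your identification $\Q\bigl((\Z W)_0\bigr)=\Q W_0$ via the quasiconvex lift $\tilde W_0(\xi\mid\zeta):=\Q W_0(\xi)$ is correct and is essentially the same slicing idea as the paper's Lemma~\ref{RD-Lemma1} (which proves $\Z_\infty[\Z_\infty W]_0=\Z_\infty W_0$ by fixing $x_3$ and using $\varphi(\cdot,x_3)\in W^{1,\infty}_0((0,1)^2;\RR^3)$). One minor remark on your upper-bound bookkeeping: there is no need to scale $\delta_V\propto|V|/|\Sigma|$; a constant $\delta$ per cell already gives total error $\frac{1}{\eps}\sum_V|V|\delta=\frac{|\Sigma_\eps|}{\eps}\delta=|\Sigma|\delta$, so the $1/\eps$ prefactor is absorbed automatically by $|\Sigma_\eps|=\eps|\Sigma|$. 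Your approach does work, but the paper's abstract stability argument is both shorter and more robust (it generalises immediately to arbitrary $p$-ample families $\{W_\eps\}_\eps$, as the paper records right after the proof).
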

\begin{proof}

As the $\Gamma(\pi)$-limit is stable by substituting $I_\eps$ by its relaxed functional $\overline{I}_\eps$, i.e., $\overline{I}_\eps:W^{1,p}(\Sigma_\eps;\RR^3)\to[0,+\infty]$ given by
\begin{eqnarray*}
\overline{I}_\eps(\phi)&:=&\inf\left\{\liminf\limits_{n\to+\infty}I_\eps(\phi_n):\phi_n\stackrel{L^p}{\to}\phi\right\}\\
&=&{1\over\eps}\inf\left\{\liminf\limits_{n\to+\infty}\int_{\Sigma_\eps}W(\nabla\phi_n)dxdx_3:\phi_n\stackrel{L^p}{\to}\phi\right\},
\end{eqnarray*}
it suffices to prove that
\begin{equation}\label{ConCLUSioN}
\forall\psi\in W^{1,p}(\Sigma;\RR^3)\quad\Gamma(\pi)\hbox{-}\displaystyle\lim\limits_{\eps\to 0}\overline{I}_\eps(\psi)=\int_\Sigma\Q W_0(\nabla\psi(x))dx. 
\end{equation}

As $W$ satisfies (\ref{D}) it is $p$-ample (see Definition \ref{Ample-Integrands}), and so by Theorem \ref{TheoremA-B} we have 
\begin{equation}\label{ConCLUSioN1}
\forall\eps>0\ \forall \phi\in W^{1,p}(\Sigma_\eps;\RR^3)\quad\overline{I}_\eps(\phi)={1\over\eps}\int_{\Sigma_\eps}\Q W(\nabla\phi(x,x_3))dxdx_3
\end{equation}
with $\Q W=\Z_\infty W$ (which is of $p$-polynomial growth and so continuous by Lemma \ref{FonsecaTheorem}(b)). Applying the Le Dret-Raoult theorem, i.e., Theorem \ref{LeDret-Raoult-Theorem}, we deduce that
$$
\forall \psi\in W^{1,p}(\Sigma;\RR^3)\quad\Gamma(\pi)\hbox{-}\lim_{\eps\to 0}\overline{I}_\eps(\psi)=\int_{\Sigma}\Q[\Q W]_0(\nabla\psi(x))dx
$$
with $[\Q W]_0:\MM^{3\times 2}\to[0,+\infty]$ given by
$$
[\Q W]_0(\xi):=\inf_{\zeta\in\RR^3}\Q W(\xi\mid\zeta).
$$

On the other hand, one can establish the following lemma (whose proof is given in \S 3.6.1).
\begin{lemma}\label{RD-Lemma1}
$\Q[\Q W]_0=\Q W_0$.
\end{lemma} 
Which gives (\ref{ConCLUSioN}) when combined with (\ref{ConCLUSioN1}), and the proof is complete.
\end{proof}

\medskip

Theorem \ref{RD-w} highlights the fact that the concept of $p$-amplitude has a ``nice" behavior with respect to the $\Gamma(\pi)$-convergence. More generally, let $\{\pi_\eps\}_\eps$ be a family of $L^p$-continuous maps $\pi_\eps$ from $W^{1,p}(\Sigma_\eps;\RR^m)$ to $W^{1,p}(\Sigma;\RR^m)$, where $\Sigma_\eps\subset\RR^N$ (resp. $\Sigma\subset\RR^k$ with $k\in\NN^*$) is a bounded open set,  let $\{W_\eps\}_{\eps}$ be an uniformly $p$-coercive family of measurable integrands $W_\eps:\MM^{m\times N}\to[0,+\infty]$ and, for each $\eps>0$, let $I_\eps, \Q I_\eps:W^{1,p}(\Sigma_\eps;\RR^m)\to[0,+\infty]$ be respectively defined by
\begin{itemize}
\item[\SMALL$\blacklozenge$] $\displaystyle I_\eps(\phi):=\int_{\Sigma_\eps}W_\eps(\nabla\phi(x))dx;$
\item[\SMALL$\blacklozenge$] $\displaystyle \Q I_\eps(\phi):=\int_{\Sigma_\eps}\Q W_\eps(\nabla\phi(x))dx$.
\end{itemize}
The following theorem says that the $\Gamma(\pi)$-limit is stable by substituting $I_\eps$ by $\Q I_\eps$ whenever every $W_\eps$ is $p$-ample.
\begin{theorem}
Assume that{\rm:}
\begin{itemize}
\item[\SMALL$\blacklozenge$] $\forall\eps>0$ $W_\eps$ is $p$-ample{\rm;}
\item[\SMALL$\blacklozenge$] $\exists I_0:W^{1,p}(\Sigma;\RR^m)\to[0,+\infty]$ $\Gamma(\pi)\hbox{-}\lim\limits_{\eps\to 0}\Q I_\eps=I_0$.
\end{itemize}
Then $\Gamma(\pi)\hbox{-}\lim\limits_{\eps\to 0} I_\eps=I_0$.
\end{theorem}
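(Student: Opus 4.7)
The plan is to reduce the statement to the case where $I_\eps$ has already been replaced by its $L^p$-relaxation $\overline{I}_\eps$, and to identify that relaxation with $\Q I_\eps$ using the $p$-ampleness hypothesis and Theorem \ref{TheoremA-B}.

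First I would fix $\eps > 0$ and invoke Theorem \ref{TheoremA-B} applied to $W_\eps$ on the domain $\Sigma_\eps$: since $W_\eps$ is $p$-ample, one obtains
\[
\overline{I}_\eps(\phi) = \int_{\Sigma_\eps} \Q W_\eps(\nabla\phi(x))\,dx = \Q I_\eps(\phi)
\]
for every $\phi \in W^{1,p}(\Sigma_\eps;\RR^m)$, where $\overline{I}_\eps$ denotes the $L^p$-lower semicontinuous envelope of $I_\eps$. Thus, functional by functional, the quasiconvexified integral and the relaxation of $I_\eps$ coincide.

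Next I would establish the stability of the $\Gamma(\pi)$-limit under $L^p$-relaxation, i.e. the identity $\Gamma(\pi)\hbox{-}\lim_{\eps\to 0} I_\eps = \Gamma(\pi)\hbox{-}\lim_{\eps\to 0}\overline{I}_\eps$ whenever one of them exists. This is the same principle invoked at the very beginning of the proof of Theorem \ref{RD-w}. Its justification rests on two observations. On one hand, $\overline{I}_\eps \leq I_\eps$ gives immediately $\Gamma(\pi)\hbox{-}\liminf_{\eps\to 0} \overline{I}_\eps \leq \Gamma(\pi)\hbox{-}\liminf_{\eps\to 0} I_\eps$ and the analogous limsup inequality. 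On the other hand, given any admissible sequence $\phi_\eps$ for $\overline{I}_\eps$ (i.e. with $\pi_\eps(\phi_\eps)\stackrel{L^p}{\to}\psi$), the definition of relaxation produces, for each $\eps$, a sequence $\{\phi_{\eps,n}\}_n$ with $\phi_{\eps,n}\stackrel{L^p}{\to}\phi_\eps$ and $I_\eps(\phi_{\eps,n}) \to \overline{I}_\eps(\phi_\eps)$; the $L^p$-continuity of $\pi_\eps$ then gives $\pi_\eps(\phi_{\eps,n}) \stackrel{L^p}{\to} \pi_\eps(\phi_\eps)$, and a standard diagonal extraction provides $n(\eps)$ for which $\pi_\eps(\phi_{\eps,n(\eps)})\stackrel{L^p}{\to}\psi$ and $I_\eps(\phi_{\eps,n(\eps)}) - \overline{I}_\eps(\phi_\eps) \to 0$. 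This yields an admissible sequence for $I_\eps$ realising the same liminf/limsup bound.

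Combining the two steps with the hypothesis $\Gamma(\pi)\hbox{-}\lim_{\eps\to 0} \Q I_\eps = I_0$ yields
\[
\Gamma(\pi)\hbox{-}\lim_{\eps\to 0} I_\eps = \Gamma(\pi)\hbox{-}\lim_{\eps\to 0} \overline{I}_\eps = \Gamma(\pi)\hbox{-}\lim_{\eps\to 0} \Q I_\eps = I_0.
\]
The only delicate point is the diagonal extraction in the stability step, which relies crucially on the $L^p$-continuity of the family $\{\pi_\eps\}_\eps$; the remainder is a direct application of Theorem \ref{TheoremA-B} together with the definitions of $\Gamma(\pi)\hbox{-}\liminf$ and $\Gamma(\pi)\hbox{-}\limsup$.
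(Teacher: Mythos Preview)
Your proposal is correct and follows essentially the same approach as the paper: first identify $\overline{I}_\eps=\Q I_\eps$ via Theorem \ref{TheoremA-B}, then use the $L^p$-continuity of each $\pi_\eps$ to conclude that $\Gamma(\pi)\hbox{-}\liminf_{\eps\to0} I_\eps=\Gamma(\pi)\hbox{-}\liminf_{\eps\to0} \overline{I}_\eps$ and likewise for the $\limsup$. The paper merely asserts this stability step as ``easy to see'', whereas you spell out the diagonal-extraction argument behind it.
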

\begin{proof}
As every $W_\eps$ is $p$-ample, from Theorem \ref{TheoremA-B} we deduce that $\overline{I}_\eps=\Q I_\eps$ for all $\eps>0$. On the other hand, as every $\pi_\eps$ is $L^p$-continuous, it is easy to see that  $\Gamma(\pi)$-$\liminf_{\eps\to0} I_\eps=\Gamma(\pi)$-$\liminf_{\eps\to0} \overline{I}_\eps$ and $\Gamma(\pi)$-$\limsup_{\eps\to0} I_\eps=\Gamma(\pi)$-$\limsup_{\eps\to0} \overline{I}_\eps$, and the theorem follows.
\end{proof}

\subsection{$\Gamma(\pi)$-convergence of $I_\eps$: ``strong-Determinant Constraint"}
The following theorem gives an answer to Problem \ref{problem2} in the framework of hyperelasticity (it is consistent with the ``strong-Determinant Constraint", i.e., (\ref{s-DC})) in the same spirit as the works of Ball (see \cite{Ball77}), Acerbi-Buttazzo-Percivale (see \cite{ABP91}) and  Friesecke-James-M\"uller (see \cite{FJM02}). It is the result of several works on the subject: mainly, the attempt of Percivale in 1991 (see \cite{percivale91}), the rigorous answer to Problem \ref{problem2} by Le Dret and Raoult in the $p$-polynomial growth case (see \cite{ledret-raoult93,ledret-raoult95}) and especially the substantial contributions of Ben Belgacem (see \cite{benbelgacem96,benbelgacem97,benbelgacem00}).

\begin{theorem}[\cite{oah-jpm08b}]\label{RD-s}
Assume that{\rm:}
\begin{eqnarray}
 &&W\hbox{ is continuous\hbox{\rm ;}}\label{D0}\\
 &&W(F)=+\infty\iff\det F\leq 0;\label{D1}\\
&&\forall\delta>0\ \exists c_\delta>0\ \forall F\in\MM^{3\times 3}\big(\det F\geq\delta\then W(F)\leq c_\delta(1+|F|^p)\big).\label{D2}
\end{eqnarray}
Then
$$
\forall\psi\in W^{1,p}(\Sigma;\RR^3)\quad\Gamma(\pi)\hbox{-}\displaystyle\lim\limits_{\eps\to 0}I_\eps(\psi)=\int_\Sigma\Q W_0(\nabla\psi(x))dx.
$$
\end{theorem}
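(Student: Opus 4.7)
The plan is to derive Theorem \ref{RD-s} from Theorem \ref{RD-w} (the weak-Determinant case) via a monotone truncation of $W$ from below.

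\textbf{Truncation and the lower bound.} For each $\delta > 0$ define $W_\delta:=W\wedge c_\delta(1+|\cdot|^p)$, with $c_\delta$ as in (\ref{D2}) chosen nondecreasing in $1/\delta$. By (\ref{D0}), $W_\delta$ is continuous, $p$-coercive, and globally $p$-polynomially bounded, hence satisfies (\ref{D}) trivially; moreover $W_\delta = W$ on $\{\det F\geq\delta\}$ and $W_\delta\nearrow W$ pointwise. Theorem \ref{RD-w} applied to $W_\delta$ gives
$$\Gamma(\pi)\hbox{-}\lim_{\eps\to 0}I_\eps^\delta(\psi)=\int_\Sigma\Q(W_\delta)_0(\nabla\psi(x))\,dx.$$
Since $I_\eps\geq I_\eps^\delta$, we get $\Gamma(\pi)\hbox{-}\liminf_\eps I_\eps(\psi)\geq\int_\Sigma\Q(W_\delta)_0(\nabla\psi)\,dx$, and taking $\sup_\delta$ together with monotone convergence produces the lower bound, provided one proves the pointwise identity $\sup_\delta\Q(W_\delta)_0=\Q W_0$. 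I would first check $(W_\delta)_0\nearrow W_0$: when $\xi_1\wedge\xi_2\neq 0$, any near-minimizer $\zeta$ of $W_0(\xi)$ satisfies $\det(\xi|\zeta)>0$, hence $\det(\xi|\zeta)\geq\delta$ for small $\delta$, so $W_\delta(\xi|\zeta)=W(\xi|\zeta)$; when $\xi_1\wedge\xi_2=0$, $W(\xi|\zeta)=+\infty$ for all $\zeta$ and $(W_\delta)_0(\xi)=c_\delta(1+|\xi|^p)\nearrow+\infty=W_0(\xi)$. Then the increasing limit of the quasiconvex (hence continuous) functions $\Q(W_\delta)_0$ is quasiconvex, and a Young-measure/integral-representation argument (using $p$-polynomial growth of $\Q(W_\delta)_0$) identifies it with $\Q W_0$.

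\textbf{Upper bound.} Monotone truncation is inadequate here because $I_\eps\geq I_\eps^\delta$ is the wrong direction, and a genuine recovery sequence for $W$ must satisfy $\det\nabla\phi_\eps>0$ throughout $\Sigma_\eps$. Assuming $\int_\Sigma\Q W_0(\nabla\psi)\,dx<+\infty$, I would (i) reduce by density to $\psi$ piecewise affine with $|\nabla\psi_1\wedge\nabla\psi_2|$ bounded below on $\Sigma$ (this is justified by the $p$-polynomial growth of $W_0$ on $\{|\xi_1\wedge\xi_2|\geq\alpha\}$, itself a consequence of (\ref{D2}) applied to $\zeta=(\xi_1\wedge\xi_2)/|\xi_1\wedge\xi_2|$); (ii) on each affine piece $\nabla\psi=\xi$, select $\zeta=\zeta(\xi)$ nearly optimal for $\inf_\zeta W(\xi|\zeta)=W_0(\xi)$ with $\det(\xi|\zeta)>0$, and construct $\phi_\eps(x,x_3):=\psi(x)+x_3\zeta(x)+\eps\,\eta(x/\eps)$, where $\eta$ is a periodic two-dimensional quasiconvexification corrector realizing $\Q W_0(\xi)$ from $W_0$ (of the kind built in the proof of Lemma \ref{R-Lemma3} via Lemma \ref{FonsecaLemma}). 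The corrector $\eta$ and the transverse field $\zeta$ must be engineered so that $\det\nabla\phi_\eps$ is uniformly positive on $\Sigma_\eps$, at the cost of a controlled perturbation vanishing in the limit.

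\textbf{Main obstacle.} The hard part is exactly this last point: producing a recovery sequence that respects the pointwise non-interpenetration constraint $\det>0$ while attaining the optimal energy $\Q W_0$. The Le Dret--Raoult construction is insensitive to the sign of $\det$, so it cannot simply be transplanted; the Ben Belgacem--type perturbation and glueing arguments, highlighted in the introduction, are essential both within each affine piece (to enforce $\det(\xi+\nabla_x[\eps\eta(x/\eps)]\mid\zeta)>0$) and across interfaces between affine pieces. A secondary nontrivial issue is the identity $\sup_\delta\Q(W_\delta)_0=\Q W_0$ used in the lower bound, since quasiconvexification does not in general commute with pointwise monotone limits and its proof must go through the integral-representation framework of Theorem \ref{DacorognaTheorem2} rather than pointwise manipulation.
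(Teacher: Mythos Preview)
Your overall strategy---truncate $W$ by $W_\delta$, apply Theorem~\ref{RD-w}, and pass to the limit in $\delta$---is precisely the open question the authors pose immediately after the proof of Theorem~\ref{RD-s}; it is not known to work. For the liminf the paper avoids truncation entirely: a short slicing-and-Fatou argument (Lemma~\ref{RD-Lemma2}) gives $\Gamma(\pi)\hbox{-}\liminf_\eps I_\eps\geq\overline{\mathcal I}$, and the identification $\overline{\mathcal I}(\psi)=\int_\Sigma\Q W_0(\nabla\psi)\,dx$ comes from Corollary~\ref{corollary1}, itself a consequence of the $p$-amplitude of $W_0$ (Lemma~\ref{R-Lemma3}). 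Your identity $\sup_\delta\Q(W_\delta)_0=\Q W_0$ is unproved and is exactly the kind of commutation of quasiconvexification with a monotone pointwise limit that one cannot take for granted.

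The limsup is where your outline has a genuine gap. A near-optimal $\eta\in W^{1,\infty}_0(Y;\RR^3)$ for $\Z_\infty W_0(\xi)$ carries no lower bound on $|(\xi_1+\partial_1\eta)\wedge(\xi_2+\partial_2\eta)|$, so there is no mechanism to ``engineer'' $\eta$ and $\zeta$ to make $\det\nabla\phi_\eps>0$ on $\Sigma_\eps$; your step~(ii) is a wish, not a construction. The paper does not perturb a quasiconvex corrector. It routes the limsup through the \emph{rank-one} envelope: by induction on the Kohn--Strang formula it shows that each successive rank-one lamination can be realised by a \emph{locally injective} piecewise-affine map (Lemma~\ref{LeMmA-A7}(i)), yielding $\overline{\mathcal I}_{\rm aff_{\rm li}^{\rm reg}}\leq\overline{\R\mathcal I}_{\rm aff_{\rm li}^{\rm reg}}$. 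The chain $\overline{\mathcal I}_{\rm diff_*}\leq\overline{\mathcal I}_{\rm aff_{\rm li}^{\rm reg}}\leq\overline{\R\mathcal I}_{\rm aff_{\rm li}^{\rm reg}}\leq\overline{\R\mathcal I}\leq\overline{\mathcal I}$ is then closed with three ingredients you do not actually deploy: the Ben~Belgacem--Bennequin approximation of locally injective piecewise-affine maps by $C^1$-immersions with a uniform lower bound on $|\partial_1\psi_n\wedge\partial_2\psi_n|$ (Theorem~\ref{BBBapproxTheo}), the Gromov--{\`E}lia{\v s}berg density of $\Aff_{\rm li}^{\rm reg}(\Sigma;\RR^3)$ in $W^{1,p}$ (Theorem~\ref{TheoremGE}) combined with the $p$-growth and continuity of $\R W_0$ (Lemma~\ref{BBLemma2}), and the explicit 3d recovery from $C^1$-immersions via an interchange-of-infimum argument (Lemma~\ref{RD-Lemma3}). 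The locally-injective rank-one lamination is the decisive device; a periodic quasiconvex corrector is not a substitute for it.
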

{\bf Proof.} It is easy to see that if $W$ satisfies (\ref{D0}), (\ref{D1}) and (\ref{D2}) then:
\begin{eqnarray}
&& W_0\hbox{ is continuous;}\label{P0}\\
&& W_0(\xi)=W(\xi\mid\xi_2)=+\infty\iff|\xi_1\land\xi_2|=0;\label{P01}\\
&& \forall\alpha>0\ \exists \beta_\alpha>0\ \forall \xi\in\MM^{3\times 2}\big(|\xi_1\land\xi_2|\geq\alpha\then W_0(\xi)\leq \beta_\alpha(1+|\xi|^p)\big).\label{P1}
\end{eqnarray}
In particular, $W_0$ satisfies (\ref{P}), i.e.,
$$
\exists\alpha,\beta>0\ \forall \xi=(\xi_1\mid \xi_2)\in\MM^{3\times 2}\ \big(|\xi_1\land \xi_2|\geq\alpha\then W_0(\xi)\leq\beta(1+|\xi|^p)\big),
$$
 since clearly (\ref{P1}) implies (\ref{P}). Let $\mathcal{I}, \overline{\mathcal{I}}, \overline{\mathcal{I}}_{\rm diff_*}:W^{1,p}(\Sigma;\RR^3)\to[0,+\infty]$ be respectively defined by:
\begin{itemize}
\item[\SMALL$\blacklozenge$] $\displaystyle\mathcal{I}(\psi):=\int_\Sigma W_0(\nabla\psi(x))dx$;
\item[\SMALL$\blacklozenge$] $\overline{\mathcal{I}}(\psi):=\inf\left\{\liminf\limits_{n\to+\infty}\mathcal{I}(\psi_n):\psi_n\stackrel{L^p}{\to}\psi\right\}$;
\item[\SMALL$\blacklozenge$] $\overline{\mathcal{I}}_{\rm diff_*}(\psi):=\inf\left\{\liminf\limits_{n\to+\infty}\mathcal{I}(\psi_n):C^1_*(\overline{\Sigma};\RR^3)\ni\psi_n\stackrel{L^p}{\to}\psi\right\}$,
\end{itemize}
where $C^1_*(\overline{\Sigma};\RR^3)$ is the set of $C^1$-immersions from $\overline{\Sigma}$ to $\RR^3$, i.e., $$C^1_*(\overline{\Sigma};\RR^3):=\Big\{\psi\in C^1(\overline{\Sigma};\RR^3):\forall x\in\overline{\Sigma}\ \partial_1\psi(x)\land\partial_2\psi(x)\not= 0\Big\}.$$ As $W_0$ satisfies (\ref{P}), by Corollary \ref{corollary1} we have
$$
\forall\psi\in W^{1,p}(\Sigma;\RR^3)\quad\overline{\mathcal{I}}(\psi)=\int_\Sigma\Q W_0(\nabla\psi(x))dx.
$$
On the other hand, we can establish the following two lemmas (whose the proofs are given in \S 3.6.2 and \S 3.6.3). 
\begin{lemma}[\cite{oah-jpm08b}]\label{RD-Lemma2}
$\overline{\mathcal{I}}\leq\Gamma\hbox{-}\liminf\limits_{\eps\to 0}\mathcal{I}_\eps$.
\end{lemma}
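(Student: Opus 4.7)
The plan is to establish the defining inequality $\overline{\mathcal{I}}(\psi)\leq\liminf_{\eps\to 0}\mathcal{I}_\eps(\psi_\eps)$ for every sequence $\psi_\eps\stackrel{L^p}{\to}\psi$; taking the infimum over such sequences then yields the lemma. Assume $\liminf_\eps\mathcal{I}_\eps(\psi_\eps)<+\infty$, and for each $\eps$ choose $\phi_\eps\in W^{1,p}(\Sigma_\eps;\RR^3)$ with $\pi_\eps(\phi_\eps)=\psi_\eps$ and $I_\eps(\phi_\eps)\leq\mathcal{I}_\eps(\psi_\eps)+\eps$. Rescaling the transverse variable, set
$$
\hat\phi_\eps(x,x_3):=\phi_\eps(x,\eps x_3),\qquad (x,x_3)\in\Omega:=\Sigma\times(-\tfrac{1}{2},\tfrac{1}{2}),
$$
so that $I_\eps(\phi_\eps)=\int_\Omega W\bigl(\nabla_x\hat\phi_\eps\mid\tfrac{1}{\eps}\partial_3\hat\phi_\eps\bigr)dxdx_3$ and $\int_{-1/2}^{1/2}\hat\phi_\eps(\cdot,x_3)dx_3=\psi_\eps$ a.e. on $\Sigma$.

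By $p$-coercivity of $W$, $\|\nabla_x\hat\phi_\eps\|_{L^p(\Omega)}+\|\eps^{-1}\partial_3\hat\phi_\eps\|_{L^p(\Omega)}\leq C$ uniformly in $\eps$; in particular $\partial_3\hat\phi_\eps\to 0$ in $L^p(\Omega)$. Applying the one-dimensional Poincar\'e--Wirtinger inequality fiberwise in $x_3$, and using that the $x_3$-average of $\hat\phi_\eps$ equals $\psi_\eps$, we get
$$
\|\hat\phi_\eps-\psi_\eps\|_{L^p(\Omega)}\ \leq\ C\|\partial_3\hat\phi_\eps\|_{L^p(\Omega)}\ \longrightarrow\ 0,
$$
which combined with $\psi_\eps\to\psi$ in $L^p(\Sigma)$ gives $\hat\phi_\eps\to\psi$ in $L^p(\Omega;\RR^3)$ (with $\psi$ extended trivially in $x_3$).

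Since $W(\xi\mid\zeta)\geq W_0(\xi)$ pointwise, Fubini yields
$$
\mathcal{I}_\eps(\psi_\eps)+\eps\ \geq\ I_\eps(\phi_\eps)\ \geq\ \int_{-1/2}^{1/2}\mathcal{I}\bigl(\hat\phi_\eps(\cdot,x_3)\bigr)dx_3.
$$
Pass to a subsequence $\eps_k\to 0$ realizing $\liminf_\eps\mathcal{I}_\eps(\psi_\eps)=:\ell$, and then extract a further (diagonal) subsequence along which $\hat\phi_{\eps_k}(\cdot,x_3)\to\psi$ in $L^p(\Sigma;\RR^3)$ for a.e.\ $x_3$ (possible because $\hat\phi_{\eps_k}\to\psi$ in $L^p(\Omega)$). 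By definition of $\overline{\mathcal{I}}$ as the $L^p$-lower semicontinuous envelope of $\mathcal{I}$, $\overline{\mathcal{I}}(\psi)\leq\liminf_{k\to+\infty}\mathcal{I}(\hat\phi_{\eps_k}(\cdot,x_3))$ for a.e.\ $x_3$; integrating in $x_3$ and applying Fatou's lemma,
$$
\overline{\mathcal{I}}(\psi)\ \leq\ \int_{-1/2}^{1/2}\liminf_{k\to+\infty}\mathcal{I}(\hat\phi_{\eps_k}(\cdot,x_3))dx_3\ \leq\ \liminf_{k\to+\infty}\int_{-1/2}^{1/2}\mathcal{I}(\hat\phi_{\eps_k}(\cdot,x_3))dx_3\ \leq\ \ell.
$$

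The main subtlety is the compactness step: $p$-coercivity constrains only $\eps^{-1}\partial_3\hat\phi_\eps$, not $\hat\phi_\eps$ itself. It is the constraint $\pi_\eps(\phi_\eps)=\psi_\eps$ that fixes the transverse average and lets Poincar\'e--Wirtinger upgrade the $\eps$-smallness of $\partial_3\hat\phi_\eps$ into strong $L^p(\Omega)$-convergence of $\hat\phi_\eps$ to $\psi$. Once this is in hand, the remainder---Fubini via $W\geq W_0$, slicewise extraction, and the $L^p$-lower semicontinuity built into $\overline{\mathcal{I}}$---is essentially automatic. Notice that the proof does \emph{not} require the explicit representation $\overline{\mathcal{I}}(\psi)=\int_\Sigma\Q W_0(\nabla\psi)dx$ of Corollary~\ref{corollary1}; only $p$-coercivity and the pointwise bound $W\geq W_0$ enter the argument, and the assumptions (\ref{D1})--(\ref{D2}) of the theorem are not used here (they are required to obtain the matching $\limsup$ inequality).
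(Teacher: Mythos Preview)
Your proof is correct and follows essentially the same route as the paper's: rescale $\phi_\eps$ to the fixed domain $\Sigma\times(-\tfrac12,\tfrac12)$, use $p$-coercivity together with the Poincar\'e--Wirtinger inequality in the transverse variable to obtain $\hat\phi_\eps\to\psi$ in $L^p$, then apply the pointwise bound $W(\xi\mid\zeta)\geq W_0(\xi)$ slicewise and conclude with Fatou and the definition of $\overline{\mathcal{I}}$. The only cosmetic difference is that the paper first reduces (without loss of generality) to the case $\sup_\eps\mathcal{I}_\eps(\psi_\eps)<+\infty$ before invoking coercivity, whereas you state the uniform bound and then pass to a subsequence realizing the $\liminf$; strictly speaking the uniform bound is only available along such a subsequence, so it would be cleaner to extract it first, but this is a matter of presentation rather than substance.
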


\begin{lemma}[\cite{oah-jpm08b}]\label{RD-Lemma3}
If {\rm(\ref{D0})}, {\rm(\ref{D1})} and {\rm(\ref{D2})} hold then $\Gamma\hbox{-}\limsup\limits_{\eps\to 0}\mathcal{I}_\eps\leq \overline{\mathcal{I}}_{\rm diff_*}$.
\end{lemma}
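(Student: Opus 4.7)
By Lemma \ref{ABP} the identity $\Gamma(\pi)\hbox{-}\lim_{\eps\to 0}I_\eps(\psi)=\int_\Sigma\Q W_0(\nabla\psi(x))dx$ is equivalent to $\Gamma\hbox{-}\lim_{\eps\to 0}\mathcal{I}_\eps(\psi)=\int_\Sigma\Q W_0(\nabla\psi(x))dx$, so my strategy is to sandwich this $\Gamma$-limit between the envelopes $\overline{\mathcal{I}}$ and $\overline{\mathcal{I}}_{\rm diff_*}$ of the membrane functional $\mathcal{I}$ via Lemmas \ref{RD-Lemma2}--\ref{RD-Lemma3}, and then to identify both envelopes with $\int_\Sigma\Q W_0(\nabla\psi(x))dx$. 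The first step is to transfer {\rm(\ref{D0})--(\ref{D2})} to $W_0$. The identity $\det(\xi_1\mid\xi_2\mid\zeta)=\langle\zeta,\xi_1\land\xi_2\rangle$ combined with (\ref{D1}) yields (\ref{P01}). For the growth (\ref{P1}), when $|\xi_1\land\xi_2|\geq\alpha$ the choice $\zeta:=(\xi_1\land\xi_2)/|\xi_1\land\xi_2|$ gives $\det(\xi\mid\zeta)=|\xi_1\land\xi_2|\geq\alpha$ and $|\zeta|=1$, so (\ref{D2}) with $\delta:=\alpha$ produces $W_0(\xi)\leq W(\xi\mid\zeta)\leq\beta_\alpha(1+|\xi|^p)$. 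Continuity (\ref{P0}) then follows from (\ref{D0}), (\ref{P1}), and the blow-up of $W$ near $\{\det\to 0^+\}$ which compactifies the minimization over $\zeta$. In particular $W_0$ satisfies (\ref{P}), so Corollary \ref{corollary1} gives
$$
\overline{\mathcal{I}}(\psi)=\overline{\mathcal{I}}_{\rm aff}(\psi)=\int_\Sigma\Q W_0(\nabla\psi(x))dx.
$$

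\textbf{Sandwich.} The lower bound now follows directly from Lemma \ref{RD-Lemma2} combined with the identity above:
$$
\int_\Sigma\Q W_0(\nabla\psi(x))dx=\overline{\mathcal{I}}(\psi)\leq\Gamma\hbox{-}\liminf_{\eps\to 0}\mathcal{I}_\eps(\psi).
$$
For the upper bound, Lemma \ref{RD-Lemma3} gives $\Gamma\hbox{-}\limsup_{\eps\to 0}\mathcal{I}_\eps(\psi)\leq\overline{\mathcal{I}}_{\rm diff_*}(\psi)$, so it remains to show
$$
\overline{\mathcal{I}}_{\rm diff_*}(\psi)\leq\overline{\mathcal{I}}_{\rm aff}(\psi)=\int_\Sigma\Q W_0(\nabla\psi(x))dx.
$$
This reduces to an energy-controlled density step: for every $\varphi\in\Aff(\Sigma;\RR^3)$, produce $\varphi_k\in C^1_*(\overline{\Sigma};\RR^3)$ with $\varphi_k\to\varphi$ in $L^p$ and $\limsup_k\mathcal{I}(\varphi_k)\leq\mathcal{I}(\varphi)$, then diagonal-extract from any $\Aff$-recovery sequence provided by the identity $\overline{\mathcal{I}}_{\rm aff}(\psi)=\int_\Sigma\Q W_0(\nabla\psi(x))dx$.

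\textbf{Main obstacle.} The principal technical difficulty is this density of $C^1$-immersions in $\Aff(\Sigma;\RR^3)$ under the membrane energy. My candidate construction is $\varphi_k:=\rho_{1/k}*\tilde\varphi+\eta_k\Phi$, where $\tilde\varphi$ is a $W^{1,\infty}$-bounded extension of $\varphi$ across $\partial\Sigma$, $\rho_{1/k}$ is a standard mollifier, and $\Phi\in C^\infty(\overline{\Sigma};\RR^3)$ is a fixed smooth immersion with $\inf_{\overline{\Sigma}}|\partial_1\Phi\land\partial_2\Phi|\geq c_\Phi>0$ (e.g.\ a nondegenerate quadratic embedding). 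Two issues must be handled: first, pointwise nonvanishing of $\partial_1\varphi_k\land\partial_2\varphi_k$ on $\overline{\Sigma}$ is delicate because the $O(\eta_k)$ cross terms can cancel against the dominant $\eta_k^2(\partial_1\Phi\land\partial_2\Phi)$ term, and is resolved by tuning $\eta_k\downarrow 0$ fast in terms of $\|\nabla(\rho_{1/k}*\tilde\varphi)\|_\infty$ (finite by the piecewise affine character of $\varphi$); second, uniform energy control is obtained via (\ref{P1}) once the resulting lower bound $|\partial_1\varphi_k\land\partial_2\varphi_k|\geq\alpha_k>0$ on $\overline{\Sigma}$ is secured, combined with continuity (\ref{P0}) of $W_0$ on $\{\xi:|\xi_1\land\xi_2|>0\}$ and dominated convergence to pass $\mathcal{I}(\varphi_k)\to\mathcal{I}(\varphi)$. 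All remaining pieces are direct applications of Corollary \ref{corollary1} and Lemmas \ref{RD-Lemma2}, \ref{RD-Lemma3}, \ref{ABP}, so I expect this density argument to be the heart of the proof.
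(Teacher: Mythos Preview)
Your proposal does not prove Lemma~\ref{RD-Lemma3}; you invoke it as a black box in the sentence ``Lemma~\ref{RD-Lemma3} gives $\Gamma\hbox{-}\limsup_{\eps\to 0}\mathcal{I}_\eps(\psi)\leq\overline{\mathcal{I}}_{\rm diff_*}(\psi)$'' and then spend the rest of the write-up on the inequality $\overline{\mathcal{I}}_{\rm diff_*}\leq\overline{\mathcal{I}}$, i.e.\ on (\ref{I-0}), which is a different step in the proof of Theorem~\ref{RD-s}. Lemma~\ref{RD-Lemma3} is a statement about \emph{three-dimensional recovery sequences}: for each $\psi\in C^1_*(\overline{\Sigma};\RR^3)$ one must construct $\phi_\eps\in W^{1,p}(\Sigma_\eps;\RR^3)$ with $\pi_\eps(\phi_\eps)=\psi$ and $\limsup_{\eps\to 0}I_\eps(\phi_\eps)\leq\mathcal{I}(\psi)$. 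The paper does this via the Kirchhoff--Love ansatz $\phi(x,x_3)=\psi(x)+x_3\varphi_k(x)$, where $\varphi_k$ is a smooth approximation of a near-optimal continuous selection for the thickness minimization $\inf_\zeta W(\nabla\psi(x)\mid\zeta)$; the selection is produced by an interchange-of-infimum-and-integral theorem (Theorem~\ref{AHM}, Lemma~\ref{IntermediateTheorDetNot}) applied to the multifunctions $\Lambda^j_\psi(x)=\{\zeta:\det(\nabla\psi(x)\mid\zeta)\geq 1/j\}$, and the uniform determinant lower bound persists for small $\eps$, so (\ref{D2}) and (\ref{D0}) allow dominated convergence. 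None of this appears in your proposal.

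Even for the step you did attempt, $\overline{\mathcal{I}}_{\rm diff_*}\leq\overline{\mathcal{I}}_{\rm aff}$, your mollification-plus-perturbation scheme has a gap. If $\varphi\in\Aff(\Sigma;\RR^3)$ has rank-one gradient on a set of positive measure then $\mathcal{I}(\varphi)=+\infty$ by (\ref{P01}) and there is nothing to prove; but when all pieces of $\varphi$ are rank two, $\nabla(\rho_{1/k}*\tilde\varphi)$ can still drop rank along the $1$-skeleton of the triangulation, and your heuristic ``tune $\eta_k\downarrow 0$ fast'' goes the wrong way: shrinking $\eta_k$ makes the $\eta_k^2(\partial_1\Phi\land\partial_2\Phi)$ term \emph{smaller}, not dominant over the $O(\eta_k)$ cross terms, so nonvanishing of $\partial_1\varphi_k\land\partial_2\varphi_k$ is not secured. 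The paper circumvents this entirely by passing through the intermediate space $\Aff_{\rm li}^{\rm reg}(\Sigma;\RR^3)$ of locally injective regular piecewise affine maps and using three nontrivial external results: the Ben Belgacem--Bennequin approximation (Theorem~\ref{BBBapproxTheo}) for (\ref{I-1}), the Kohn--Strang iterated rank-one decomposition for (\ref{I-2}), and the Gromov--{\`E}lia{\v{s}}berg density theorem (Theorem~\ref{TheoremGE}) for (\ref{I-3}).
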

Hence, taking Lemma \ref{ABP} into account, it suffices to prove that 
\begin{equation}\label{I-0}
\overline{\mathcal{I}}_{\rm diff_*}\leq \overline{\mathcal{I}}.
\end{equation}
 Let us outline the proof of (\ref{I-0}) (a  more detailled proof is given in \S 3.6.5). Consider $\overline{\mathcal{I}}_{\rm aff_{\rm li}^{\rm reg}},\R\mathcal{I},\overline{\mathcal{R I}},\overline{\mathcal{R I}}_{\rm aff_{\rm li}^{\rm reg}}:W^{1,p}(\Sigma;\RR^3)\to[0,+\infty]$ respectively defined by:
\begin{itemize}
\item[\SMALL$\blacklozenge$] $\displaystyle \overline{\mathcal{I}}_{\rm aff_{\rm li}^{\rm reg}}(\psi):=\inf\left\{\liminf_{n\to+\infty}\mathcal{I}(\psi_n):\Aff_{\rm li}^{\rm reg}(\Sigma;\RR^3)\ni\psi_n\stackrel{L^p}{\to}\psi\right\}$;
\item[\SMALL$\blacklozenge$] $\displaystyle \R\mathcal{I}(\psi):=\int_\Sigma\R W_0(\nabla\psi(x))dx$;
\item[\SMALL$\blacklozenge$] $\displaystyle \overline{\mathcal{R I}}(\psi):=\inf\left\{\liminf_{n\to+\infty}\mathcal{R}\mathcal{I}(\psi_n):\psi_n\stackrel{L^p}{\to}\psi\right\}$;
\item[\SMALL$\blacklozenge$] $\displaystyle \overline{\mathcal{R I}}_{\rm aff_{\rm li}^{\rm reg}}(\psi):=\inf\left\{\liminf_{n\to+\infty}\mathcal{R}\mathcal{I}(\psi_n):\Aff_{\rm li}^{\rm reg}(\Sigma;\RR^3)\ni\psi_n\stackrel{L^p}{\to}\psi\right\},$
\end{itemize}
where $\Aff_{\rm li}^{\rm reg}(\Sigma;\RR^3)$ is a space of ``nice" locally injective continuous piecewise affine functions from $\Sigma$ to $\RR^3$ defined as follows.
\begin{definition}
 By a regular mesh in $\RR^2$ we mean a finite family  $\{V_i\}_{i\in I}$ of open disjoint triangles of $\RR^2$ such that for every  $i,j\in I$ with $i\not= j$, the intersection of $\overline{V_i}$ and $\overline{V_j}$ is either empty,  an edge of each or a vertices of each.  Given an open set $V\subset\RR^2$, we say that $\psi:V\to\RR^3$ is affine if it is the restriction to $V$ of an affine function from $\RR^2$ to $\RR^3$. The space of all continuous functions $\psi:\RR^2\to\RR^3$ for which there exists a regular mesh $\{V_i\}_{i\in I}$ in $\RR^2$ such that for every $i\in I$, $\psi\lfloor_{{V_i}}$ is affine and $\psi=0$ in $\RR^2\setminus\cup_{i\in I}\overline{V}_i$ is denoted by $\Aff^{\rm reg}_{\rm c}(\RR^2;\RR^3)$. We set:
\begin{itemize}
\item[] $\Aff^{\rm reg}(\Sigma;\RR^3):=\Big\{\psi\lfloor_{{\Sigma}}:\psi\in\Aff^{\rm reg}_{\rm c}(\RR^2;\RR^3)\Big\}$;
\item[] $\Aff^{\rm reg}_0(\Sigma;\RR^3):=\Big\{\psi\in\Aff^{\rm reg}(\Sigma;\RR^3):\psi=0\hbox{ on }\partial\Sigma\Big\}$.
\end{itemize}
We say that $\psi:\RR^2\to\RR^3$ is locally injective in $x\in\RR^2$ if there exists $\rho>0$ such that $\psi\lfloor_{{B_\rho(x)}}$ is injective, where $B_\rho(x)$ denotes the ball centered at  $x$ with radius $\rho$. Given $E\subset\RR^2$, when $\psi$ is locally injective in $x$ for all $x\in E$,  we say that $\psi$ is locally injective on $E$.  We set 
$$
\Aff_{\rm li}^{\rm reg}(\Sigma;\RR^3):=\Big\{\psi\lfloor_{{\Sigma}}:\Aff^{\rm reg}_{\rm c}(\RR^2;\RR^3)\ni\psi\hbox{ is locally injective on }\overline{\Sigma}\Big\}.
$$

\end{definition}

As $\overline{\mathcal{R I}}\leq\overline{\mathcal{I}}$, a way for proving (\ref{I-0}) is to establish the following  three inequalities:
\begin{eqnarray}
&&\overline{\mathcal{I}}_{\rm diff_*}\leq \overline{\mathcal{I}}_{\rm aff_{\rm li}^{\rm reg}};\label{I-1}\\
&&\overline{\mathcal{I}}_{\rm aff_{\rm li}^{\rm reg}}\leq \overline{\mathcal{R I}}_{\rm aff_{\rm li}^{\rm reg}};\label{I-2}\\
&&\overline{\mathcal{R I}}_{\rm aff_{\rm li}^{\rm reg}}\leq \overline{\mathcal{R I}}\label{I-3}.
\end{eqnarray}
The inequality (\ref{I-1}) follows by using the fact that $W_0$ satisfies (\ref{P0}) and (\ref{P1}) together with the following theorem due to Ben Belgacem and Bennequin (for a proof, see \cite[Lemme 8 p. 114]{benbelgacem96}; see also \cite[\S 4.2 p. 52]{oah-jpm09a} for a ``more analytic" proof).

\begin{theorem}[\cite{benbelgacem96}]\label{BBBapproxTheo}
For all $\psi\in\Aff_{\rm li}^{\rm reg}(\Sigma;\RR^3)$ there exists $\{\psi_n\}_{n\geq 1}\subset C^1_*(\overline{\Sigma};\RR^3)$ such that{\rm:}
\begin{eqnarray}
&&\psi_n\stackrel{W^{1,p}}{\to}\psi;\label{BBB1}\\
&&\exists\delta>0\ \forall x\in\overline{\Sigma}\ \forall n\geq 1\ |\partial_1\psi_n(x)\land\partial_2\psi_n(x)|\geq\delta.\label{BBB2}
\end{eqnarray}
\end{theorem}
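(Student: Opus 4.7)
My plan is to construct $\psi_n$ by modifying $\psi$ only on a shrinking neighborhood of the 1-skeleton of the underlying mesh, where $\psi$ fails to be $C^1$. On the interior of each triangle $V_i$, $\psi$ is affine and, by the definition of $\Aff^{\rm reg}_{\rm li}(\Sigma;\RR^3)$ together with local injectivity, the constant matrix $\nabla\psi\lfloor_{V_i}$ has nonzero $2\times 2$ cross product; since the mesh is finite, $|\partial_1\psi\land\partial_2\psi|\geq\delta_0>0$ uniformly on triangle interiors. Hence outside a $1/n$-tubular neighborhood $T_n$ of the 1-skeleton I simply set $\psi_n=\psi$, and (\ref{BBB2}) is automatic there.

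The heart of the construction is a local model near an interior point of an edge $e$ shared by two triangles $V_i,V_j$. In adapted coordinates in which $e$ lies on the $x_1$-axis with $V_i\subset\{x_2<0\}$ and $V_j\subset\{x_2>0\}$, the map reads $\psi(x_1,x_2)=v_1x_1+u_ix_2$ for $x_2<0$ and $\psi(x_1,x_2)=v_1x_1+u_jx_2$ for $x_2>0$ (up to translation), the common vector $v_1$ being the image of the edge direction. Local injectivity on both sides forces $v_1\land u_i\neq 0$ and $v_1\land u_j\neq 0$, and forbids the two image half-planes from overlapping; geometrically the two image faces form a proper wedge in $\RR^3$. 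This wedge condition is preserved under convex combinations: there exists $\delta'>0$ such that $|v_1\land((1-t)u_i+tu_j)|\geq\delta'$ for every $t\in[0,1]$. I then define $\psi_n(x_1,x_2):=v_1x_1+g_n(x_2)$, where $g_n$ is a standard mollification at scale $1/n$ of the piecewise affine function $g(x_2):=u_ix_2$ for $x_2<0$ and $g(x_2):=u_jx_2$ for $x_2>0$. By construction $\psi_n$ agrees with $\psi$ outside the strip $\{|x_2|<1/n\}$, is $C^\infty$ inside, and $\partial_2\psi_n(x_1,x_2)$ is a convex combination of $u_i$ and $u_j$, giving (\ref{BBB2}) locally.

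I then globalize by patching. Near a vertex $v$ of the mesh several triangles meet and the above edge constructions must be made compatible; I use a smooth partition of unity subordinate to a small ball around $v$ and to the per-edge strips, and in each such ball I perform a direct smooth interpolation between the incident affine pieces. Local injectivity at $v$ (the essential property distinguishing $\Aff^{\rm reg}_{\rm li}$ from $\Aff^{\rm reg}$) again provides a uniform cross-product lower bound that survives the gluing. Once the edge and vertex neighborhoods are treated, the resulting $\psi_n$ is $C^1$ on $\overline{\Sigma}$, agrees with $\psi$ off the $1/n$-neighborhood $T_n$ of the 1-skeleton, and has uniformly bounded gradient. Since $|T_n|=O(1/n)$ and $\|\psi_n-\psi\|_{L^\infty}=O(1/n)$ with bounded gradient difference on $T_n$, the strong $W^{1,p}$ convergence (\ref{BBB1}) follows at once, while (\ref{BBB2}) holds with $\delta$ the minimum of the finitely many local wedge constants.

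The main obstacle is the vertex compatibility: several edges, each with its own wedge direction, meet at a common vertex, and the partition-of-unity gluing must preserve the cross-product bound throughout. This is precisely where the locally injective hypothesis (rather than mere non-degeneracy of each affine piece) is indispensable, since it is the geometric condition guaranteeing that the tensor $\nabla\psi_n$ can be smoothly deformed inside $\{|\partial_1\land\partial_2|\neq 0\}$ all the way across the vertex. The remainder of the argument is soft: a convexity lemma for the wedge, plus standard mollification estimates on the narrow strips $T_n$.
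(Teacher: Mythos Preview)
The paper does not prove this theorem; it only cites \cite{benbelgacem96} (Lemme 8, p.~114) and the companion notes \cite{oah-jpm09a} for the argument. So there is no ``paper's own proof'' to compare against, and your proposal has to be judged on its merits.

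Your edge construction is essentially correct. The key claim---that local injectivity across an edge forces $|v_1\land((1-t)u_i+tu_j)|\geq\delta'>0$ for all $t\in[0,1]$---is valid: if the convex combination vanished for some $t$, then $v_1\land u_i$ and $v_1\land u_j$ would be negative multiples of each other, which in turn forces $u_i=-cu_j+\mu v_1$ with $c>0$; a direct check shows that both half-planes then map into the same half of the plane spanned by $v_1$ and $u_j$, contradicting local injectivity. So the mollification-in-the-normal-direction model does the job along edge interiors.

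The genuine gap is the vertex step, and you have essentially admitted as much. A partition of unity does \emph{not} preserve the rank condition: if $\psi_n=\sum_k\chi_k\psi^{(k)}$ then $\nabla\psi_n=\sum_k\chi_k\nabla\psi^{(k)}+\sum_k\psi^{(k)}\otimes\nabla\chi_k$, and the second sum is uncontrolled; there is no reason for $\partial_1\psi_n\land\partial_2\psi_n$ to stay away from zero. What you actually need is an extension statement: the edge constructions give, on a small annulus around each vertex, a smooth map into the space $\MM^{3\times 2}_*$ of full-rank $3\times 2$ matrices, and you must extend it over the disk while remaining in $\MM^{3\times 2}_*$. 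Since $\MM^{3\times 2}_*$ retracts onto the Stiefel manifold $V_2(\RR^3)\cong SO(3)$ with $\pi_1=\ZZ/2$, there is a genuine topological obstruction, and the whole content of the Ben Belgacem--Bennequin result is that \emph{local injectivity at the vertex} kills this obstruction. You invoke local injectivity at exactly the right moment, but you do not explain how it is used; ``direct smooth interpolation'' and ``survives the gluing'' are placeholders for the actual argument. Until you produce either a direct construction of the vertex extension or a proof that the relevant loop in $\MM^{3\times 2}_*$ is null-homotopic, the proof is incomplete at its hardest point.
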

The inequality (\ref{I-2}) is obtained by exploiting the Kohn-Strang representation of $\R W_0$. (Note that for establishing this inequality we need the assertion (\ref{P01}).) Finally, we establish  the inequality (\ref{I-3}) by combining the following two results: the first one is essentially due to Ben Belgacem (a proof is given in \S 3.6.4) and the second one to Gromov and {\`E}lia{\v{s}}berg (for a proof, see \cite[Theorem 1.3.4B]{gromov-eliashberg71}; see also \cite[\S 4.1 p. 44]{oah-jpm09a}). 

\begin{lemma}\label{BBLemma2}
If $W_0$ satisfies {\rm(\ref{P1})} then{\rm:}
\begin{itemize}
\item[\SMALL$\blacklozenge$] $\exists c>0\ \forall \xi\in\MM^{3\times 2}\ \R W_0(\xi)\leq c(1+|\xi|^p);$
\item[\SMALL$\blacklozenge$] $\mathcal{R} W_0$ is continuous.
\end{itemize}
\end{lemma}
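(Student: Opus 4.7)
The plan is to derive both assertions from (\ref{P1}) together with the rank-one convexity of $\R W_0$; the growth bound is the substantive step, and continuity will be a corollary once $\R W_0$ is known to be finite on $\MM^{3\times 2}$.

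For polynomial growth, since $\R W_0\leq W_0$, condition (\ref{P1}) already gives $\R W_0(\xi)\leq \beta_\alpha(1+|\xi|^p)$ whenever $|\xi_1\land\xi_2|\geq\alpha$, for any chosen $\alpha>0$. The task is therefore to bound $\R W_0$ on the degeneracy set $\{|\xi_1\land\xi_2|<\alpha\}$. For this I would exploit that $\R W_0$, being a rank-one convex envelope, satisfies
$$
\R W_0(\xi)\leq \tfrac{1}{2}\R W_0(\xi+\eta)+\tfrac{1}{2}\R W_0(\xi-\eta)
$$
for every rank-one $\eta\in\MM^{3\times 2}$. Iterating with the two column splits $\eta_1=(\nu_1\mid 0)$ and $\eta_2=(0\mid\nu_2)$ yields the four-term majorant
$$
\R W_0(\xi)\leq \tfrac{1}{4}\sum_{s,t\in\{\pm 1\}}\R W_0(\xi_1+s\nu_1\mid\xi_2+t\nu_2),
$$
analogous to (\ref{ZzZ}) in the proof of Lemma \ref{R-Lemma3}. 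I would then choose $\nu_1,\nu_2\in\RR^3$ (depending on $\xi$, but with $|\nu_1|,|\nu_2|$ controlled by a constant multiple of $1+|\xi|$) so that each of the four perturbed matrices $(\xi_1+s\nu_1\mid\xi_2+t\nu_2)$ has cross product of norm at least $\alpha$; bounding the four terms via (\ref{P1}) then delivers $\R W_0(\xi)\leq c(1+|\xi|^p)$ for some constant $c>0$.

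For continuity, once the growth bound has been established, $\R W_0$ is finite on all of $\MM^{3\times 2}$. Rank-one convexity forces in particular separate convexity in each matrix entry, which on a finite-dimensional space is enough for local Lipschitz continuity; this is a classical fact, the very one underlying the proof of Lemma \ref{FonsecaTheorem}(b). Hence $\R W_0$ is continuous.

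The main obstacle lies in the geometric construction of $(\nu_1,\nu_2)$ on the degeneracy set, where $\xi_1,\xi_2$ are nearly parallel or one of them is (nearly) zero. One must simultaneously arrange that \emph{all four} cross products $(\xi_1+s\nu_1)\land(\xi_2+t\nu_2)$ remain bounded below by the fixed $\alpha$, while $|\nu_1|,|\nu_2|$ grow only linearly in $|\xi|$ so that the polynomial growth of (\ref{P1}) transfers. This requires a case analysis of the same kind as (\ref{Possibility(i)Vect})--(\ref{Possibility(iv)Vect}) in Lemma \ref{R-Lemma3}, handling separately the rank-two, rank-one and rank-zero regimes of $\xi$; in singular-value language, the construction amounts to ``lifting'' the smaller singular value of $\xi$ by a rank-one perturbation of controlled size, which is the geometric content isolated in Theorem \ref{SingularValues} and traceable to Ben Belgacem.
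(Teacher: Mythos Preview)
Your proposal is correct and matches the paper's approach: the paper derives the growth bound as a direct consequence of Theorem \ref{SingularValues} (the SVD-based lifting of small singular values that you identify at the end, noting $v(\xi)=|\xi_1\land\xi_2|$ in the $3\times 2$ case), and obtains continuity exactly as you do from finiteness plus rank-one convexity. The only difference is presentational: the paper works in the singular-value frame from the outset, which makes the rank-one directions canonical and bypasses the column-perturbation case analysis you sketch.
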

\begin{theorem}[\cite{gromov-eliashberg71}]\label{TheoremGE}
$\Aff_{\rm li}^{\rm reg}(\Sigma;\RR^3)$ is strongly dense in $W^{1,p}(\Sigma;\RR^3)$. $\blacksquare$
\end{theorem}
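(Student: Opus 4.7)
The plan is to combine standard density of piecewise affine functions in $W^{1,p}$ with a generic perturbation argument in codimension one. By classical finite-element theory (smoothing followed by nodal interpolation on a fine regular mesh), $\Aff^{\rm reg}(\Sigma;\RR^3)$ is already strongly dense in $W^{1,p}(\Sigma;\RR^3)$; hence, given $\psi\in W^{1,p}(\Sigma;\RR^3)$ and $\eta>0$, one first finds $\phi\in\Aff^{\rm reg}(\Sigma;\RR^3)$ with $\|\psi-\phi\|_{W^{1,p}}<\eta/2$, and it remains to approximate any such $\phi$ within $\eta/2$ in $W^{1,p}$-norm by an element of $\Aff_{\rm li}^{\rm reg}(\Sigma;\RR^3)$.

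Let $\{V_i\}_{i\in I}$ be the regular mesh of $\phi$, with vertices $v_1,\dots,v_M$. After a preliminary refinement, which does not alter $\phi$, one may assume that each triangle has diameter less than some $\rho>0$ small compared to the mesh geometry, so that local injectivity on $\overline\Sigma$ of a perturbed PL map reduces to injectivity on the closed star $\mathrm{St}(v_j)$ of every vertex $v_j$. Define $\tilde\phi$ by $\tilde\phi(v_j):=\phi(v_j)+\delta_j$ with $\delta_j\in\RR^3$ and extend affinely on every triangle; then $\|\phi-\tilde\phi\|_{W^{1,p}}\leq C\max_j|\delta_j|$ with $C$ depending only on the mesh, so $W^{1,p}$-smallness is automatic for small displacements. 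The heart of the argument is the observation that the set of $(\delta_1,\dots,\delta_M)\in(\RR^3)^M$ for which $\tilde\phi$ fails to embed some $\mathrm{St}(v_j)$ is a finite union of proper real algebraic subsets, each defined by collinearity or coplanarity equations among image vertices, with positive codimension precisely because $\dim\Sigma=2<3=\dim\RR^3$. Hence a generic arbitrarily small perturbation yields $\tilde\phi\in\Aff_{\rm li}^{\rm reg}(\Sigma;\RR^3)$.

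The main obstacle is the combinatorial verification that star-embedding at every $v_j$ is simultaneously achievable by a single small perturbation, including at boundary vertices where the star is only a topological half-disk and the link is an arc. This is the combinatorial content of the Gromov--\`Elia\v{s}berg $h$-principle for topological immersions (see \cite[Theorem 1.3.4B]{gromov-eliashberg71}), whose proof proceeds by micro-extension along the skeleton of the mesh and reduces each star-embedding problem to a lower-dimensional embedding problem for the link. The codimension assumption is essential here: in codimension zero (for instance, maps into $\RR^2$), local injectivity is no longer a generic condition and the analogous density conclusion fails.
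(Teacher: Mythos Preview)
The paper does not give its own proof of this statement: Theorem~\ref{TheoremGE} is quoted as a result of Gromov and {\`E}lia{\v{s}}berg, with the explicit pointer ``for a proof, see \cite[Theorem~1.3.4B]{gromov-eliashberg71}; see also \cite[\S4.1]{oah-jpm09a}''. The $\blacksquare$ appearing after the theorem closes the outline of the proof of Theorem~\ref{RD-s}, not a proof of Theorem~\ref{TheoremGE}. So there is no argument in the paper to compare your attempt against.

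As for your sketch itself, the overall strategy---reduce to $\Aff^{\rm reg}(\Sigma;\RR^3)$ by smoothing plus nodal interpolation, then perturb vertex positions generically to achieve local injectivity using the codimension hypothesis $2<3$---is indeed the standard route, and is essentially how the result is proved in \cite{gromov-eliashberg71}. Two points deserve care. First, the set of bad vertex displacements is \emph{semi}-algebraic rather than algebraic: the condition that two closed triangles in $\RR^3$ intersect involves inequalities, not only equalities, so one must argue positive codimension for a semi-algebraic set (still true, but not literally what you wrote). Second, your final paragraph effectively concedes the hard step---simultaneous star-embedding at all vertices, including boundary vertices---by invoking \cite[Theorem~1.3.4B]{gromov-eliashberg71}, which is precisely the theorem you are asked to prove. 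That makes the argument circular as written; a self-contained proof must actually carry out the inductive perturbation over the skeleton rather than cite the conclusion.
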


\begin{question}
Try to simplify the proof of Theorem {\rm\ref{RD-s}} as follows{\rm:} first, approximate $W$ satisfying {\rm(\ref{D0})}, {\rm(\ref{D1})} and {\rm(\ref{D2})} or maybe weaker conditions compatible with the ``strong-Determinant Constraint", i.e., {\rm(\ref{s-DC})}, by a supremum of $p$-ample integrands $W_\delta$ satisfying {\rm(\ref{D})} with $\alpha,\beta>0$ which can depend on $\delta$, then, apply Theorem {\rm\ref{RD-w}} to each $W_\delta$, and finally, pass to the limit as $\delta$ goes to zero.
\end{question}

\subsection{Complementary proofs}

\subsubsection{Proof of Lemma \ref{RD-Lemma1}} It suffices to prove that \begin{equation}\label{EqRD-Lemma1}
\Z_\infty [\Z_\infty W]_0=\Z_\infty W_0.
\end{equation}
 Indeed, from Lemma \ref{R-Lemma3} we deduce that $\Z W_0$ is of $p$-polynomial growth, i.e., $\exists c>0\ \forall \xi\in\MM^{3\times 2}\ \Z W_0(\xi)\leq c(1+|\xi|^p)$, and so $\Z_\infty W_0$ is finite since $\Z_\infty W_0\leq \Z W_0$. Hence, $\Q W_0=\Z_\infty W_0$ by Theorem \ref{RTnon-finite-case}(i). On the other hand, $\Z_\infty W$ is of $p$-polynomial growth (see Lemma \ref{LeMMa-wDC}-bis), and so $\Q W=\Z_\infty W$ by Theorem \ref{RTnon-finite-case}(i). It follows that $[\Q W]_0=[\Z_\infty W]_0$ is finite and continuous, and so $\Q[\Q W]_0=\Q[\Z_\infty W]_0=\Z_\infty[\Z_\infty W]_0$ by Theorem \ref{DacorognaTheorem1}.
 
 Let us now prove (\ref{EqRD-Lemma1}). For any $\xi\in\MM^{3\times 2}$, $\Z_\infty[\Z_\infty W]_0(\xi)\leq[\Z_\infty W]_0(\xi)\leq\Z_\infty W(\xi\mid\zeta)\leq W(\xi\mid\zeta)$ for all $\zeta\in\RR^3$, and so $\Z_\infty[\Z_\infty W]_0(\xi)\leq W_0(\xi)$ for all $\xi\in\MM^{3\times 2}$, i.e., $\Z_\infty[\Z_\infty W]_0\leq\Z_\infty W_0$. It follows that $\Z_\infty[\Z_\infty W]_0\leq\Z_\infty W_0$. It then remains to prove that 
 \begin{equation}\label{RevERseInequALITY}
 \Z_\infty[\Z_\infty W]_0\ge \Z_\infty W_0.
 \end{equation}
Given $\delta>0$ et $\xi\in\MM^{3\times 2}$, there exist $\zeta\in\RR^3$ and $\varphi\in W^{1,\infty}_0(Y;\RR^3)$ (with $Y:=]0,1[^3$) such that 
\[
\left[\Z_\infty W\right]_0(\xi)+\delta\ge \int_YW\big(\xi+\nabla\varphi_{x_3}(x)\mid\zeta+\partial_3\varphi(x,x_3)\big)dxdx_3
\]
with $\varphi_{x_3}\in \Aff_0(]0,1[^2;\RR^3)$ defined by $\varphi_{x_3}(x):=\varphi(x,x_3)$. But
\begin{eqnarray*}
\int_YW\big(\xi+\nabla\varphi_{x_3}(x)\mid\zeta+\partial_3\varphi(x,x_3)\big)dxdx_3&\ge& \int_{0}^{1}\int_{]0,1[^2} W_0(\xi+\nabla\varphi_{x_3}(x))dxdx_3\\
&\ge&\int_{0}^{1}\Z_\infty W_0(\xi)dx_3=\Z_\infty  W_0(\xi),
\end{eqnarray*}
hence $[\Z_\infty W]_0(\xi)+\delta\ge\Z_\infty W_0(\xi)$, and consequently  $[\Z_\infty W]_0(\xi)\ge \Z_\infty W_0(\xi)$ by letting $\delta\to 0$. Thus $[\Z_\infty W]_0\ge \Z_\infty W_0$, and (\ref{RevERseInequALITY}) follows. $\blacksquare$


\subsubsection{Proof of Lemma \ref{RD-Lemma2}} Let $\psi\in W^{1,p}(\Sigma;\RR^3)$ and let $\{\psi_\eps\}_\eps\subset W^{1,p}(\Sigma;\RR^3)$ be such that $\psi_\eps\to \psi$ in $L^{p}(\Sigma;\RR^3)$. We have to prove that
\begin{equation}\label{main_ineq1}
\liminf_{\eps\to 0}\mathcal{I}_\eps(\psi_\eps)\geq \overline{\mathcal{I}}(v).
\end{equation}
Without loss of generality we can assume that $\sup_{\eps>0}\mathcal{I}_\eps(\psi_\eps)<+\infty$. To every  $\eps>0$ there corresponds $\phi_\eps\in\pi_{\eps}^{-1}(\psi_\eps)$ such that
\begin{equation}\label{main_ineq2}
\mathcal{I}_\eps(\psi_\eps)\geq I_\eps(\phi_\eps)-\eps.
\end{equation}
Defining $\hat \phi_\eps:\Sigma_1\to\RR^3$ by $\hat \phi_\eps(x,x_3):=\phi_\eps(x,\eps x_3)$ (with $\Sigma_1=\Sigma\times]-{1\over 2},{1\over 2}[$) we have 
\begin{equation}\label{main_ineq3}
I_\eps(\phi_\eps)=\int_{\Sigma_1}W\left(\partial_{1} \hat \phi_\eps(x,x_3)\mid\partial_{2} \hat \phi_\eps(x,x_3)\mid{1\over\eps}\partial_3 \hat \phi_\eps(x,x_3)\right)dxdx_3.
\end{equation}
Using the coercivity of  $W$, we deduce that $\|{\partial_3 \hat \phi_\eps}\|_{L^p(\Sigma_1;\RR^3)}\leq c\eps^{p}$ for all $\eps>0$ and some $c>0$, and so
$
\|\hat \phi_\eps-\psi_\eps\|_{L^p(\Sigma_1;\RR^3)}\leq c^\prime\eps^p
$
by the  Poincar\'e-Wirtinger inequality, where $c^\prime>0$ is a constant which does not depend on $\eps$. It follows that $\hat \phi_\eps\to \psi$ in $L^p(\Sigma_1;\RR^3)$. For $x_3\in]-{1\over 2},{1\over 2}[$, let $\varphi_\eps^{x_3}\in W^{1,p}(\Sigma;\RR^3)$ be defined by $\varphi_\eps^{x_3}(x):=\hat \phi_\eps(x,x_3)$. Then (up to a subsequence) $\varphi_\eps^{x_3}\to \psi$ in $L^p(\Sigma;\RR^3)$ for a.e. $x_3\in ]-{1\over 2},{1\over 2}[$. Taking (\ref{main_ineq2}) and (\ref{main_ineq3}) into account and using the Fatou lemma, we obtain 
$$
\liminf_{\eps\to 0}{\mathcal{I}}_\eps(\psi_\eps)\geq\int_{-{1\over 2}}^{1\over 2}\left(\liminf_{\eps\to 0}\int_\Sigma W_0\big(\nabla \varphi_\eps^{x_3}(x)\big)dx\right)dx_3,
$$
and (\ref{main_ineq1}) follows.  $\blacksquare$


\subsubsection{Proof of Lemma \ref{RD-Lemma3}} Given $\psi\in C^1_*(\overline{\Sigma};\RR^3)$ and $j\geq 1$, define $\Lambda_\psi^j:\overline{\Sigma}\dto\RR^3$ by :
$$
\Lambda^j_\psi(x):=\left\{\zeta\in\RR^3:\det(\nabla\psi(x)\mid\zeta)\geq{1\over j}\right\}.
$$
It is easy to see that:
 \begin{eqnarray}
 &&\Lambda^j_\psi\hbox{ is a nonempty convex closed-valued semicontinuous\footnote{} multifunction;}\label{Det>0Prop0}\\
&&\Lambda^1_\psi(x)\subset\Lambda^2_\psi(x)\subset\Lambda^3_\psi(x)\subset\cdots\subset\cupp\limits_{j\geq 1}\Lambda^j_\psi(x)=\big\{\zeta\in\RR^3:\det(\nabla\psi(x)\mid\zeta)>0\big\}.\label{Det>0Prop1}
\end{eqnarray}
\footnotetext[2]{A multifunction $\Lambda:\overline{\Sigma}\to\RR^3$ is said to be lower semicontinuous if for every closed subset $X$ of $\RR^3$, every $x\in\overline{\Sigma}$ and every $\{x_n\}_{n\geq 1}\subset\overline{\Sigma}$ such that $|x_n-x|\to 0$ as $n\to+\infty$ and $\Lambda(x_n)\subset X$ for all $n\geq 1$, we have $\Lambda(x)\subset X$ (see \cite{aubin-frankowska90} for more details).}
In the sequel, given $\Lambda:\overline{\Sigma}\dto\RR^3$ we set
$$
C(\overline{\Sigma};\Lambda):=\Big\{\phi\in C\big(\overline{\Sigma};\RR^3\big):\phi(x)\in\Lambda(x)\hbox{ for all }x\in\overline{\Sigma}\Big\},
$$ 
where $C(\overline{\Sigma};\RR^3)$ denotes the space of all continuous functions from $\overline{\Sigma}$ to $\RR^3$. 
\begin{lemma}\label{LemmeDeT>0}
Let $\psi\in C^1_*(\overline{\Sigma};\RR^3)$ and $j\geq 1$. If $W$ is continuous and satisfies {\rm(\ref{D2})} then
$$
\inf_{\varphi\in C(\overline{\Sigma};\Lambda_\psi^j)}\int_\Sigma W(\nabla\psi(x)\mid\varphi(x))dx=\int_\Sigma \inf_{\zeta\in\Lambda_\psi^j(x)}W(\nabla\psi(x)\mid\zeta)dx.
$$
\end{lemma}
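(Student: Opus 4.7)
The inequality $\ge$ is immediate, since any $\varphi\in C(\overline{\Sigma};\Lambda_\psi^j)$ satisfies $W(\nabla\psi(x)\mid\varphi(x))\ge f(x):=\inf_{\zeta\in\Lambda_\psi^j(x)}W(\nabla\psi(x)\mid\zeta)$ pointwise. For the reverse inequality, fix $\delta>0$ and aim to produce $\varphi\in C(\overline{\Sigma};\Lambda_\psi^j)$ with $\int_\Sigma W(\nabla\psi\mid\varphi)\,dx\leq \int_\Sigma f\,dx+\delta$. Set $n(x):=\partial_1\psi(x)\land\partial_2\psi(x)$, so $\Lambda_\psi^j(x)=\{\zeta:n(x)\cdot\zeta\ge 1/j\}$ is a closed half-space; since $\psi\in C^1_*(\overline{\Sigma};\RR^3)$ one has $|n|\ge c_0>0$ on $\overline{\Sigma}$. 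Fix $M_0\ge 2/j$ and introduce the continuous \emph{reference} selection $\zeta^*(x):=M_0\,n(x)/|n(x)|^2$, which is bounded on $\overline{\Sigma}$ and satisfies $n(x)\cdot\zeta^*(x)=M_0$, so it lies strictly inside $\Lambda_\psi^j(x)$. The $p$-coercivity of $W$, its finiteness on $\{\det\ge 1/j\}$ by (\ref{D2}), and the upper bound $f(x)\le W(\nabla\psi(x)\mid\zeta^*(x))$ together guarantee that $f$ is bounded, that the infimum $f(x)$ is attained, and that any near-minimizer lies in a fixed ball $B_M\subset\RR^3$.

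The first step is to obtain a measurable almost-minimizer. The function $f$ is upper semicontinuous (for each $x_0$, an interior near-minimizer $\bar\zeta_0+t\,n(x_0)$ stays feasible and continuous in $x$, showing $\limsup_{x\to x_0}f(x)\le f(x_0)$), so it is measurable; the argmin multifunction $x\mapsto\{\zeta\in\Lambda_\psi^j(x):W(\nabla\psi(x)\mid\zeta)=f(x)\}$ has non-empty closed values and measurable graph, so Kuratowski--Ryll-Nardzewski/Castaing yields a measurable selection $\varphi_0:\overline{\Sigma}\to B_M$ with $W(\nabla\psi(x)\mid\varphi_0(x))=f(x)$. The second step converts $\varphi_0$ into a continuous selection via a Lusin plus convex-interpolation trick: given $\eps>0$, Lusin's theorem yields a compact $K\subset\overline{\Sigma}$ with $|\overline{\Sigma}\setminus K|<\eps$ on which $\varphi_0$ is continuous, and Tietze extends $\varphi_0\lfloor_{K}$ to $\tilde\varphi_0\in C(\overline{\Sigma};B_M)$. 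Since $\tilde\varphi_0$ may fall out of $\Lambda_\psi^j(x)$ off $K$, I correct by
\[
\varphi_\lambda(x):=\lambda\,\tilde\varphi_0(x)+(1-\lambda)\,\zeta^*(x),\qquad\lambda\in(0,1).
\]
Affineness of $\det(\nabla\psi(x)\mid\cdot)$ gives $n(x)\cdot\varphi_\lambda(x)=\lambda\,n(x)\cdot\tilde\varphi_0(x)+(1-\lambda)M_0$; as $n\cdot\tilde\varphi_0$ is uniformly bounded on $\overline{\Sigma}$, taking $M_0$ large enough and $\lambda$ close enough to $1$ ensures $n\cdot\varphi_\lambda\ge 1/j$ identically, so $\varphi_\lambda\in C(\overline{\Sigma};\Lambda_\psi^j)$.

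Finally, control the energy. By (\ref{D2}) and continuity, $W$ is uniformly continuous on the compact set $K_R:=\{F\in\MM^{3\times 3}:|F|\le R,\,\det F\ge 1/j\}$ for $R$ large enough that $(\nabla\psi\mid\varphi_0)$, $(\nabla\psi\mid\zeta^*)$, $(\nabla\psi\mid\varphi_\lambda)$ all lie in $K_R$. On $K$ one has $\varphi_\lambda-\varphi_0=(1-\lambda)(\zeta^*-\varphi_0)$, which is uniformly small, so $|W(\nabla\psi(x)\mid\varphi_\lambda(x))-W(\nabla\psi(x)\mid\varphi_0(x))|\le\eta$ once $\lambda$ is close enough to $1$; off $K$, $W(\nabla\psi\mid\varphi_\lambda)\le C$ for a constant independent of $\lambda,\eps$. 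Collecting,
\[
\int_\Sigma W(\nabla\psi\mid\varphi_\lambda)\,dx\le\int_\Sigma f\,dx+\eta|\Sigma|+C\eps,
\]
and the proof ends by letting first $\lambda\to 1$ (so $\eta\to 0$) and then $\eps\to 0$. The main obstacle is precisely this last energy bound: because $W$ is not convex in its last column, one cannot estimate $W$ of the convex combination $\varphi_\lambda$ by the convex combination of $W$'s, and the scheme succeeds only because the correction $(1-\lambda)(\zeta^*-\varphi_0)$ remains inside the compact region $K_R$, where uniform continuity of $W$ provided by (\ref{D2}) controls the resulting perturbation.
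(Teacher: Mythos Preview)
Your strategy differs from the paper's: there the lemma is obtained as a one-line application of an abstract interchange-of-infimum-and-integral theorem (Theorem~\ref{AHM}), after checking that $f(x,\zeta)=W(\nabla\psi(x)\mid\zeta)$ is Carath\'eodory, that $\Lambda_\psi^j$ is a nonempty convex closed-valued lower semicontinuous multifunction, and that (\ref{D2}) bounds $W$ along any convex combination of two continuous selections (since such a combination stays in $\{\det\ge 1/j\}$). You instead try to build a near-optimal continuous selection by hand via measurable selection, Lusin, Tietze, and a correction. That is a legitimate route, and it is essentially how such interchange theorems are proved, but your correction step has a gap.

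With $M_0$ fixed and $|\tilde\varphi_0|\le M$, the worst case off $K$ is $n(x)\cdot\tilde\varphi_0(x)\ge -\|n\|_\infty M=:-C_1$, hence
\[
n(x)\cdot\varphi_\lambda(x)\ \ge\ -\lambda C_1+(1-\lambda)M_0,
\]
and the feasibility requirement $n\cdot\varphi_\lambda\ge 1/j$ forces $\lambda\le (M_0-1/j)/(M_0+C_1)<1$. Thus for a fixed $M_0$ you \emph{cannot} send $\lambda\to 1$ while keeping $\varphi_\lambda\in C(\overline{\Sigma};\Lambda_\psi^j)$. If instead you let $M_0=M_0(\lambda)\to\infty$ so as to push $\lambda$ towards $1$, then $|\zeta^*|\sim M_0/c_0$ and the constraint above gives $(1-\lambda)M_0\ge 1/j+\lambda C_1$, so on $K$
\[
|\varphi_\lambda-\varphi_0|=(1-\lambda)|\zeta^*-\varphi_0|\ \ge\ \frac{(1-\lambda)M_0}{c_0}-(1-\lambda)M\ \ge\ \frac{1/j}{c_0}-(1-\lambda)M,
\]
which stays bounded away from $0$. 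Global feasibility and closeness to $\varphi_0$ on $K$ are therefore in genuine conflict for a \emph{global} convex combination, and your final limit ``first $\lambda\to 1$, then $\eps\to 0$'' cannot be carried out as written.

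The repair is easy and keeps your scheme intact: replace the $\lambda$-combination by the pointwise metric projection onto the half-space $\Lambda_\psi^j(x)$,
\[
\varphi(x):=\tilde\varphi_0(x)+\max\!\Big(0,\ \frac{1/j-n(x)\cdot\tilde\varphi_0(x)}{|n(x)|^2}\Big)\,n(x).
\]
This is continuous in $x$ (since $n$ is continuous and nonvanishing), satisfies $\varphi(x)\in\Lambda_\psi^j(x)$ for every $x$, coincides with $\varphi_0$ on $K$ (so $\int_K W(\nabla\psi\mid\varphi)=\int_K f$ exactly, no $\eta$ needed), and is bounded on $\overline{\Sigma}\setminus K$; hence (\ref{D2}) gives a uniform bound for $W(\nabla\psi\mid\varphi)$ there, and $\int_\Sigma W(\nabla\psi\mid\varphi)\le\int_\Sigma f+C\eps$ follows immediately.
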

To prove Lemma \ref{LemmeDeT>0} we need the following interchange theorem of infimum and integral (for a proof, see \cite[Corollary 5.4]{oah-jpm03}; see also \cite[\S 5.2 p. 60]{oah-jpm09a}). 
\begin{theorem}[\cite{oah-jpm03}]\label{AHM} Let $\Lambda:\overline{\Sigma}\dto\RR^3$ and let $f:\overline{\Sigma}\times\RR^3\to[0,+\infty]$. Assume that{\rm:} 
\begin{itemize}
\item[(H$_1$)]  $f$ is a Carath\'eodory integrand{\rm;}
\item[(H$_2$)] $\Lambda$ is a nonempty convex closed-valued lower semicontinuous multifunction{\rm;}
\item[(H$_3$)] 
$
\int_\Sigma \max_{\alpha\in[0,1]}f\big(x,\alpha\varphi(x)+(1-\alpha)\hat\varphi(x)\big)dx<+\infty.
$
for all $\varphi,\hat\varphi\in C(\overline{\Sigma};\Lambda)$.
\end{itemize}
Then,
$$
\inf_{\varphi\in C(\overline{\Sigma};\Lambda)}\int_{\Sigma}f\big(x,\varphi(x)\big)dx=\int_\Sigma\inf_{\zeta\in\Lambda(x)}f(x,\zeta)dx.
$$
\end{theorem}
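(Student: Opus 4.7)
The plan is to prove the two inequalities separately. The inequality $\inf_{\varphi\in C(\overline{\Sigma};\Lambda)}\int_{\Sigma}f(x,\varphi(x))dx\geq\int_\Sigma g(x)\,dx$, where $g(x):=\inf_{\zeta\in\Lambda(x)}f(x,\zeta)$, follows immediately from the pointwise bound $f(x,\varphi(x))\geq g(x)$ by integration and taking the infimum over $\varphi$. Note that $g$ is itself measurable under (H$_1$)--(H$_2$): a Castaing representation, produced by the Kuratowski--Ryll-Nardzewski selection theorem, gives countably many measurable selections whose values are dense in $\Lambda(x)$ for every $x$, and by continuity of $f(x,\cdot)$ the infimum is attained along this countable family, so $g$ is a countable infimum of measurable functions.

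For the reverse inequality, given $\varepsilon>0$ I aim to construct $\varphi_\varepsilon\in C(\overline{\Sigma};\Lambda)$ with $\int_\Sigma f(x,\varphi_\varepsilon(x))dx\leq\int_\Sigma g(x)dx+2\varepsilon$, via two stages. First, a \textbf{measurable selection stage}: the truncated multifunction $\Gamma_\varepsilon(x):=\{\zeta\in\Lambda(x):f(x,\zeta)\leq g(x)+\varepsilon/|\Sigma|\}$ is nonempty, closed-valued and measurable thanks to (H$_1$)--(H$_2$), so a measurable selection $\zeta_\varepsilon$ of $\Gamma_\varepsilon$ exists. Second, a \textbf{continuous approximation stage}: Michael's continuous selection theorem (available by (H$_2$) and paracompactness of $\overline{\Sigma}$) furnishes a reference $\hat\varphi\in C(\overline{\Sigma};\Lambda)$; a Lusin-type argument yields compact sets $K_n\subset\overline{\Sigma}$ with $|\overline{\Sigma}\setminus K_n|<1/n$ on which $\zeta_\varepsilon$ is continuous; a relative version of Michael's theorem extends $\zeta_\varepsilon|_{K_n}$ to a continuous selection $\tilde\zeta_{\varepsilon,n}\in C(\overline{\Sigma};\Lambda)$. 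Choosing a continuous cutoff $\alpha_n:\overline{\Sigma}\to[0,1]$ equal to $1$ on $K_n$ and to $0$ outside an open neighborhood $U_n\supset K_n$ with $|U_n\setminus K_n|<1/n$, I set
\[
\varphi_{\varepsilon,n}(x):=\alpha_n(x)\tilde\zeta_{\varepsilon,n}(x)+(1-\alpha_n(x))\hat\varphi(x),
\]
which belongs to $C(\overline{\Sigma};\Lambda)$ by convex-valuedness of $\Lambda$.

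The estimate of $\int_\Sigma f(x,\varphi_{\varepsilon,n}(x))dx$ then splits into three contributions. On $K_n$ we have $\varphi_{\varepsilon,n}=\zeta_\varepsilon$, hence $f(x,\varphi_{\varepsilon,n}(x))\leq g(x)+\varepsilon/|\Sigma|$ and this piece is bounded by $\int g+\varepsilon$. On the transition zone $U_n\setminus K_n$, $\varphi_{\varepsilon,n}$ is a convex combination of the two continuous selections $\tilde\zeta_{\varepsilon,n}$ and $\hat\varphi$, so (H$_3$) supplies an integrable majorant whose integral over a set of measure $<1/n$ tends to $0$ by absolute continuity of the Lebesgue integral. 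On $\overline{\Sigma}\setminus U_n$, $\varphi_{\varepsilon,n}=\hat\varphi$, and $\int f(x,\hat\varphi(x))dx<+\infty$ follows from (H$_3$) with both arguments equal to $\hat\varphi$, so absolute continuity again handles this piece. Sending $n\to\infty$ and then $\varepsilon\to0$ concludes.

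The main obstacle is the continuous-approximation stage: converting the purely measurable near-minimizer $\zeta_\varepsilon$ into a continuous selection of $\Lambda$ without losing near-optimality on most of $\overline{\Sigma}$, and while keeping $f$ integrable on the small residual set. This is precisely where all three hypotheses cooperate: (H$_1$) makes the level sets of $f$ open in $\zeta$ and therefore renders $\Gamma_\varepsilon$ measurable, (H$_2$) simultaneously unlocks the Kuratowski--Ryll-Nardzewski measurable selection and Michael's continuous selection, and (H$_3$) is the tailor-made assumption that produces the dominating integrable function needed to tame $f$ on the transition zone $U_n\setminus K_n$ where $\tilde\zeta_{\varepsilon,n}$ is blended with $\hat\varphi$.
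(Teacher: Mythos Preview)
The paper does not contain its own proof of this theorem: it is quoted from \cite{oah-jpm03} (with a pointer also to \cite{oah-jpm09a}) and used as a black box to prove Lemma~\ref{LemmeDeT>0}. So there is nothing in the paper to compare your argument against.

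On the merits of your sketch, the overall architecture (measurable near-minimizer, then Lusin plus Michael to upgrade to a continuous selection, then convex blending) is natural, but there is a genuine gap in the estimate on the transition zone $U_n\setminus K_n$. You bound $f(x,\varphi_{\varepsilon,n}(x))$ there by
\[
M_n(x):=\max_{\alpha\in[0,1]}f\big(x,\alpha\,\tilde\zeta_{\varepsilon,n}(x)+(1-\alpha)\hat\varphi(x)\big)
\]
and invoke (H$_3$) together with ``absolute continuity of the Lebesgue integral'' to conclude that $\int_{U_n\setminus K_n}M_n\to 0$. But $M_n$ depends on $n$ through the extension $\tilde\zeta_{\varepsilon,n}$, and (H$_3$) only guarantees $\int_\Sigma M_n<+\infty$ for each \emph{fixed} $n$; it yields neither a common integrable majorant nor any uniform integrability of the family $\{M_n\}_n$. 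Absolute continuity therefore does not apply, and the contribution of $U_n\setminus K_n$ is uncontrolled as $n\to\infty$. (Your treatment of $\overline\Sigma\setminus U_n$ is fine, because there $\varphi_{\varepsilon,n}=\hat\varphi$ is independent of $n$.)

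This is not a cosmetic issue: the circular dependency---the integrable bound from (H$_3$) depends on the continuous extension, which in turn depends on the compact set whose complement you are trying to make negligible---is exactly the obstacle that (H$_3$) is designed to overcome, but your use of it does not break the circle. A correct argument must organize the approximation so that the majorant produced by (H$_3$) does not change along the sequence, or must exploit (H$_3$) in a different way (for instance through a partition-of-unity construction with a \emph{fixed} finite family of continuous selections rather than an $n$-dependent Michael extension).
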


\begin{proof}[\bf Proof of Lemma \ref{LemmeDeT>0}]
Since $W$ is continuous, (H$_1$) is satisfied with $f(x,\zeta)=W(\nabla\psi(x)\mid\zeta)$. Furthermore, taking (\ref{Det>0Prop0}) into account, we see that (H$_2$) holds with $\Lambda=\Lambda^j_\psi$. On the other hand, given $\varphi,\hat\varphi\in C(\overline{\Sigma};\Lambda^j_\psi)$, it is clear that $\Det(\nabla\psi(x)\mid\alpha\varphi(x)+(1-\alpha)\hat\varphi(x))\geq{1\over j}$ for all $\alpha\in[0,1]$ and all $x\in\overline{\Sigma}$. Using (\ref{D2}) we can assert that there exists $c>0$ (depending only on $j,\psi,\varphi$ and $\hat\varphi$) such that $W(\nabla\psi(x)\mid\alpha\varphi(x)+(1-\alpha)\hat\varphi(x))\leq c$  for all $\alpha\in[0,1]$ and all $x\in\overline{\Sigma}$. Thus (H$_3$) is satisfied with $f(x,\zeta)=W(\nabla\psi(x)\mid\zeta)$ and $\Lambda=\Lambda^j_\psi$, and Lemma \ref{LemmeDeT>0} follows from Theorem \ref{AHM}. \end{proof}

\medskip

The following lemma gives a ``non-integral" representation for $\mathcal{I}$ on $C^1_*(\overline{\Sigma};\RR^3)$.
\begin{lemma}\label{IntermediateTheorDetNot}
If $W$ satisfies {\rm(\ref{D0})} and {\rm(\ref{D2})} and if $\psi\in C^1_*(\overline{\Sigma};\RR^3)$ then
$$
\mathcal{I}(\psi)=\inf_{j\geq 1}\inf_{\varphi\in C(\overline{\Sigma};\hat\Lambda^j_\psi)}\int_\Sigma W(\nabla\psi(x)\mid\varphi(x))dx.
$$
\end{lemma}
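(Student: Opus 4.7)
The plan is to combine Lemma \ref{LemmeDeT>0} with a dominated-convergence argument along the nondecreasing family $\{\Lambda^j_\psi\}_{j\geq 1}$. Set
\[
g_j(x):=\inf_{\zeta\in \Lambda^j_\psi(x)}W(\nabla\psi(x)\mid\zeta),
\]
so that Lemma \ref{LemmeDeT>0} identifies the right-hand side of the claimed equality with $\inf_{j\geq 1}\int_\Sigma g_j(x)\,dx$. Because $\Lambda^j_\psi(x)$ is nondecreasing in $j$, $g_j$ is nonincreasing in $j$; and by the second part of (\ref{Det>0Prop1}) together with the non-interpenetration constraint (D1) (which is the ambient hypothesis in Lemma \ref{RD-Lemma3} where this statement is invoked, and which is needed to identify the two infima below), one has
\[
\lim_{j\to\infty}g_j(x)=\inf_{\det(\nabla\psi(x)\mid\zeta)>0}W(\nabla\psi(x)\mid\zeta)=W_0(\nabla\psi(x))\qquad\forall x\in\overline\Sigma.
\]

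The easy inequality $\mathcal{I}(\psi)\leq\inf_j\int_\Sigma g_j\,dx$ is immediate from the pointwise bound $g_j(x)\geq W_0(\nabla\psi(x))$. For the reverse I need an $L^1$-dominator for the decreasing sequence $g_j$, and this is where the assumption $\psi\in C^1_*(\overline\Sigma;\RR^3)$ is decisive: by continuity and compactness,
\[
\delta_\psi:=\min_{x\in\overline\Sigma}|\partial_1\psi(x)\land\partial_2\psi(x)|>0.
\]
The continuous selection $\nu(x):=\partial_1\psi(x)\land\partial_2\psi(x)\in C(\overline\Sigma;\RR^3)$ satisfies $\det(\nabla\psi(x)\mid\nu(x))=|\nu(x)|^2\geq\delta_\psi^2$, so $\nu(x)\in\Lambda^{j_0}_\psi(x)$ for every $x\in\overline\Sigma$ as soon as $j_0\geq 1/\delta_\psi^2$. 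Consequently, for every $j\geq j_0$,
\[
0\leq g_j(x)\leq g_{j_0}(x)\leq W(\nabla\psi(x)\mid\nu(x)),
\]
and by (D2) the right-hand side is bounded by $c_{\delta_\psi^2}(1+|\nabla\psi(x)|^p+|\nu(x)|^p)$, a function continuous on the compact $\overline\Sigma$ and thus in $L^\infty(\Sigma)\subset L^1(\Sigma)$. Lebesgue's dominated convergence theorem then yields $\int_\Sigma g_j\,dx\to\int_\Sigma W_0(\nabla\psi)\,dx=\mathcal{I}(\psi)$, whence $\inf_{j\geq 1}\int_\Sigma g_j\,dx\leq\mathcal{I}(\psi)$.

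The only real obstacle is producing this $L^1$-dominator. Without the $C^1_*$ hypothesis there would be no uniform lower bound on $|\partial_1\psi\land\partial_2\psi|$ over $\overline\Sigma$, and because the constant $c_\delta$ in (D2) is allowed to blow up as $\delta\to 0^+$, no pointwise estimate of the form $g_j(x)\leq c_{1/j}(1+|\nabla\psi(x)|^p+|\zeta|^p)$ can be turned into a bound integrable uniformly in $j$. The immersion hypothesis $\psi\in C^1_*(\overline\Sigma;\RR^3)$ furnishes a single threshold $\delta=\delta_\psi^2$ that works on all of $\overline\Sigma$, which is precisely what makes the monotone passage to the limit in $j$ legitimate.
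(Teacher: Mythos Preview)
Your proof is correct and follows essentially the same route as the paper: apply Lemma~\ref{LemmeDeT>0} to interchange the infimum and the integral, produce an $L^1$ dominator from the cross product of $\partial_1\psi$ and $\partial_2\psi$ (the paper normalizes it to $\Phi(x)=\dfrac{\partial_1\psi\land\partial_2\psi}{|\partial_1\psi\land\partial_2\psi|^2}$ so that $\det(\nabla\psi\mid\Phi)=1$ and the dominator works already at $j=1$, and then invokes monotone rather than dominated convergence, but this is cosmetic). Your remark that (\ref{D1}) is implicitly needed to identify $\inf_{j}g_j$ with $W_0(\nabla\psi)$ is well taken; the paper's statement lists only (\ref{D0}) and (\ref{D2}), but the identification (\ref{equality}) in its proof does rely on the ambient hypothesis (\ref{D1}).
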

\begin{proof}[\bf Proof of Lemma \ref{IntermediateTheorDetNot}] It suffices to prove that
\begin{eqnarray}\label{ineQuality}
\mathcal{I}(\psi)\geq\inf_{j\geq 1}\inf_{\varphi\in C(\overline{\Sigma};\hat\Lambda^j_\psi)}\int_\Sigma W(\nabla\psi(x)\mid\varphi(x))dx.
\end{eqnarray}
Using Lemma \ref{LemmeDeT>0}, we obtain
\begin{equation}\label{inequality1}
\inf_{j\geq 1}\inf_{\varphi\in C(\overline{\Sigma};\hat\Lambda^j_\psi)}\int_\Sigma W(\nabla\psi(x)\mid\varphi(x))dx\leq\inf_{j\geq 1}\int_{\Sigma}\inf_{\zeta\in\Lambda^j_\psi(x)}W\big(\nabla \psi(x)\mid\zeta\big)dx.
\end{equation}
Consider the continuous function $\Phi:\overline{\Sigma}\to\RR^3$ defined by 
\begin{equation}\label{ExampleOfContinuousSelection}
\Phi(x):={{\partial_1 \psi(x)\land\partial_2 \psi(x)}\over \vert\partial_1\psi(x)\land\partial_2\psi(x)\vert^2}.
\end{equation}
 Then, $\det(\nabla \psi(x)\mid\Phi(x))=1$ for all $x\in\overline{\Sigma}$. Using {\rm (\ref{D2})} we deduce that there exists $c>0$ depending only on $p$ such that
$$
\int_\Sigma\inf_{\zeta\in\Lambda^{1}_\psi(x)}W(\nabla \psi(x)\mid\zeta)dx\leq c\big(|\Sigma|+\|\nabla \psi\|^p_{L^p(\Sigma;\MM^{3\times 2})}+\|\Phi\|^p_{L^p(\Sigma;\RR^3)}\big).
$$ 
It follows that $\inf_{\zeta\in\Lambda^{1}_\psi(\cdot)}W(\nabla \psi(\cdot)\mid\zeta)\in L^1(\Sigma)$. From (\ref{Det>0Prop0}) and (\ref{Det>0Prop1}), we see that $\{\inf_{\zeta\in\Lambda^j_\psi(\cdot)}W(\nabla \psi(\cdot)\mid\zeta)\}_{j\geq 1}$ is non-increasing and  
 \begin{equation}\label{equality}
\inf_{j\geq 1}\inf_{\zeta\in\Lambda^j_\psi(x)}W\big(\nabla \psi(x)\mid\zeta\big)=W_0\big(\nabla \psi(x)\big)
\end{equation}
for all $x\in \overline{\Sigma}$,
 and (\ref{ineQuality}) follows from (\ref{inequality1}) and ({\ref{equality}}) by using the Lebesgue monotone convergence theorem.
\end{proof}

\medskip

We can now prove Lemma \ref{RD-Lemma3}. As $\Gamma\hbox{-}\limsup_{\eps\to 0}\mathcal{I}_\eps$ is lower semicontinuous with respect to the strong topology of $L^{p}(\Sigma;\RR^3)$, it is sufficient to prove that 
\begin{equation}\label{limsupequality}
\limsup_{\eps\to 0}\mathcal{I}_\eps(\psi)\leq\mathcal{I}(\psi)
\end{equation}  
for all $\psi\in C^1_*(\overline{\Sigma};\RR^3)$. Given $\psi\in C^1_*(\overline{\Sigma};\RR^3)$, fix any   $j\geq 1$ and any  $n\geq 1$. Using Lemma \ref{IntermediateTheorDetNot} we obtain the existence of $\varphi\in C(\overline{\Sigma};\hat\Gamma^j_\psi)$ such that 
\begin{equation}\label{mediainequality}
\int_\Sigma W(\nabla \psi(x)\mid\varphi(x))dx\leq\mathcal{I}(\psi)+{1\over n}.
\end{equation}
Let $\{\varphi_k\}_{k\geq 1}\subset C^\infty(\overline{\Sigma};\RR^3)$ be such that 
\begin{equation}\label{uniformityconvergence}
\varphi_k\to\varphi\hbox{ uniformly as }k\to+\infty.
\end{equation}
We claim that:
\begin{eqnarray}\label{CoNdItioN(c1)}
&&\det(\nabla \psi(x)\mid\varphi_k(x))\geq{1\over 2j}\hbox{ for all }x\in \overline{\Sigma},\hbox{ all }k\geq k_\psi\hbox{ and some }k_\psi\geq 1;\\
&&\lim\limits_{k\to+\infty}\int_\Sigma W(\nabla\psi(x)\mid\varphi_k(x))dx=\int_\Sigma W(\nabla\psi(x)\mid\varphi(x))dx.\label{CoNdItioN(c2)}
\end{eqnarray}
Indeed, setting $\mu_\psi:=\sup_{x\in V}|\partial_1\psi(x)\land\partial_2\psi(x)|=\max_{i\in I}|\xi_{i,1}\land\xi_{i,2}|$ ($\mu_\psi>0$) and using (\ref{uniformityconvergence}), we deduce that there exists $k_\psi\geq 1$ such that
\begin{equation}\label{supequality}
\sup_{x\in\overline{\Sigma}}|\varphi_k(x)-\varphi(x)|<{1\over 2j\mu_\psi}
\end{equation}
for all $k\geq k_\psi$. Let $x\in \overline{\Sigma}$ and let  $k\geq k_\psi$. As $\varphi\in C(\overline{\Sigma};\hat\Gamma^j_\psi)$ we have
\begin{equation}\label{supequality1}
\Det(\nabla\psi(x)\mid\varphi_k(x))\geq{1\over j}-\Det(\nabla\psi(x)\mid\varphi_k(x)-\varphi(x)).
\end{equation}
Noticing that $\Det(\nabla\psi(x)\mid\varphi_k(x)-\varphi(x))\leq|\partial_1\psi(x)\land\partial_2\psi(x)||\varphi_k(x)-\varphi(x)|$, from (\ref{supequality}) and (\ref{supequality1}) we deduce that
$
\Det(\nabla\psi(x)\mid\varphi_k(x))\geq{1\over 2j}
$
and (\ref{CoNdItioN(c1)}) is proved. Combining (\ref{CoNdItioN(c1)}) and (\ref{D2}) we see that
$
\sup_{k\geq k_\psi}W(\nabla \psi(\cdot)\mid\varphi_k(\cdot))\in L^1(\Sigma).
$ 
As $W$ is continuous we have 
$
\lim_{k\to+\infty}W(\nabla \psi(x)\mid\varphi_k(x))=W(\nabla \psi(x)\mid\varphi(x))
$
for all $x\in \overline{\Sigma}$, and  (\ref{CoNdItioN(c2)}) follows by the Lebesgue dominated convergence theorem.

Fix any $k\geq k_\psi$ et define the continuous function  $\theta:]-{1\over 2},{1\over 2}[\to\RR$ by 
$
\theta(x_3):=\inf_{x\in \overline{\Sigma}}\Det(\nabla\psi(x)+x_3\nabla\varphi_k(x)\mid\varphi_k(x)).
$
By (\ref{CoNdItioN(c1)}) we have $\theta(0)\geq{1\over 2j}$ and so there exists $\eta_\psi\in]0,{1\over 2}[$ such that $\theta(x_3)\geq{1\over 4j}$ for all $x_3\in]-\eta_\psi,\eta_\psi[$. Let $\phi_k:\Sigma_1\to\RR$ be given by
$
\phi_k(x,x_3):=\psi(x)+x_3\varphi_k(x).
$
From the above it follows that
\begin{eqnarray}\label{CoNdItioN(c3)}
&& \Det(\nabla \phi_k(x,\eps x_3))\geq{1\over 4j}\hbox{ for all }\eps\in]0,\eta_\psi[\hbox{ and all }(x,x_3)\in \overline{\Sigma}\times]-{1\over 2},{1\over 2}[.
\end{eqnarray}
As in the proof of (\ref{CoNdItioN(c2)}), combining (\ref{CoNdItioN(c3)}) et (\ref{D2}) and using the continuity of $W$, we obtain
\begin{equation}\label{finalequality}
\lim_{\eps\to 0}I_\eps(\phi_k)=\lim_{\eps\to 0}\int_{\Sigma_1} W(\nabla \phi_k(x,\eps x_3))dxdx_3=\int_\Sigma W(\nabla \psi(x)\mid\varphi_k(x))dx.
\end{equation}

Since $\pi_\eps(\phi_k)=\psi$, $\mathcal{I}_\eps(\psi)\leq I_\eps(\phi_k)$ for all $\eps>0$ and all $k\geq k_\psi$. Using (\ref{finalequality}), (\ref{CoNdItioN(c2)}) and (\ref{mediainequality}), we deduce that
$
\limsup_{\eps\to 0}\mathcal{I}_\eps(\psi)\leq\mathcal{I}(\psi)+{1\over n},
$
and (\ref{limsupequality}) follows by letting $n\to+\infty$. $\blacksquare$


\subsubsection{Proof of Lemma \ref{BBLemma2}} In what follows $N\leq m$ and given $F\in\MM^{m\times N}$,  $0\leq v_1(F)\leq\cdots\leq v_N(F)$ denote the singular values of $F$. Set 
$$
v(F):=\prod_{i=1}^Nv_i(F).
$$
When $N=2$ and $m=3$, it easy to check that $v(F)=|F_1\land F_2|$ for all $F=(F_1\mid F_2)\in\MM^{3\times 2}$. Recalling that any finite rank-one convex function is continuous, we see that Lemma \ref{BBLemma2} is a direct consequence of the following theorem.

\begin{theorem}\label{SingularValues}
Assume that 
\begin{equation}\label{RWaCCbyBBCondition}
\exists\alpha,\beta>0\ \forall F\in\MM^{m\times N}\ \big(v(F)\geq \alpha\then W(F)\leq\beta(1+|F|^p\big).
\end{equation}
Then $\R W$ is of $p$-polynomial growth, i.e., 
$$
\exists c>0\ \ \forall F\in\MM^{m\times N}\ \R W(F)\leq c(1+|F|^p).
$$
\end{theorem}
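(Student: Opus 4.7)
The plan is to reduce to the case of a diagonal $F$ by the singular value decomposition and invariance of the rank-one convex envelope, then bound $\R W(F)$ by iterating $N$ rank-one laminations along the coordinate directions $e_i\otimes e_i$, choosing the amplitudes so that each of the resulting $2^N$ matrices satisfies $v\geq\alpha$ and therefore lies in the region where the hypothesis provides polynomial growth.

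First, I would perform the orthogonal reduction. Given $F\in\MM^{m\times N}$, write its SVD $F=U\Sigma V^T$ with $U\in O(m)$, $V\in O(N)$, and $\Sigma\in\MM^{m\times N}$ the matrix having $\Sigma_{ii}=v_i(F)$ for $i=1,\ldots,N$ and all other entries zero. Set $\Phi(G):=UGV^T$ and $\tilde W:=W\circ\Phi$. Since $\Phi$ is linear, invertible, preserves singular values and Frobenius norm, and sends rank-one matrices to rank-one matrices (because $U(a\otimes b)V^T=(Ua)\otimes(Vb)$), one obtains the identity $\R\tilde W=(\R W)\circ\Phi$ together with the fact that $\tilde W$ still satisfies \eqref{RWaCCbyBBCondition} with the same constants $\alpha,\beta$. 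Hence the task reduces to bounding $\R\tilde W(\Sigma)$.

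Next, I would set up the laminates. Put $c:=\alpha^{1/N}$ and $t_i:=v_i(F)+c$. For each $\epsilon=(\epsilon_1,\ldots,\epsilon_N)\in\{-1,+1\}^N$ define
$$\Sigma_\epsilon := \Sigma+\sum_{i=1}^{N}\epsilon_i\, t_i\, e_i\otimes e_i,$$
so that $\Sigma_\epsilon$ is again diagonal with $i$-th singular value $|v_i+\epsilon_i t_i|\in\{c,\,2v_i+c\}$; in particular $v(\Sigma_\epsilon)\geq c^N=\alpha$, and \eqref{RWaCCbyBBCondition} gives $\tilde W(\Sigma_\epsilon)\leq\beta(1+|\Sigma_\epsilon|^p)$. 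Using $|\Sigma_\epsilon|^2\leq\sum_i(2v_i+c)^2\leq 8|F|^2+2Nc^2$ one then obtains $\tilde W(\Sigma_\epsilon)\leq C_1(1+|F|^p)$ for some constant $C_1=C_1(N,p,\alpha,\beta)$.

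Finally, I would glue the $\Sigma_\epsilon$'s together via the iterated definition of $\R_i W$. Specializing the recursion $\R_{i+1}\tilde W(G)\leq\tfrac12\R_i\tilde W(G-s\otimes b)+\tfrac12\R_i\tilde W(G+s\otimes b)$ (taking $t=1/2$ and $a=2s$) and applying it $N$ times, at stage $i$ splitting each current matrix by $\pm t_i\,e_i\otimes e_i$, yields
$$\R W(F)=\R\tilde W(\Sigma)\leq\R_N\tilde W(\Sigma)\leq\frac{1}{2^N}\sum_{\epsilon\in\{\pm 1\}^N}\tilde W(\Sigma_\epsilon)\leq C_1(1+|F|^p),$$
which is exactly the claim. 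The main obstacle, beyond some bookkeeping, is the orthogonal invariance identity $\R\tilde W=(\R W)\circ\Phi$ (proved by checking that $(\R W)\circ\Phi$ is rank-one convex and bounded above by $\tilde W$, and symmetrically for $\Phi^{-1}$); once that is in place, the SVD reduction and the $N$-fold lamination along coordinate rank-one directions are routine.
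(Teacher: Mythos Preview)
Your proof is correct and follows essentially the same strategy as the paper: reduce via the singular value decomposition to a diagonal matrix, then perform iterated rank-one laminations along the coordinate directions $e_i\otimes e_i$ to reach matrices with $v\geq\alpha$, where the hypothesis applies. The paper carries the orthogonal conjugation $PJQ^{\rm T}(\cdot)Q$ through the computation rather than invoking the invariance identity $\R(W\circ\Phi)=(\R W)\circ\Phi$, and it laminates only in the $k\leq N$ directions where $v_i(F)<\alpha$ (choosing weights $t_j$ so that the endpoints are exactly $\pm\alpha$, after normalizing to $\alpha\geq1$), whereas you laminate symmetrically in all $N$ directions with amplitude $v_i+\alpha^{1/N}$ --- but these are cosmetic variants of the same argument.
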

\begin{proof}[\bf Proof of theorem \ref{SingularValues}]
Without loss of generality we can assume that $\alpha\geq 1$. It is clear that $\R W(F)\leq\beta(1+|F|^p)$ for all $F\in\MM^{m\times N}$ such that $v(F)\geq \alpha$. Consider then $F\in\MM^{m\times N}$ such that $v(F)<\alpha$.  Let $P\in\OO(m)$ be such that 
$$
F=PJU,
$$
 where $U:=\sqrt{F^{\rm T}F}$ and $J=(J_{ij})\in\MM^{m\times N}$ with $J_{ij}=0$ if $i\not=j$ and $J_{ii}=1$, and let $Q\in\SO(N)$ be such that 
 $$
 U=Q^{\rm T}\diag(v_1(F),\cdots,v_N(F)) Q.
 $$
   Then:
\begin{itemize}
\item[\SMALL$\blacklozenge$] $F=PJQ^{\rm T}\diag(v_1(F),\cdots,v_N(F)) Q$;
\item[\SMALL$\blacklozenge$] $|F|^2=\sum_{i=1}^Nv_i^2(F)$.
\end{itemize}
Since $v(F)<\alpha$, there exists $1\leq i_1\leq\cdots\leq i_k\leq N$ with $k\in\{1,\cdots,N\}$ such that $v_{i_1}(F)<\alpha,\cdots,v_{i_k}(F)<\alpha$ (and $v_i(F)\geq\alpha$ for all $i\not\in\{i_1,\cdots,i_k\}$). For every  $j\in\{1,\cdots,k\}$, let $t_j\in]0,1[$ be such that 
$
v_{i_j}(F)=(1-t_j)(-\alpha)+t_j\alpha.
$
Then
\begin{eqnarray*}
\diag(v_1(F),\cdots,v_{i_1}(F),\cdots,v_N(F))&=&(1-t_1)\diag(v_1(F),\cdots,-\alpha,\cdots,v_N(F))\\ &&+\ t_1\diag(v_1(F),\cdots,\alpha,\cdots,v_N(F)),
\end{eqnarray*}
and so $F=(1-t_1)F_{1}^-+t_1F^+_{1}$ with:
\begin{itemize}
\item[\SMALL$\blacklozenge$] $F^-_1:=PJQ^{\rm T}\diag(v_1(F),\cdots,-\alpha,\cdots,v_N(F)) Q$;
\item[\SMALL$\blacklozenge$] $F^+_1:=PJQ^{\rm T}\diag(v_1(F),\cdots,\alpha,\cdots,v_N(F)) Q$;
\item[\SMALL$\blacklozenge$] $\rank(F^-_1-F^+_1)=1$.
\end{itemize}
Moreover, we have
\begin{eqnarray*}
&&\diag(v_1(F),\cdots,-\alpha,\cdots,v_{i_2}(F),\cdots,v_N(F))=
(1-t_2)\diag(v_1(F),\cdots,-\alpha,\\ &&\cdots,-\alpha,\cdots,v_N(F))
+t_2\diag(v_1(F),\cdots,-\alpha,\cdots,\alpha,\cdots,v_N(F)),
\end{eqnarray*}
hence $F^-_1=(1-t_2)F^{-,-}_2+t_2F^{-,+}_2$ with:
\begin{itemize}
\item[\SMALL$\blacklozenge$] $F^{-,-}_2:=PJQ^{\rm T}\diag(v_1(F),\cdots,-\alpha,\cdots,-\alpha,\cdots,v_N(F)) Q$;
\item[\SMALL$\blacklozenge$] $F^{-,+}_2:=PJQ^{\rm T}\diag(v_1(F),\cdots,-\alpha,\cdots,\alpha,\cdots,v_N(F)) Q$;
\item[\SMALL$\blacklozenge$] $\rank(F^{-,-}_2-F^{-,+}_2)=1$.
\end{itemize}
In the same manner, we obtain $F^+_1=(1-t_2)F^{+,-}_2+t_2F^{+,+}_2$ with:
\begin{itemize}
\item[\SMALL$\blacklozenge$] $F^{+,-}_2:=PJQ^{\rm T}\diag(v_1(F),\cdots,\alpha,\cdots,-\alpha,\cdots,v_N(F)) Q$;
\item[\SMALL$\blacklozenge$] $F^{+,+}_2:=PJQ^{\rm T}\diag(v_1(F),\cdots,\alpha,\cdots,\alpha,\cdots,v_N(F)) Q$;
\item[\SMALL$\blacklozenge$] $\rank(F^{+,-}_2-F^{+,+}_2)=1$.
\end{itemize}
We continue in this fashion obtaining a finite sequence  $\{F_j^{\sigma}\}_{j\in\{1,\cdots,k\}}^{\sigma\in\mathfrak{S}_j}\subset\MM^{m\times N}$, where $\mathfrak{S}_j$ denotes the set of all  maps  $\sigma:\{1,\cdots,j\}\to\{-,+\}$, with the following properties:
\begin{itemize}
\item[\SMALL$\blacklozenge$]  $F_j^\sigma=PJQ^{\rm T}\diag(v_1(F),\cdots,\sigma(1)\alpha,\cdots,\sigma(j)\alpha,\cdots,v_N(F))$; 
\item[\SMALL$\blacklozenge$] if $\sigma(j)\not=\sigma^\prime(j)$ and $\sigma(l)=\sigma^\prime(l)$ for all $l\in\{1,\cdots,j-1\}$ then $\rank(F_j^\sigma-F_j^{\sigma^\prime})=1$;
\item[\SMALL$\blacklozenge$] if $\sigma(1)\not=\sigma^\prime(1)$ then $F=(1-t_1)F_1^\sigma+t_1F^{\sigma^\prime}_1$;
\item[\SMALL$\blacklozenge$] if $\sigma^\prime(j+1)\not=\sigma^{\prime\prime}(j+1)$ and $\sigma^\prime(l)=\sigma^{\prime\prime}(l)=\sigma(l)$ for all $l\in\{1,\cdots,j\}$, then $F_j^\sigma=(1-t_{j+1})F_{j+1}^{\sigma^\prime}+t_{j+1}F_{j+1}^{\sigma^{\prime\prime}}$.\end{itemize}
It follows that:
\begin{itemize}
\item[\SMALL$\blacklozenge$] if $\sigma(1)\not=\sigma^\prime(1)$ then $\R W(F)\leq\R W(F^\sigma_1)+\R W(F^{\sigma^\prime}_1)$; \item[\SMALL$\blacklozenge$] if $\sigma^\prime(j+1)\not=\sigma^{\prime\prime}(j+1)$ and $\sigma^\prime(l)=\sigma^{\prime\prime}(l)=\sigma(l)$ for all $l\in\{1,\cdots,j\}$, then $\R W(F^\sigma_j)\leq\R W(F_{j+1}^{\sigma^\prime})+\R W(F_{j+1}^{\sigma^{\prime\prime}})$. \end{itemize}
Hence
$$
\R W(F)\leq\sum_{\sigma\in\mathfrak{S}_k}\R W(F^\sigma_k).
$$
Moreover, we have
\begin{eqnarray*}
v(F^\sigma_k)&=&\big|\det(\diag(v_1(F),\cdots,\sigma(1)\alpha,\cdots,\sigma(k)\alpha,\cdots,v_N(F)))\big|\\
&=&\alpha^k\prod_{i\not\in\{i_1,\cdots i_k\}}v_i(F),
\end{eqnarray*}
and so $v(F^\sigma_k)\geq \alpha^N\geq\alpha$ for all $\sigma\in\mathfrak{S}_k$. Using (\ref{RWaCCbyBBCondition}) we deduce that 
$$
\R W(F)\leq\sum_{\sigma\in\mathfrak{S}_k}\beta(1+|F^\sigma_k|^p).
$$
But
\begin{eqnarray*}
|F^\sigma_k|^2&=&\big|\diag(v_1(F),\cdots,\sigma(1)\alpha,\cdots,\sigma(k)\alpha,\cdots,v_N(F))\big|^2\\
&=&k\alpha^2+\sum_{i\not\in\{i_1,\cdots,i_k\}}v_i^2(F)\leq N\alpha^2+|F|^2,
\end{eqnarray*} 
hence
$$
\R W(F)\leq \sum_{\sigma\in\mathfrak{S}_k}\beta\big(1+2^{p\over 2}(N^{p\over 2}\alpha^p+|F|^p)\big)\leq c(1+|F|^p)
$$
with $c=2^N\beta(1+2^{p\over 2}N^{p\over 2}\alpha^p)$, which is the desired conclusion.
\end{proof}
\begin{remark}\label{RemarkSingularValues}
When $m=N$, it is easy to check that $v(F)=|\det F|$ for all $F\in\MM^{N\times N}$. Consequently, if $W$ satisifies (\ref{D}), i.e., 
$$
\exists\alpha,\beta>0\ \forall F\in\MM^{N\times N}\ \big(|\det F|\geq\alpha\then W(F)\leq\beta(1+|F|^p)\big),
$$
then $\R W$ is of $p$-polynomial growth, i.e., $\exists c>0\ \forall F\in\MM^{N\times N}\ \Z W(F)\leq c(1+|F|^p)$.
\end{remark}


\subsubsection{Proof of the inequality (\ref{I-0})} It suffices to prove the inequalities (\ref{I-1}), (\ref{I-2}) and (\ref{I-3}). On the other hand, it clear that:
\begin{itemize}
\item[\SMALL$\blacklozenge$] if
\begin{equation}\label{D0bis}
\overline{\mathcal{I}}_{\rm diff_*}(\psi)\leq\int_{\Sigma}W_0(\nabla \psi(x))dx\hbox{ for all }\psi\in \Aff_{\rm li}^{\rm reg}(\Sigma;\RR^3)
\end{equation}
then (\ref{D0}) holds;
\item[\SMALL$\blacklozenge$] if
\begin{equation}\label{D1bis}
{\mathcal{I}}_{\Aff_{\rm li}^{\rm reg}}(\psi)\leq\int_\Sigma\mathcal{R}W_0(\nabla\psi(x))dx\hbox{ for all }\psi\in\Aff_{\rm li}^{\rm reg}(\Sigma;\RR^3)
\end{equation}
then (\ref{D1}) holds;
\item[\SMALL$\blacklozenge$]if
\begin{equation}\label{D2bis}
\overline{\mathcal{RI}}_{\Aff_{\rm li}^{\rm reg}}(\psi)\leq\int_{\Sigma}\mathcal{R}W_0(\nabla \psi(x))dx\hbox{ for all }\psi\in W^{1,p}(\Sigma;\RR^3)
\end{equation}
then (\ref{D2}) holds.
\end{itemize}
Hence, we only need to show (\ref{D0bis}), (\ref{D1bis}) and (\ref{D2bis}).

\begin{proof}[\bf Proof of (\ref{D0bis})]
Let $\psi\in \Aff_{\rm li}^{\rm reg}(\Sigma;\RR^3)$. By Theorem \ref{BBBapproxTheo} there exists $\{\psi_n\}_{n\geq 1}\subset C^1_*(\overline{\Sigma};\RR^3)$ such that  (\ref{BBB1}) and (\ref{BBB2}) holds and $\nabla \psi_n(x)\to\nabla \psi(x)$ a.e. in $\Sigma$. As $W_0$ satisfies (\ref{P01}), i.e., $W_0$ is continuous, we have 
$$
\lim_{n\to +\infty}W_0\big(\nabla \psi_n(x)\big)=W_0\big(\nabla \psi(x)\big)\;\hbox{ a.e. in }\Sigma.
$$
Using (\ref{P1}) together with (\ref{BBB2}) we deduce that there exists $c>0$ such that for every $n\geq 1$ and every measurable set  $A\subset\Sigma$,
$$
\int_A W_0\big(\nabla \psi_n(x)\big)dx\leq c\Big(|A|+\int_A|\nabla \psi_n(x)-\nabla \psi(x)|^pdx+\int_A|\nabla \psi(x)|^pdx\Big).
$$
But $\nabla \psi_n\to\nabla \psi$ in $L^p(\Sigma;\MM^{3\times 2})$ by (\ref{BBB1}), hence $\{W_0(\nabla \psi_n(\cdot))\}_{n\geq 1}$ is absolutely uniformly integrable. Using the Vitali theorem, we obtain 
$$
\lim_{n\to+\infty}\int_{\Sigma}W_0(\nabla \psi_n(x))dx=\int_{\Sigma}W_0(\nabla \psi(x))dx,
$$ 
and (\ref{D0bis}) follows.
\end{proof}

\begin{proof}[\bf Proof of (\ref{D2bis})]
Let $\psi\in W^{1,p}(\Sigma;\RR^3)$. By Theorem \ref{TheoremGE} there exists $\{\psi_n\}_{n\geq 1}\subset\Aff_{\rm li}^{\rm reg}(\Sigma;\RR^3)$ such that $\nabla \psi_n\to\nabla \psi$ in $L^p(\Sigma;\RR^3)$ and $\nabla \psi_n(x)\to\nabla \psi(x)$ a.e. in $\Sigma$. Taking Lemma \ref{BBLemma2} into account, from the Vitali theorem, we deduce that 
$$
\lim_{n\to+\infty}\int_{\Sigma}\mathcal{R}W_0(\nabla \psi_n(x))dx=\int_{\Sigma}\mathcal{R}W_0(\nabla \psi(x))dx,
$$
and (\ref{D2bis}) follows.
\end{proof}

\begin{proof}[\bf Proof of (\ref{D1bis})]
We begin with some preliminaries: mainly, we state five lemmas. The proof of the first lemma (which is due to Kohn and Strang) will be omitted while the four others lemma will be proved below. Define the sequence $\{\R_i W_0\}_{i\geq 0}$ by $\R_0 W_0=W_0$ and for every $i\geq 1$ and every $\xi\in\MM^{3\times 2}$,
$$
\R_{i+1}W_0(\xi):=\infff\limits_{t\in[0,1]}\big\{(1-t)\R_i W_0(\xi-t a\otimes b)+t\R_i W_0(\xi+(1-t)a\otimes b)\big\}.
$$
 \begin{lemma}[\cite{kohn-strang86}]\label{LeMmA-A3}
$\R_{i+1} W_0\leq \R_iW_0\hbox{ for all }i\geq 0\hbox{ and }\R W_0=\inf_{i\geq 0} \R_i W_0$.
\end{lemma}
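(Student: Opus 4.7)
The plan is to treat the two claims of the lemma separately. For the monotonicity $\R_{i+1}W_0\leq\R_i W_0$, I will simply test the infimum defining $\R_{i+1}W_0(\xi)$ with $a=0$ (or $b=0$) and any $t\in[0,1]$; then $a\otimes b=0$ and the bracketed expression collapses to $(1-t)\R_i W_0(\xi)+t\R_i W_0(\xi)=\R_i W_0(\xi)$. Consequently, $g:=\inf_{i\geq 0}\R_i W_0$ is the pointwise non-increasing limit of $\{\R_i W_0\}_{i\geq 0}$.

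To identify $g$ with $\R W_0$, I will prove the two inequalities separately. First, $\R_i W_0\geq\R W_0$ for all $i$ by induction. The base case $\R_0 W_0=W_0\geq\R W_0$ is just the definition of the rank-one convex envelope. For the inductive step, I exploit the rank-one convexity of $\R W_0$: since $\xi=(1-t)(\xi-ta\otimes b)+t(\xi+(1-t)a\otimes b)$ with $(\xi+(1-t)a\otimes b)-(\xi-ta\otimes b)=a\otimes b$ of rank at most one,
$$
(1-t)\R_i W_0(\xi-ta\otimes b)+t\R_i W_0(\xi+(1-t)a\otimes b)\geq\R W_0(\xi).
$$
Taking the infimum over $a,b,t$ yields $\R_{i+1}W_0(\xi)\geq\R W_0(\xi)$, hence $g\geq\R W_0$.

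For the reverse inequality $g\leq\R W_0$, I will show that $g$ itself is rank-one convex and bounded above by $W_0$, so that it lies below the rank-one convex envelope $\R W_0$ by definition. The bound $g\leq W_0=\R_0 W_0$ is automatic. For rank-one convexity, given $\xi=(1-t)\eta+t\zeta$ with $\zeta-\eta=a\otimes b$, I may assume $g(\eta),g(\zeta)<+\infty$ (otherwise the inequality is trivial). By the monotonicity proved in the first step, for any $\eps>0$ a single index $i$ can be chosen so that simultaneously $\R_i W_0(\eta)\leq g(\eta)+\eps$ and $\R_i W_0(\zeta)\leq g(\zeta)+\eps$. Writing $\eta=\xi-ta\otimes b$ and $\zeta=\xi+(1-t)a\otimes b$, the definition of $\R_{i+1}W_0$ gives
$$
g(\xi)\leq\R_{i+1}W_0(\xi)\leq(1-t)\R_i W_0(\eta)+t\R_i W_0(\zeta)\leq(1-t)g(\eta)+tg(\zeta)+\eps,
$$
and letting $\eps\to 0$ completes the argument.

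The only (mild) subtlety is coordinating the $\eps$-approximation at two distinct points by a single index $i$; this is handled by the monotonicity established in the first step. Everything else is a direct unfolding of the definitions of $\R_{i+1}W_0$ and of the rank-one convex envelope.
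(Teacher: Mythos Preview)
Your proof is correct and is in fact the standard argument for this Kohn--Strang representation. Note that the paper itself does not prove this lemma: it explicitly omits the proof and attributes the result to \cite{kohn-strang86}, so there is no ``paper's own proof'' to compare against beyond the citation. Your argument is essentially the one Kohn and Strang give.
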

Fix any $i\geq 0$ and any $\psi\in\Aff_{\rm li}^{\rm reg}(\Sigma;\RR^3)$.  Then, there exists a finite family $\{V_j\}_{j\in J}$ of open disjoint subsets of $\Sigma$ such that $|\Sigma\setminus\cup_{j\in J}V_j|=0$ for all $j\in J$ and for every  $j\in J$, $|\partial V_j|=0$ and $\nabla \psi(x)=\xi_j$ in $V_j$ with $\xi_j\in\MM^{3\times 2}$. As $\psi$ is locally injective we have ${\rm rang}(\xi_j)=2$ for all $j\in J$. Fix any $j\in J$. 
\begin{lemma}\label{LeMmA-A4}
$\R_i W_0$ is continuous.
\end{lemma}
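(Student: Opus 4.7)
I would prove Lemma \ref{LeMmA-A4} by induction on $i\geq 0$. The base case is immediate: $\R_0 W_0 = W_0$ is continuous as a map to $[0,+\infty]$ by \eqref{P0}. For the inductive step, I assume $\R_i W_0$ is continuous and aim to show $\R_{i+1} W_0$ is continuous, proving upper and lower semicontinuity separately.

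The preliminary observation is that $p$-coercivity passes through each one-step rank-one relaxation. Since $W_0(\xi)\geq C|\xi|^p$ (inherited from the $p$-coercivity of $W$), convexity of $|\cdot|^p$ yields, for any $t\in[0,1]$ and any rank-one direction $a\otimes b$,
$$(1-t)|F-ta\otimes b|^p + t|F+(1-t)a\otimes b|^p \geq |F|^p.$$
Combined with the inductive hypothesis, this gives $\R_{i+1}W_0(\xi)\geq C|\xi|^p$. In particular, once one has an upper bound on $\R_{i+1}W_0(F)$, the admissible parameters $(t,a\otimes b)$ in the defining infimum are confined to a bounded set.

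For upper semicontinuity I would fix $F_0$ and $\delta>0$ and choose $(t^*,a^*,b^*)$ such that the rank-one combination
$$g(F,t,a,b):=(1-t)\R_i W_0(F-ta\otimes b) + t\R_i W_0(F+(1-t)a\otimes b)$$
satisfies $g(F_0,t^*,a^*,b^*)\leq \R_{i+1}W_0(F_0)+\delta$. For any $F_n\to F_0$, continuity of $\R_i W_0$ (as an $[0,+\infty]$-valued map) at $F_0\pm\cdot$ gives $g(F_n,t^*,a^*,b^*)\to g(F_0,t^*,a^*,b^*)$, and since $\R_{i+1}W_0(F_n)\leq g(F_n,t^*,a^*,b^*)$, taking $\limsup$ and then $\delta\to 0$ yields the desired inequality.

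For lower semicontinuity I would take $F_n\to F_0$ and assume $\liminf_n \R_{i+1}W_0(F_n)<+\infty$ (otherwise the claim is trivial). Picking $(t_n,a_n,b_n)$ with $g(F_n,t_n,a_n,b_n)\leq \R_{i+1}W_0(F_n)+1/n$, the coercivity bound on $\R_i W_0$ forces the two arguments $F_n-t_na_n\otimes b_n$ and $F_n+(1-t_n)a_n\otimes b_n$ to be weighted-bounded in norm, from which I would extract a convergent subsequence of $(t_n, a_n\otimes b_n)$. Passing to the limit and applying continuity of $\R_i W_0$ at both shifted arguments produces $(t_*,a_*,b_*)$ with $g(F_0,t_*,a_*,b_*)\leq\liminf_n \R_{i+1}W_0(F_n)$, so $\R_{i+1}W_0(F_0)\leq\liminf_n \R_{i+1}W_0(F_n)$. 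The main obstacle is the degenerate regime $t_n\to 0$ or $t_n\to 1$, where only one of the two terms survives in the limit and the rank-one factor $a_n\otimes b_n$ may fail to remain bounded; one must verify by hand that in such a case the surviving term still suffices to estimate $\R_{i+1}W_0(F_0)$ from above by the $\liminf$, using that the pair $(t_n,a_n\otimes b_n)$ can be rescaled so that the limit is again of the form $F_0\mapsto g(F_0,t_*,a_*,b_*)$.
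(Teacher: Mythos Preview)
Your outline is essentially the same strategy as the paper's: induction on $i$, with continuity and $p$-coercivity of $\R_iW_0$ carried forward simultaneously, upper semicontinuity from the infimum representation, and lower semicontinuity via near-minimizers, coercivity-based compactness, and a separate treatment of the endpoints $t\in\{0,1\}$. The paper packages the inductive step as an abstract result: if $h$ is continuous and coercive, then so is $\mathcal H(\xi):=\inf_{t,a\otimes b}\{(1-t)h(\xi-ta\otimes b)+th(\xi+(1-t)a\otimes b)\}$.

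Two points where the paper is more explicit than your sketch, and which you should not leave implicit. First, in the non-degenerate case $t_n\to t\in]0,1[$, you extract a limit of $a_n\otimes b_n$; you must check that this limit is again rank one, i.e.\ that $\RR^2\otimes\RR^3$ is closed in $\MM^{3\times 2}$ (the paper proves this as a separate lemma). Second, in the degenerate case, say $t_n\to 0$, your phrase ``rescaled so that the limit is again of the form $g(F_0,t_*,a_*,b_*)$'' is slightly off: no rescaling is needed. From $(1-t_n)F_n+t_nG_n=F_n^{(0)}\to F_0$ and $t_n|G_n|^p\leq C$ with $p>1$ one gets $t_nG_n\to 0$, hence $F_n\to F_0$; then $(1-t_n)\R_iW_0(F_n)\to \R_iW_0(F_0)=g(F_0,0,\cdot)$, and since the discarded term $t_n\R_iW_0(G_n)\geq 0$, one obtains $\liminf_n\R_{i+1}W_0(F_n^{(0)})\geq \R_iW_0(F_0)\geq \R_{i+1}W_0(F_0)$ directly.
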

\begin{lemma}\label{LeMmA-A5}
There exist $a\in\RR^2$, $b\in\RR^3$ and $t\in[0,1]$ such that
$$
\R_{i+1}W_0(\xi_j)=(1-t)\R_i W_0(\xi_j-t a\otimes b)+t\R_i W_0(\xi_j+(1-t)a\otimes b) $$
with $a\otimes b\in\RR^2\otimes\RR^3\subset\MM^{3\times 2}$ given by $(a\otimes b)x:=\langle a,x\rangle b$ for all $x\in\RR^2$, where $\langle\cdot,\cdot\rangle$ denotes the scalar product in  $\RR^2$. 
\end{lemma}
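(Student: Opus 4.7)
The goal is to show that the infimum defining $\R_{i+1}W_0(\xi_j)$ is attained. I would proceed in four stages, the fourth being the technical heart.

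First, by induction on $i$, I would establish that $\R_i W_0$ is $p$-coercive. The base case holds because $W_0(\xi)=\inf_\zeta W(\xi\mid\zeta)\geq C\inf_\zeta |(\xi\mid\zeta)|^p\geq C|\xi|^p$. For the inductive step, if $\R_i W_0\geq C|\cdot|^p$, the convexity of $|\cdot|^p$ applied to the rank-one splitting gives
$(1-t)|\xi-ta\otimes b|^p+t|\xi+(1-t)a\otimes b|^p\geq|\xi|^p$,
so $\R_{i+1}W_0(\xi)\geq C|\xi|^p$. Second, since $\psi$ is locally injective, $\xi_j$ has rank~$2$, hence $|\xi_{j,1}\wedge\xi_{j,2}|>0$; by (P1) this gives $W_0(\xi_j)<+\infty$, and therefore $L:=\R_{i+1}W_0(\xi_j)\leq\R_i W_0(\xi_j)\leq W_0(\xi_j)<+\infty$, so we have a legitimate minimizing sequence.

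Third, take a minimizing sequence $(a_n,b_n,t_n)$ and, up to subsequence, assume $t_n\to t^*\in[0,1]$. Write $F_n^-:=\xi_j-t_n\,a_n\otimes b_n$ and $F_n^+:=\xi_j+(1-t_n)\,a_n\otimes b_n$. In the main case $t^*\in(0,1)$, the coercivity from Stage~1 together with boundedness of $(1-t_n)\R_i W_0(F_n^-)+t_n\R_i W_0(F_n^+)\to L$ gives $|F_n^\pm|$ bounded (since $t_n,1-t_n$ are bounded away from~$0$). Hence $a_n\otimes b_n=F_n^+-F_n^-$ is bounded in $\MM^{3\times 2}$; since rank-$\leq 1$ matrices form a closed subset of $\MM^{3\times 2}$, a further subsequence of $a_n\otimes b_n$ converges to some $a\otimes b$ with $a\in\RR^2$, $b\in\RR^3$. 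Continuity of $\R_i W_0$ (Lemma~\ref{LeMmA-A4}) then yields the desired equality.

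Fourth, handle the degenerate cases $t^*\in\{0,1\}$; this is where $|a_n\otimes b_n|$ may blow up and is the main obstacle. Assume $t_n\to 0$ (the case $t_n\to 1$ is symmetric). Coercivity gives $|F_n^-|$ bounded, hence $s_n:=t_n|a_n\otimes b_n|=|F_n^--\xi_j|$ is bounded. Setting $r_n:=(1-t_n)|a_n\otimes b_n|$, coercivity and the triangle inequality also force $t_n r_n^p$ bounded, so $r_n\leq C't_n^{-1/p}$ and therefore
$s_n=\tfrac{t_n}{1-t_n}\,r_n\leq C''\,t_n^{(p-1)/p}\longrightarrow 0$
because $p>1$. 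Thus $F_n^-\to\xi_j$, and by continuity $(1-t_n)\R_i W_0(F_n^-)\to\R_i W_0(\xi_j)$; combined with the trivial bound $L\leq\R_i W_0(\xi_j)$ (take $t=0$ in the definition) and the nonnegativity of $t_n\R_i W_0(F_n^+)$, this gives $L=\R_i W_0(\xi_j)$, which is attained at the admissible triple $(a,b,t)=(0,0,0)$. The superlinearity $p>1$ used in this last step is the crucial ingredient preventing a genuine escape to infinity.
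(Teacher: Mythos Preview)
Your proposal is correct and follows essentially the same route as the paper: the paper packages the argument as an abstract lemma (its Lemma~\ref{KhonStrangFormulaProofContinuitetInfAtteint}) valid for any continuous $p$-coercive $h$, establishing inductively that each $\R_i W_0$ is continuous and coercive, then taking a minimizing sequence, splitting into the interior case $t\in(0,1)$ (bounded $F_n^\pm$, closedness of rank-one matrices, continuity) and the boundary cases $t\in\{0,1\}$ (where $p>1$ forces $t_nG_n\to 0$ and hence $\mathcal H(\xi)=h(\xi)$), exactly as you do. The only difference is organizational rather than mathematical.
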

Without loss of generality we can assume that $a=(1,0)$. For every $n\geq 1$ and for every $k\in\{0,\cdots,n-1\}$, consider $A^-_{k,n}$, $A^+_{k,n}$, $B_{k,n}$, $B^-_{k,n}$, $B^+_{k,n}$, $C_{k,n}$, $C^-_{k,n}$, $C^+_{k,n}\subset Y$ given by:
\begin{itemize}
\item[]$A^-_{k,n}:=\big\{(x_1,x_2)\in Y:{k\over n}\leq x_1\leq{k\over n}+{1-t\over n}\hbox{ and }{1\over n}\leq x_2\leq 1-{1\over n}\big\}$;
\item[]$A^+_{k,n}:=\big\{(x_1,x_2)\in Y:{k\over n}+{1-t\over n}\leq x_1\leq {k+1\over n}\hbox{ and }{1\over n}\leq x_2\leq 1-{1\over n}\big\}$;
\item[]$B_{k,n}:=\big\{(x_1,x_2)\in Y:{k\over n}\leq x_1\leq{k+1\over n}\hbox{ and }0\leq x_2\leq -x_1+{k+1\over n}\big\}$;
\item[]$B^-_{k,n}:=\big\{(x_1,x_2)\in Y:-x_2+{k+1\over n}\leq x_1\leq-tx_2+{k+1\over n}\hbox{ and }0\leq x_2\leq {1\over n}\big\}$;
\item[]$B^+_{k,n}:=\big\{(x_1,x_2)\in Y:-tx_2+{k+1\over n}\leq x_1\leq {k+1\over n}\hbox{ and }0\leq x_2\leq {1\over n}\big\}$;
\item[]$C_{k,n}:=\big\{(x_1,x_2)\in Y:{k\over n}\leq x_1\leq{k+1\over n}\hbox{ and }x_1+1-{k+1\over n}\leq x_2\leq 1\big\}$;
\item[]$C^-_{k,n}:=\big\{(x_1,x_2)\in Y:x_2-1+{k+1\over n}\leq x_1\leq t(x_2-1)+{k+1\over n}\hbox{ and }{n-1\over n}\leq x_2\leq 1\big\}$; 
\item[]$C^+_{k,n}:=\big\{(x_1,x_2)\in Y:t(x_2-1)+{k+1\over n}\leq x_1\leq {k+1\over n}\hbox{ and }{n-1\over n}\leq x_2\leq 1\big\}$,
\end{itemize}
and define  $\{\sigma_{n}\}_{n\geq 1}\subset\Aff_0^{\rm reg}(Y;\RR)$ by
$$
\sigma_{n}(x_1,x_2):=\left\{
\begin{array}{ll}
-t(x_1-{k\over n})&\hbox{if }(x_1,x_2)\in A^-_{k,n}\\
(1-t)(x_1-{k+1\over n})&\hbox{if }(x_1,x_2)\in A^+_{k,n}\cup B^+_{k,n}\cup C^+_{k,n}\\
-t(x_1+x_2-{k+1\over n})&\hbox{if }(x_1,x_2)\in B^-_{k,n}\\
-t(x_1-x_2+1-{k+1\over n})&\hbox{if }(x_1,x_2)\in C^-_{k,n}\\
0&\hbox{if }(x_1,x_2)\in B_{k,n}\cup C_{k,n}.
\end{array}
\right.
$$
Set
$$
b_{\ell}:=\left\{
\begin{array}{ll}
b&\hbox{if }b\not\in{\rm Im}\xi_j\\
b+{1\over\ell}\nu&\hbox{if }b\in{\rm Im}\xi_j
\end{array}
\right.
$$
(with ${\rm Im}\xi_j:=\{\xi_j\cdot x:x\in\RR^2\}\subset\RR^3$) where $\ell\geq 1$ and $\nu\in\RR^3$ is a normal vector to  ${\rm Im}\xi_j$. 
\begin{lemma}\label{LeMmA-A6}
Define $\{\theta_{n,\ell}\}_{n,\ell\geq 1}\subset\Aff_0^{\rm reg}(Y;\RR^3)$ by
$$
\theta_{n,\ell}(x):=\sigma_{n}(x)b_{\ell}.
$$
Then
\begin{equation}\label{EquBBLeMMa2}
\lim_{\ell\to+\infty}\lim_{n\to+\infty}\int_Y\R_iW_0(\xi_j+\nabla \theta_{n,\ell}(x))dx=\R_{i+1}W_0(\xi_j).
\end{equation}
\end{lemma}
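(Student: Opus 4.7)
The plan is to exploit the explicit laminate structure of $\theta_{n,\ell}$ and pass to the two limits cascadingly. First I would compute $\nabla\theta_{n,\ell}$ on each piece of the partition $\{A^\pm_{k,n}, B_{k,n}, B^\pm_{k,n}, C_{k,n}, C^\pm_{k,n}\}$ of $Y$ (bearing in mind that $a=(1,0)$). One checks that $\nabla\theta_{n,\ell}$ is constant on each region and takes only finitely many values, independent of $n$: namely $-t\,a\otimes b_\ell$ on $A^-_{k,n}$, $(1-t)\,a\otimes b_\ell$ on $A^+_{k,n}\cup B^+_{k,n}\cup C^+_{k,n}$, zero on $B_{k,n}\cup C_{k,n}$, and certain rank-one matrices of the form $-t\,b_\ell\otimes e$ with $e\in\{(1,1),(1,-1)\}$ on $B^-_{k,n}$ and $C^-_{k,n}$.

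Next I would tally the total measure of each family of regions, finding $|\bigcup_k A^-_{k,n}|\to 1-t$ and $|\bigcup_k A^+_{k,n}|\to t$ as $n\to+\infty$, with all remaining ``boundary'' regions having total measure of order $1/n$. Splitting the integral accordingly, the bulk contribution converges, as $n\to+\infty$, to
$$
(1-t)\R_iW_0(\xi_j - t\,a\otimes b_\ell) + t\R_iW_0(\xi_j + (1-t)\,a\otimes b_\ell),
$$
while the transitional contribution is bounded by $M_\ell/n$, where $M_\ell$ is the maximum of $\R_iW_0$ over the finitely many matrices arising there. Letting then $\ell\to+\infty$, we have $b_\ell\to b$, and continuity of $\R_iW_0$ (Lemma \ref{LeMmA-A4}) together with Lemma \ref{LeMmA-A5} identifies the limit as $\R_{i+1}W_0(\xi_j)$, which is precisely (\ref{EquBBLeMMa2}).

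The main obstacle is certifying that $M_\ell<+\infty$, i.e., that $\R_iW_0$ is finite at each of the finitely many matrices appearing in the transitional regions; this is exactly the purpose of the perturbation $b_\ell = b + \frac{1}{\ell}\nu$ with $\nu$ normal to ${\rm Im}\,\xi_j$. Writing $\xi_j = (v_1\mid v_2)$ with $v_1, v_2$ linearly independent, each such matrix has columns of the form $v_k + s_k b_\ell$ with $s_k\in\{-t,\,0,\,t,\,1-t\}$; the $\nu$-component of column $k$ equals $s_k/\ell$, and a direct check shows that whenever $(s_1, s_2)\neq (0,0)$ the two columns cannot be parallel (because $v_1 - v_2\in{\rm Im}\,\xi_j$ is nonzero), while $(s_1,s_2) = (0,0)$ just gives $\xi_j$, already of rank two. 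Thus every transitional matrix has rank two, so $W_0$ is finite at it by (\ref{P01}); then $\R_iW_0\leq W_0$ (which follows from iterating Lemma \ref{LeMmA-A3}) makes $\R_iW_0$ finite, Lemma \ref{LeMmA-A4} supplies its continuity, and the cascade of limits is justified.
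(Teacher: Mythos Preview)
Your argument is correct and follows essentially the same computation as the paper's own proof: identify the piecewise-constant values of $\xi_j+\nabla\theta_{n,\ell}$, observe that the $A^\pm$-regions carry total measure $(1-t)(1-\tfrac{2}{n})$ and $t(1-\tfrac{2}{n})$ while all remaining regions are $O(1/n)$, let $n\to+\infty$, and then use the continuity of $\R_iW_0$ (Lemma~\ref{LeMmA-A4}) and Lemma~\ref{LeMmA-A5} for the $\ell$-limit. You actually go further than the paper by spelling out why the transitional coefficients are finite---i.e., why the $b_\ell$ perturbation forces rank two and hence $W_0<+\infty$ via \eqref{P01}---a point the paper's proof uses but leaves implicit.
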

Consider $V^j_{q}\subset V_j$ given by $V^j_{q}:=\{x\in V_j:{\rm dist}(x,\partial V_j)>{1\over q}\}$ with $q\geq 1$ large enough. Then, there exists a finite family   $\{r_{m}+\rho_{m}Y\}_{m\in M}$ of disjoint subsets of $V^j_{q}$ with $r_{m}\in\RR^2$ and $\rho_{m}\in]0,1[$, such that
$
|V^j_{q}\setminus\cup_{m\in M}(r_{m}+\rho_{m}Y)|\leq{1\over q}.
$ 

Let $\{\phi_{n,\ell,q}\}_{n,\ell,q\geq 1}\subset\Aff_0^{\rm reg}(V_j;\RR^3)$ be given by
$$
\phi_{n,\ell,q}(x):=\left\{
\begin{array}{ll}
\displaystyle\rho_{m}\theta_{n,\ell}\left({x-r_{m}\over \rho_{m}}\right)&\hbox{si }x\in r_{m}+\rho_{m}Y\subset V^j_{q}\\
0&\hbox{si }x\in V_j\setminus V^j_{q}.
\end{array}
\right.
$$
\begin{lemma}\label{LeMmA-A7}
Define $\{\Phi^j_{n,\ell,q}\}_{n,l,q}\subset\Aff^{\rm reg}(V_j;\RR^3)$ by
\begin{equation}\label{BBFunct2}
\Phi^j_{n,\ell,q}(x):=\psi(x)+\phi_{n,\ell,q}(x).
\end{equation}
Then{\rm :}
\begin{itemize}
\item[(i)] $\hbox{for every }n,\ell,q\geq 1,\  \Phi^j_{n,\ell,q}\hbox{ is locally injective;}$
\item[(ii)] $\hbox{for every } \ell,q\geq 1,\ \Phi^j_{n,\ell,q}\to \psi\hbox{ in }L^{p}(V_j;\RR^3);$
\item[(iii)] $\lim_{q\to+\infty}\lim_{\ell\to+\infty}\lim_{n\to+\infty}\int_{V_j}\R_iW_0(\nabla \Phi^j_{n,\ell,q}(x))dx=|V_j|\R_{i+1}W_0(\xi_j)$.
\end{itemize}
\end{lemma}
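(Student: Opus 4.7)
The plan is to treat the three assertions in the order (ii) $\to$ (i) $\to$ (iii), because (ii) isolates a uniform bound that will clarify the subsequent parts. For (ii), a direct inspection of the piecewise definition of $\sigma_n$ shows that on each of the subregions $A^\pm_{k,n}, B_{k,n}, B^\pm_{k,n}, C_{k,n}, C^\pm_{k,n}$ the affine expression for $\sigma_n$ is bounded in absolute value by $C/n$, because each such region has diameter at most $C'/n$ and $\sigma_n$ vanishes on part of its boundary. Hence $\|\theta_{n,\ell}\|_{L^\infty(Y;\RR^3)}\leq C|b_\ell|/n$, and rescaling gives $\|\phi_{n,\ell,q}\|_{L^\infty(V_j;\RR^3)}\leq C|b_\ell|/n$, which tends to $0$ as $n\to+\infty$ with $\ell,q$ fixed. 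Since $V_j$ is bounded this yields $\Phi^j_{n,\ell,q}\to\psi$ in $L^p(V_j;\RR^3)$, which proves (ii).

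For (i), the key observation is that $\phi_{n,\ell,q}$ takes values in the one-dimensional subspace $\RR b_\ell$, since each of its non-zero pieces is of the form $\rho_m\sigma_n((\cdot-r_m)/\rho_m)b_\ell$. Thus there is a continuous scalar function $s:V_j\to\RR$ with $\phi_{n,\ell,q}(x)=s(x)b_\ell$, and $\Phi^j_{n,\ell,q}(x)=\psi(x)+s(x)b_\ell$. Suppose $\Phi^j_{n,\ell,q}(x_1)=\Phi^j_{n,\ell,q}(x_2)$. Projecting the identity $\psi(x_1)-\psi(x_2)=(s(x_2)-s(x_1))b_\ell$ onto the orthogonal complement of $\mathrm{Im}\,\xi_j$, and using that $\psi$ is $\xi_j$-affine on $V_j$ (so the left-hand side lies in $\mathrm{Im}\,\xi_j$) together with the fact that $b_\ell$ has non-zero component orthogonal to $\mathrm{Im}\,\xi_j$ by construction (this is precisely the role of the perturbation $b+\nu/\ell$ in the definition of $b_\ell$ when $b\in\mathrm{Im}\,\xi_j$), we obtain $s(x_1)=s(x_2)$ and hence $\psi(x_1)=\psi(x_2)$. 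Since $\xi_j:\RR^2\to\RR^3$ has rank $2$ it is injective, forcing $x_1=x_2$. Thus $\Phi^j_{n,\ell,q}$ is in fact globally injective on $V_j$, which implies (i).

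For (iii), exploiting that $\phi_{n,\ell,q}$ vanishes on $V_j\setminus V^j_q$ and outside $\bigcup_m(r_m+\rho_m Y)$, and that on $r_m+\rho_m Y$ the change of variable $y=(x-r_m)/\rho_m$ has Jacobian $\rho_m^2$, one obtains
\begin{align*}
\int_{V_j}\R_iW_0(\nabla\Phi^j_{n,\ell,q}(x))\,dx
&=\Big(\sum_{m\in M}\rho_m^2\Big)\int_Y\R_iW_0(\xi_j+\nabla\theta_{n,\ell}(y))\,dy\\
&\quad+\Big(|V_j|-\sum_{m\in M}\rho_m^2\Big)\R_iW_0(\xi_j).
\end{align*}
Letting $n\to+\infty$ and then $\ell\to+\infty$, Lemma \ref{LeMmA-A6} gives convergence of the first integral to $\R_{i+1}W_0(\xi_j)$. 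The discrepancy $|V_j|-\sum_m\rho_m^2$ is bounded by $|V_j\setminus V^j_q|+|V^j_q\setminus\cup_m(r_m+\rho_m Y)|\leq 2/q$. Letting finally $q\to+\infty$, the first term tends to $|V_j|\R_{i+1}W_0(\xi_j)$ and the second vanishes, yielding (iii).

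The main obstacle is (i): although identifying $\phi_{n,\ell,q}$ as scalar-times-$b_\ell$ makes the injectivity argument succinct, its correctness hinges critically on $b_\ell\notin\mathrm{Im}\,\xi_j$, which is precisely why the construction had to perturb $b$ by $\nu/\ell$ whenever $b$ already lies in $\mathrm{Im}\,\xi_j$. A careless choice of $b_\ell$ would collapse the $\nu$-component to zero and the injectivity argument would break down; the ordering of limits (first $n\to+\infty$, then $\ell\to+\infty$, finally $q\to+\infty$) is arranged precisely so that this perturbation may be removed \emph{after} the injectivity and the equality of (iii) are already in force.
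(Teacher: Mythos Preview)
Your argument is essentially correct and follows the same route as the paper, but two points deserve correction.

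In part (i) you claim global injectivity on $V_j$. Your projection argument relies on $\psi(x_1)-\psi(x_2)\in\mathrm{Im}\,\xi_j$, which follows from $\psi$ being affine with gradient $\xi_j$; but $\nabla\psi=\xi_j$ on $V_j$ only forces $\psi$ to be affine on each \emph{connected component} of $V_j$, with possibly different additive constants on different components. So your argument, as written, proves injectivity on each connected component, hence local injectivity --- exactly what (i) asserts --- but not global injectivity. The paper handles this carefully by restricting from the outset to the connected component $W\subset V_j$ containing a given point. Apart from this, your observation that $\phi_{n,\ell,q}(x)=s(x)b_\ell$ for a scalar $s$ is a clean way to collapse the paper's three-case analysis into a single projection step; the underlying mechanism (rank$(\xi_j)=2$ together with $b_\ell\notin\mathrm{Im}\,\xi_j$) is identical.

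In part (iii) the explicit bound $|V_j|-\sum_m\rho_m^2\leq 2/q$ is not justified: you have $|V^j_q\setminus\cup_m(r_m+\rho_mY)|\leq 1/q$ by construction, but $|V_j\setminus V^j_q|$ is the measure of a $1/q$-neighbourhood of $\partial V_j$ inside $V_j$, which need not be bounded by $1/q$. What is true (and what the paper uses) is simply that $|V_j\setminus V^j_q|\to 0$ as $q\to+\infty$ because $|\partial V_j|=0$; this suffices for the limit. Part (ii) matches the paper.
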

We can now prove (\ref{D1bis}). According to Lemma \ref{LeMmA-A3}, it is sufficient to show that 
$$
\overline{\mathcal{I}}_{\Aff_{\rm li}^{\rm reg}}(\psi)\leq \int_\Sigma\R_iW_0(\nabla \psi(x))dx\hbox{ for all }\psi\in\Aff_{\rm li}^{\rm reg}(\Sigma;\RR^3)\leqno (P_i)
$$
for all $i\geq 0$. The proof is by induction on $i$. As $R_0 W_0=W_0$ it is clear that $(P_0)$ is true.  Assume that $(P_i)$ is true, and prove that $(P_{i+1})$ is true. Let $\psi\in\Aff_{\rm li}^{\rm reg}(\Sigma;\RR^3)$. Then, there exists a finite family $\{V_j\}_{j\in J}$ of open disjoint subsets of $\Sigma$ such that $|\Sigma\setminus\cup_{j\in J}V_j|=0$ for all $j\in J$ and for every  $j\in J$, $|\partial V_j|=0$ and $\nabla \psi(x)=\xi_j$ in $V_j$ with $\xi_j\in\MM^{3\times 2}$.  Define $\{\Psi_{n,\ell,q}\}_{n,\ell,q\geq 1}\subset\Aff^{\rm reg}(\Sigma;\RR^3)$ by
$$
\Psi_{n,\ell,q}(x):=\Phi^j_{n,\ell,q}(x)\hbox{ if }x\in V_j
$$
with $\Phi^j_{n,\ell,q}$ given by (\ref{BBFunct2}). Taking Lemma \ref{LeMmA-A7}(i) into account (and recalling that rappelant $\psi$ is locally injective) it is easy to see that $\Psi_{n,\ell,q}$ is locally injective.  Using $(P_i)$ we can assert that 
$$
\overline{\mathcal{I}}_{\Aff_{\rm li}^{\rm reg}}(\Psi_{n,\ell,q})\leq\int_{\Sigma}\R_iW_0(\nabla\Psi_{n,\ell,q}(x))dx
$$
for all $n,\ell,q\geq 1$. By Lemma \ref{LeMmA-A7}(ii) it is clear that for every $\ell,q\geq 1$, $\Psi_{n,l,q}\to \psi$ in $L^{p}(\Sigma;\RR^3)$. It follows that 
$$
\overline{\mathcal{I}}_{\Aff_{\rm li}^{\rm reg}}(\psi)\leq \lim_{n\to+\infty}\overline{\mathcal{I}}_{\Aff_{\rm li}^{\rm reg}}(\Psi_{n,\ell,q})\leq\lim_{n\to+\infty}\int_{\Sigma}\R_iW_0(\nabla\Psi_{n,\ell,q}(x))dx
$$
for all $\ell,q\geq 1$. Moreover, from Lemma \ref{LeMmA-A7}(iii) we see that
$$
\lim_{q\to+\infty}\lim_{\ell\to+\infty}\lim_{n\to+\infty}\int_{\Sigma}\R_iW_0(\nabla\Psi_{n,\ell,q}(x))dx=\int_\Sigma\R_{i+1}W_0(\nabla \psi(x))dx.
$$
Hence 
$$
\overline{\mathcal{I}}_{\Aff_{\rm li}^{\rm reg}}(\psi)\leq \int_\Sigma\R_{i+1}W_0(\nabla \psi(x))dx,
$$
and ($P_{i+1}$) follows. This completes the proof of the assertion (\ref{D1bis}).
\end{proof}

\medskip

In what follows, we give the proof of Lemmas \ref{LeMmA-A6}, \ref{LeMmA-A7}, \ref{LeMmA-A4} and \ref{LeMmA-A5}.


\begin{proof}[\bf Proof of Lemma \ref{LeMmA-A6}]
Recalling that $a=(1,0)$ we see that
$$
\xi_j+\nabla\theta_{n,\ell}(x):=\left\{
\begin{array}{ll}
\xi_j-ta\otimes b_\ell&\hbox{if }x\in {\rm int}(A^-_{k,n})\\
\xi_j+(1-t)a\otimes b_\ell&\hbox{if }x\in {\rm int}(A^+_{k,n}\cup B^+_{k,n}\cup C^+_{k,n})\\
\xi_j-t(a+a^\perp)\otimes b_\ell&\hbox{if }x\in {\rm int}(B^-_{k,n})\\
\xi_j-t(a-a^\perp)\otimes b_\ell&\hbox{if }x\in {\rm int}(C^-_{k,n})\\
\xi_j&\hbox{if }x\in {\rm int}(B_{k,n})\cup{\rm int}(C_{k,n})
\end{array}
\right.
$$
with $a^\perp=(0,1)$ (and ${\rm int}(E)$ denotes the interior of the set $E$). Moreover, we have:
\begin{itemize}
\item[]${\displaystyle\int_{\cup_{k=0}^{n-1} A^-_{k,n}}\R_iW_0(\xi_j-ta\otimes b_\ell)dx}=(1-t)(1-{2\over n})\R_iW_0(\xi_j-ta\otimes b_\ell)$;
\item[]${\displaystyle\int_{\cup_{k=0}^{n-1} A^+_{k,n}}\R_i W_0(\xi_j+(1-t)a\otimes b_\ell)dx}=t(1-{2\over n})\R_i W_0(\xi_j+(1-t)a\otimes b_\ell)$;
\item[]${\displaystyle\int_{\cup_{k=0}^{n-1} (B^+_{k,n}\cup C^+_{k,n})}\R_i W_0(\xi_j+(1-t)a\otimes b_\ell)dx}={t\over n}\R_i W_0(\xi_j+(1-t)a\otimes b_\ell)$;
\item[]${\displaystyle\int_{\cup_{k=0}^{n-1} B^-_{k,n}}\R_i W_0(\xi_j-t(a+a^\perp)\otimes b_\ell)dx}={1-t\over 2n}\R_i W_0(\xi_j-t(a+a^\perp)\otimes b_\ell)$;
\item[]${\displaystyle\int_{\cup_{k=0}^{n-1} C^-_{k,n}}\R_i W_0(\xi_j-t(a-a^\perp)\otimes b_\ell)dx}={1-t\over 2n}\R_i W_0(\xi_j-t(a-a^\perp)\otimes b_\ell)$;
\item[]${\displaystyle\int_{\cup_{k=0}^{n-1} (B_{k,n}\cup C_{k,n})}\R_i W_0(\xi_j)dx}={1\over n}\R_i W_0(\xi_j)$.
\end{itemize}
Hence 
\begin{eqnarray*}
\int_Y\R_i W_0(\xi_j+\nabla\theta_{n,\ell}(x))dx&=&\left(1-{2\over n}\right)\Big[(1-t)\R_iW_0(\xi_j-ta\otimes b_\ell)+t\R_i W_0(\xi_j\\
&&+\ (1-t)a\otimes b_\ell)\Big]+{1\over n}\Big[t\R_i W_0(\xi_j+(1-t)a\otimes b_\ell)\\
&&+\ {1-t\over 2}\big(\R_i W_0(\xi_j-t(a+a^\perp)\otimes b_\ell)+\R_i W_0(\xi_j-\\
&&t(a-a^\perp)\otimes b_\ell)\big)+\R_i W_0(\xi_j)\Big]
\end{eqnarray*}
for all $n,\ell\geq 1$. It follows that
\begin{eqnarray*}
\lim_{n\to+\infty}\int_Y\R_i W_0(\xi_j+\nabla\theta_{n,\ell}(x))dx&=&(1-t)\R_iW_0(\xi_j-ta\otimes b_\ell)\\
&&+\ t\R_i W_0(\xi_j+(1-t)a\otimes b_\ell)
\end{eqnarray*}
for all $\ell\geq 1$. Taking Lemma \ref{LeMmA-A4} into account and noticing that  $b_\ell\to b$, we deduce that 
\begin{eqnarray*}
\lim_{\ell\to+\infty}\lim_{n\to+\infty}\int_Y\R_i W_0(\xi_j+\nabla\theta_{n,\ell}(x))dx&=&(1-t)\R_iW_0(\xi_j-ta\otimes b)\\
&&+\ t\R_i W_0(\xi_j+(1-t)a\otimes b),
\end{eqnarray*}
and (\ref{EquBBLeMMa2}) follows by using Lemma \ref{LeMmA-A5}.
\end{proof}

\begin{proof}[\bf Proof of Lemma \ref{LeMmA-A7}]
(i) Let $x\in V_j$ and let $W\subset V_j$ be the connected component of $V_j$ such that $x\in W$ (As $V_j$ is open, so is $W$). Since $\nabla \psi=\xi_j$ in $W$, there exists $c\in\RR^3$ such that $\psi(x^\prime)=\xi_j\cdot x^\prime+c$ for all $x^\prime\in W$.   We claim that ${\Phi^j_{n,\ell,q}}{\lfloor_{W}}$ is injective. Indeed, let $x^\prime\in W$ be such that $\Phi^j_{n,\ell,q}(x)=\Phi^j_{n,\ell,q}(x^\prime)$. Then, one of the three possibilities holds: 
\begin{eqnarray}
&&\left\{\begin{array}{l}\Phi^j_{n,\ell,q}(x)=\xi_j\cdot x+c+\rho_m\sigma_{n}\left({x-r_m\over\rho_m}\right)b_\ell\\
\Phi^j_{n,\ell,q}(x^\prime)=\xi_j\cdot x^\prime+c+\rho_{m^\prime}\sigma_{n}\left({x^\prime-r_{m^\prime}\over\rho_{m^\prime}}\right)b_\ell;\end{array}\right.\label{EQUaT(4)BBLemmA1}\\
&&\left\{\begin{array}{l}\Phi^j_{n,\ell,q}(x)=\xi_j\cdot x+c+\rho_m\sigma_{n}\left({x-r_m\over\rho_m}\right)b_\ell\\
\Phi^j_{n,\ell,q}(x^\prime)=\xi_j\cdot x^\prime+c;
\end{array}\right.\label{EQUaT(5)BBLemmA1}\\
&&\left\{\begin{array}{l}\Phi^j_{n,\ell,q}(x)=\xi_j\cdot x+c\\
\Phi^j_{n,\ell,q}(x^\prime)=\xi_j\cdot x^\prime+c.
\end{array}\right.\label{EQUaT(6)BBLemmA1}
\end{eqnarray}
Setting $\alpha:=\rho_m\sigma_n({x-r_m\over\rho_m})-\rho_{m^\prime}\sigma_n({x^\prime-r_{m^\prime}\over \rho_{m^\prime}})$ and $\beta:=\rho_m\sigma_n({x-r_m\over\rho_m})$ we have: 
\begin{itemize}
\item[\SMALL$\blacklozenge$]$\left\{\begin{array}{ll}
\xi_j(x^\prime-x)=0&\hbox{if }\alpha=0\\
b_\ell={1\over \alpha}\xi_j(x^\prime-x)&\hbox{if }\alpha\not=0
\end{array}
\right.
\hbox{ when (\ref{EQUaT(4)BBLemmA1}) is satisfied;}$
\item[\SMALL$\blacklozenge$]$\left\{
\begin{array}{ll}
\xi_j(x^\prime-x)=0&\hbox{if }\beta=0\\
b_\ell={1\over \beta}\xi_j(x^\prime-x)&\hbox{if }\beta\not=0
\end{array}
\right.
\hbox{ when (\ref{EQUaT(5)BBLemmA1}) is satisfied;}$
\item[\SMALL$\blacklozenge$]\ $\xi_j(x^\prime-x)=0\hbox{ when (\ref{EQUaT(6)BBLemmA1}) is satisfied.}$
\end{itemize}
It follows that if $x\not=x^\prime$ then either ${\rm rank}(\xi_j)<2$ or $b_\ell\in{\rm Im}\xi_j$ which is impossible. Hence $x=x^\prime$. 

(ii) Given $\ell,q\geq 1$, we have $\|\phi_{n,\ell,q}\|_{L^\infty(V_j;\RR^3)}\leq \|\theta_{n,\ell}\|_{L^\infty(Y;\RR^3)}=|b_l|\|\sigma_n\|_{L^\infty(Y;\RR)}$.  On the other hand, for every $k\in\{0,\cdots,n-1\}$, it is clear that $|\sigma_n(x)|\leq {t(1-t)\over n}$ for all $x\in]{k\over n},{k+1\over n}[\times]0,1[$, and so $\sigma_n\to 0$ in $L^\infty(Y;\RR)$. Hence $\phi_{n,\ell,q}\to 0$ in $L^\infty(V_j;\RR^3)$, and (ii) follows.

(iii) Recalling that $\phi_{n,\ell,q}=0$ in $V_j\setminus\hat{V}^j_q$ and $\sum_{m\in M}\rho_m^2=|\hat{V}^j_q|$ we see that 
\begin{eqnarray*}
\int_{V_j}\R_iW_0(\nabla \Phi^j_{n,\ell,q}(x))dx\hskip-2mm&=&\hskip-2mm\int_{V_j}\R_iW_0(\xi_j+\nabla \phi_{n,\ell,q}(x))dx\\
&=&\hskip-2mm\int_{\hat{V}^j_q}\R_iW_0(\xi_j+\nabla \phi_{n,\ell,q}(x))dx+|V_j\setminus \hat{V}^j_q|\R_iW_0(\xi_j)\\
&=&\hskip-2mm|\hat{V}^j_q|\int_Y\R_iW_0(\xi_j+\nabla\theta_{n,\ell}(x))dx+|V_j\setminus \hat{V}^j_q|\R_iW_0(\xi_j).
\end{eqnarray*}
Using Lemma \ref{LeMmA-A6} we deduce that 
$$
\lim_{\ell\to+\infty}\lim_{n\to+\infty}\int_{V_j}\R_iW_0(\nabla \Phi^j_{n,\ell,q}(x))dx=|\hat{V}^j_q|\R_{i+1}W_0(\xi_j)+|V_j\setminus \hat{V}^j_q|\R_iW_0(\xi_j)
$$
for all $q\geq 1$, and (iii) follows since  $|\hat{V}^j_q|=|V^j_q|-|V^j_q\setminus \hat{V}^j_q|\to|V_j|$ (because $|V^j_q|\to|V_j|$ and ${1\over q}\geq|V^j_q\setminus \hat{V}^j_q|\to 0$) and $|V_j\setminus \hat{V}^j_q|=|V_j\setminus V^j_q|+|V^j_q\setminus\hat{V}^j_q|\to 0$ (because $|V_j\setminus V^j_q|\to 0$). 
\end{proof}

\begin{proof}[\bf Proof of Lemmas \ref{LeMmA-A4} and \ref{LeMmA-A5}]
We begin by proving three lemmas.
\begin{lemma}\label{ProduitTensorielEstFermDanslesMatrices}
$\RR^2\otimes\RR^3$ is closed in $\MM^{3\times 2}$. 
\end{lemma}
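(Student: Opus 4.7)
The plan is to observe that $\RR^2\otimes\RR^3$, as a subset of $\MM^{3\times 2}$, is precisely the set of matrices of rank at most one, and then argue closedness directly by a normalization-and-compactness argument (rather than invoking the polynomial characterization via vanishing of $2\times 2$ minors, although that gives an alternative one-line proof).

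Concretely, I would take an arbitrary sequence $(a_n\otimes b_n)_{n\geq 1}\subset\RR^2\otimes\RR^3$ converging to some $M\in\MM^{3\times 2}$, and show that $M\in\RR^2\otimes\RR^3$. The first step is the trivial case: if $M=0$, then $M=0\otimes 0\in\RR^2\otimes\RR^3$ and we are done. Otherwise $M\neq 0$, so $|a_n\otimes b_n|=|a_n||b_n|\to|M|>0$, which forces $a_n\neq 0$ and $b_n\neq 0$ for all sufficiently large $n$.

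The second step is the renormalization: using the obvious identity $a\otimes b=(\lambda a)\otimes(\lambda^{-1}b)$ for $\lambda\neq 0$, I replace $a_n$ by $a_n/|a_n|$ and $b_n$ by $|a_n|b_n$, so that without loss of generality $|a_n|=1$ for $n$ large. Then $|b_n|=|a_n||b_n|=|a_n\otimes b_n|\to|M|$, so $(b_n)$ is bounded in $\RR^3$. By compactness of the unit sphere of $\RR^2$ and boundedness of $(b_n)$, I extract a subsequence along which $a_n\to a$ with $|a|=1$ and $b_n\to b$ with $|b|=|M|$. Continuity of the tensor product gives $a_n\otimes b_n\to a\otimes b$, and by uniqueness of limits $M=a\otimes b\in\RR^2\otimes\RR^3$.

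There is really no main obstacle here: the only subtle point is to rule out the degenerate scenario where $|a_n|\to 0$ or $|a_n|\to+\infty$ (which would prevent subsequential convergence of both factors), and this is exactly what the rescaling in the second step takes care of, using the hypothesis $M\neq 0$ to ensure $|a_n||b_n|$ stays bounded away from both $0$ and $+\infty$.
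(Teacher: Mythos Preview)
Your proof is correct and follows essentially the same strategy as the paper: normalize the first factor to lie on the unit sphere $\SS^1$, use compactness to extract a convergent subsequence $a_n\to a$, and then recover convergence of the second factor. The only cosmetic difference is that you deduce boundedness of $b_n$ from the norm identity $|a_n\otimes b_n|=|a_n|\,|b_n|$ and then pass to a further subsequence, whereas the paper recovers $v_n$ directly via the formula $v_n=\langle u_n,u_0\rangle^{-1}(u_n\otimes v_n)u_0$ for a suitable $u_0$; your explicit treatment of the case $M=0$ is a small improvement in rigor.
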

\begin{proof}[\bf Proof of Lemma \ref{ProduitTensorielEstFermDanslesMatrices}]
Let $\{a_n\otimes b_n\}_{n\geq 1}\subset\RR^2\otimes\RR^3$ and let  $\xi\in\MM^{3\times 2}$ be such that $a_n\otimes b_n\to \xi$. For every $n\geq 1$, $a_n\otimes b_n=u_n\otimes v_n$ with $u_n={a_n\over|a_n|}\in\SS^1$ and $v_n=|a_n| b_n$, where $\SS^1$ is the unit sphere in $\RR^2$. As $\SS^1$ is compact, there exists $u\in\SS^1$ such that (up to a subsequence) $u_n\to u$.  Let $u_0\in\RR^2$ be such that $\langle u,u_0\rangle\not=0$. Then,  $\langle u_n, u_0\rangle\not=0$ for all $n\geq n_0$ with $n_0\geq 1$ large enough. For every $n\geq n_0$, $v_n={1\over \langle u_n,u_0\rangle}(u_n\otimes v_n)u_0$, and so $v_n\to{1\over \langle u,u_0\rangle}\xi u_0=:v\in\RR^3$. It follows that $a_n\otimes b_n\to u\otimes v$. Hence $\xi=u\otimes v$.
\end{proof}

\medskip

Let $\mathcal{H}:\MM^{3\times 2}\to[0,+\infty]$ be defined by 
$$
\mathcal{H}(\xi):=\inf\Big\{H(\xi,t,a\otimes b):(t,a\otimes b)\in[0,1]\times\RR^2\otimes\RR^3\Big\},
$$
where $H:\MM^{3\times 2}\times[0,1]\times\RR^2\otimes\RR^3\to[0,+\infty]$ is given by 
$$
H(\xi,t,a\otimes b):=(1-t)h(\xi-ta\otimes b)+th(\xi+(1-t)a\otimes b)
$$
with $h:\MM^{3\times 2}\to[0,+\infty]$ continuous and coercive. 
\begin{lemma}\label{KhonStrangFormulaProofContinuitetInfAtteint}
Given $\xi\in\MM^{3\times 2}$, if $\mathcal{H}(\xi)<+\infty$ then there exists $(t,a\otimes b)\in[0,1]\times\RR^2\otimes\RR^3$ such that $\mathcal{H}(\xi)=H(\xi,t,a\otimes b)$.
\end{lemma}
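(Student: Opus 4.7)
The plan is to apply the direct method of the calculus of variations to a minimizing sequence, handling interior and boundary limits of the parameter $t$ separately.

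Let $\{(t_n, a_n\otimes b_n)\}_{n\geq 1}\subset [0,1]\times(\RR^2\otimes\RR^3)$ be a minimizing sequence, i.e.\ $H(\xi,t_n,a_n\otimes b_n)\to\mathcal{H}(\xi)<+\infty$. Introduce the reparametrization $u_n:=\xi-t_n a_n\otimes b_n$ and $v_n:=\xi+(1-t_n)a_n\otimes b_n$; then $\xi=(1-t_n)u_n+t_n v_n$, $v_n-u_n=a_n\otimes b_n\in\RR^2\otimes\RR^3$, and
\[
H(\xi,t_n,a_n\otimes b_n)=(1-t_n)h(u_n)+t_n h(v_n).
\]
In particular both nonnegative summands are bounded by some $C>0$ for $n$ large. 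Extracting a subsequence, I may assume $t_n\to t\in[0,1]$.

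If $t\in(0,1)$, then $1-t_n$ and $t_n$ are eventually bounded below by a positive constant, hence $\{h(u_n)\}$ and $\{h(v_n)\}$ are bounded, and the coercivity of $h$ forces $\{u_n\}$ and $\{v_n\}$ to be bounded in $\MM^{3\times 2}$. Extracting once more, $u_n\to u$ and $v_n\to v$, so $a_n\otimes b_n=v_n-u_n\to v-u$ in $\MM^{3\times 2}$. By Lemma \ref{ProduitTensorielEstFermDanslesMatrices}, $v-u$ lies in $\RR^2\otimes\RR^3$, say $v-u=a\otimes b$. Continuity of $h$ then gives $H(\xi,t,a\otimes b)=(1-t)h(u)+t h(v)=\mathcal{H}(\xi)$, and the infimum is attained.

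The main obstacle is the boundary case $t\in\{0,1\}$, say $t=0$ by symmetry, where the sequence $\{v_n\}$ may escape to infinity. Here $(1-t_n)h(u_n)\leq C$ with $1-t_n\to 1$ still forces $\{h(u_n)\}$ bounded, hence $\{u_n\}$ bounded by coercivity, and (along a subsequence) $u_n\to u$. From $\xi=(1-t_n)u_n+t_n v_n$ we deduce $t_n v_n=\xi-(1-t_n)u_n\to\xi-u$. Using the $p$-coercivity of $h$ with $p>1$ (i.e.\ $h(F)\geq c|F|^p$), the bound $t_n h(v_n)\leq C$ yields $t_n|v_n|^p\leq C/c$, whence $|t_n v_n|^p=t_n^{p-1}\bigl(t_n|v_n|^p\bigr)\to 0$ because $t_n\to 0$ and $p-1>0$. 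Therefore $\xi-u=0$, so $u=\xi$, and $(1-t_n)h(u_n)\to h(\xi)$ by continuity. Since $t_n h(v_n)\geq 0$, this gives $\mathcal{H}(\xi)\geq h(\xi)$, and combined with the trivial bound $\mathcal{H}(\xi)\leq H(\xi,0,0)=h(\xi)$ we get $\mathcal{H}(\xi)=h(\xi)$, with the pair $(t^*,a^*\otimes b^*)=(0,0)$ realizing the infimum.

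The hardest point is this boundary analysis: one must exploit $p$-coercivity quantitatively, with $p>1$, to rule out the pathological behaviour $|v_n|\to\infty$ at a rate compatible with $t_nh(v_n)$ bounded yet incompatible with $t_nv_n\to 0$; the closedness of $\RR^2\otimes\RR^3$ in $\MM^{3\times 2}$ (Lemma \ref{ProduitTensorielEstFermDanslesMatrices}) plays its role in the interior case, ensuring that the limit of rank-one matrices is again rank-one.
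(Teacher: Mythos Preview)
Your proof is correct and follows essentially the same approach as the paper's: a minimizing sequence is reparametrized by the pair $(u_n,v_n)$ (the paper's $(F_n,G_n)$), the interior case $t\in(0,1)$ is handled via coercivity, compactness, and the closedness of $\RR^2\otimes\RR^3$ (Lemma~\ref{ProduitTensorielEstFermDanslesMatrices}), and the boundary case $t\in\{0,1\}$ uses the quantitative $p$-coercivity with $p>1$ to force $t_nv_n\to 0$ and hence $u=\xi$. Your write-up is slightly more explicit about the estimate $|t_nv_n|^p=t_n^{p-1}(t_n|v_n|^p)\to 0$, but the argument is the same.
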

\begin{proof}[\bf Proof of Lemma \ref{KhonStrangFormulaProofContinuitetInfAtteint}]
Let $\{(t_n,a_n\otimes b_n)\}_{n\geq 1}\subset[0,1]\times\RR^2\otimes\RR^3$ be a minimizing sequence for $\mathcal{H}(\xi)$ such that $t_n\to t\in[0,1]$. Set $F_n:=\xi-t_na_n\otimes b_n$ et $G_n:=\xi+(1-t_n)a_n\otimes b_n$. Then $(1-t_n)F_n+t_nG_n=\xi$ et $G_n-F_n=a_n\otimes b_n$ for all $n\geq 1$. By the coercivity of  $h$ we have
\begin{eqnarray}\label{KhonStrangFormulaProofContinuitetInfAtteintCond1}
(1-t_n)|F_n|^p+t_n|G_n|^p\leq c\hbox{ for all }n\geq 1\hbox{ and some } c>0.
\end{eqnarray}
One of the two possibilities holds:
\begin{itemize}
\item[\SMALL$\blacklozenge$] $t\in]0,1[$;
\item[\SMALL$\blacklozenge$] either $t=0$ or $t=1$.
\end{itemize}
{\em Case where $t\in]0,1[$.} It is clear that $1-t_n\geq\alpha_1>0$ et $t_n\geq\alpha_2>0$ for all $n\geq 1$. Using (\ref{KhonStrangFormulaProofContinuitetInfAtteintCond1}) we deduce that there exists   $F,G\in\MM^{3\times 2}$ such that (up to a subsequence) $F_n\to F$ and $G_n\to G$.  Consequently,  $G_n-F_n=a_n\otimes b_n\to G-F$. But, from Lemma \ref{ProduitTensorielEstFermDanslesMatrices}, $\RR^2\otimes\RR^3$ is closed in $\MM^{3\times 2}$, and so $G-F\in\RR^2\otimes\RR^3$, i.e., $G-F=a\otimes b$ with $a\in\RR^2$ et $b\in\RR^3$. As $H(\xi,\cdot,\cdot)$ is continuous, it follows that 
$$
\mathcal{H}(\xi)=\lim_{n\to+\infty}H(\xi,t_n,a_n\otimes b_n)=H(\xi,t,a\otimes b).
$$

\smallskip

{\em Case where either $t=0$ or $t=1$.} Assume that $t=0$ (the case $t=1$ can be treated in the same way). Then $1-t_n\geq\alpha>0$ for all $n\geq 1$. As $p>1$ and  $t_n\to 0$, using (\ref{KhonStrangFormulaProofContinuitetInfAtteintCond1}) we deduce that there exists $F\in\MM^{3\times 2}$ such that $F_n\to F$ and $t_n G_n\to 0$. As $(1-t_n)F_n+t_nG_n=\xi$ for all $n\geq 1$, it follows that $F=\xi$. Hence
$$
\lim_{n\to+\infty}(1-t_n)h(F_n)=h(\xi)
$$ 
since  $h$ is continuous. But $t_n h(G_n)=H(\xi,t_n,a_n\otimes b_n)-(1-t_n)h(F_n)$ for all $n\geq 1$ and $\mathcal{H}(\xi)\leq h(\xi)$, hence 
$$
\lim_{n\to+\infty}t_n h(G_n)=\mathcal{H}(\xi)-h(\xi)\leq 0.
$$ 
On the other hand, using the coercivity of $h$, we see that  $t_nh(G_n)\geq Ct_n|G_n|^p$ for all $n\geq 1$ and some $C>0$. Then 
$$
\lim_{n\to+\infty}t_nh(G_n)\geq C\lim_{n\to+\infty}t_n|G_n|^p=0,
$$ 
and consequently
$$
\lim_{n\to+\infty}t_nh(G_n)=0.
$$ 
Thus $\mathcal{H}(\xi)=h(\xi)=H(\xi,0,a\otimes b)$, where  $a\otimes b$ is any element of $\RR^2\otimes\RR^3$. 
\end{proof}

\begin{lemma}\label{KhonStrangFormulaProofContinuitetInfAtteint2}
$\mathcal{H}$ is continuous and coercive. 
\end{lemma}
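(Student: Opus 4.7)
The plan is to treat coercivity and continuity separately: coercivity follows from a one-line convexity computation, while continuity splits into upper and lower semicontinuity, both of which lean on the existence of minimizers furnished by Lemma~\ref{KhonStrangFormulaProofContinuitetInfAtteint} together with the closedness of rank-one tensors from Lemma~\ref{ProduitTensorielEstFermDanslesMatrices}. Since $(1-t)(\xi - ta\otimes b) + t(\xi + (1-t)a\otimes b) = \xi$, convexity of $F \mapsto |F|^p$ combined with $h \geq C|\cdot|^p$ gives $H(\xi, t, a\otimes b) \geq C|\xi|^p$ uniformly in $(t, a\otimes b)$, hence $\mathcal{H}(\xi) \geq C|\xi|^p$; this settles \emph{coercivity}.

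For upper semicontinuity at $\xi$ with $\mathcal{H}(\xi) < +\infty$, I would apply Lemma~\ref{KhonStrangFormulaProofContinuitetInfAtteint} to select $(t, a\otimes b)$ realizing $\mathcal{H}(\xi)$ and use it as a \emph{fixed} test pair in $\mathcal{H}(\xi_n) \leq H(\xi_n, t, a\otimes b)$; continuity of $h$ passes the right-hand side to $H(\xi, t, a\otimes b) = \mathcal{H}(\xi)$. For lower semicontinuity, given $\xi_n \to \xi$ with $\liminf \mathcal{H}(\xi_n) =: L < +\infty$, I would extract a subsequence along which $\mathcal{H}(\xi_n) \to L$, choose minimizers $(t_n, a_n\otimes b_n)$ via Lemma~\ref{KhonStrangFormulaProofContinuitetInfAtteint}, and extract $t_n \to t \in [0,1]$. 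Setting $F_n := \xi_n - t_n a_n\otimes b_n$ and $G_n := \xi_n + (1-t_n) a_n\otimes b_n$, coercivity of $h$ provides a bound $(1-t_n)|F_n|^p + t_n|G_n|^p \leq \mathrm{const}$. When $t \in (0,1)$ both $F_n$ and $G_n$ stay in compact sets, so up to a subsequence $F_n \to F$ and $G_n \to G$; Lemma~\ref{ProduitTensorielEstFermDanslesMatrices} forces $G - F = a\otimes b \in \RR^2\otimes\RR^3$, and continuity of $h$ transports $H(\xi_n, t_n, a_n\otimes b_n)$ to $H(\xi, t, a\otimes b) \geq \mathcal{H}(\xi)$, giving $L \geq \mathcal{H}(\xi)$.

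The degenerate case $t \in \{0,1\}$---say $t = 0$---is where the main obstacle lies: $G_n$ is no longer guaranteed to remain bounded, so the interior argument collapses and one cannot extract a limiting rank-one $a\otimes b$. The plan here is to exploit $p > 1$: writing $|t_n G_n|^p = t_n^{p-1}(t_n|G_n|^p)$ yields $t_n G_n \to 0$, so the identity $(1-t_n)F_n + t_n G_n = \xi_n$ forces $F_n \to \xi$. Then the crucial move is to discard the \emph{nonnegative} term $t_n h(G_n)$, keeping only $H(\xi_n, t_n, a_n\otimes b_n) \geq (1-t_n) h(F_n) \to h(\xi) \geq \mathcal{H}(\xi)$, whence $L \geq \mathcal{H}(\xi)$. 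Non-negativity of $h$ is essential here: without it, the potentially unbounded contribution $t_n h(G_n)$ could have arbitrary sign in the limit and destroy the lower bound.
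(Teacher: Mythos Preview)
Your proof is correct and follows the same strategy as the paper: coercivity via convexity of $|\cdot|^p$, upper semicontinuity from continuity of each $H(\cdot,t,a\otimes b)$, and lower semicontinuity via minimizers from Lemma~\ref{KhonStrangFormulaProofContinuitetInfAtteint}, the closedness of $\RR^2\otimes\RR^3$, and a case split on whether the limiting $t$ lies in $(0,1)$ or in $\{0,1\}$. Your endpoint argument is in fact a touch simpler---you discard the nonnegative term $t_n h(G_n)$ outright, whereas the paper works to show it actually tends to zero---but the structure is otherwise identical.
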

\begin{proof}[\bf Proof of Lemma \ref{KhonStrangFormulaProofContinuitetInfAtteint2}]
We first prove that $\mathcal{H}$ is continuous. Since $H(\cdot,t,a\otimes b)$ is continuous for all $(t,a\otimes b)\in[0,1]\times \RR^2\otimes\RR^3$, $\mathcal{H}$ is upper semicontinuous. Thus, we are reduced to show that  $\mathcal{H}$ is lower semicontinuous. To do this, consider $\xi\in\MM^{3\times 2}$ and $\{\xi_n\}_{n\geq 1}\subset\MM^{3\times 2}$ such that:
\begin{itemize}
\item[\SMALL$\blacklozenge$] $\xi_n\to\xi$;  
\item[\SMALL$\blacklozenge$] $\sup_{n\geq 1}\mathcal{H}(\xi_n)<+\infty$;
\item[\SMALL$\blacklozenge$] $\lim_{n\to+\infty}\mathcal{H}(\xi_n)=\liminf_{n\to+\infty}\mathcal{H}(\xi_n)$, 
\end{itemize}
and prove that  
$$
\mathcal{H}(\xi)\leq\lim_{n\to+\infty}\mathcal{H}(\xi_n).
$$ 
By Lemma \ref{KhonStrangFormulaProofContinuitetInfAtteint}, for every $n\geq 1$, there exists $(t_n,a_n\otimes b_n)\in[0,1]\times\RR^2\otimes\RR^3$ such that $\mathcal{H}(\xi_n)=H(\xi_n,t_n,a_n\otimes b_n)$. Without loss of generality we can assume that $t_n\to t\in[0,1]$. From the coercivity of  $h$, we deduce that (\ref{KhonStrangFormulaProofContinuitetInfAtteintCond1}) holds with  $F_n:=\xi_n-t_na_n\otimes b_n$ and $G_n:=\xi_n+(1-t_n)a_n\otimes b_n$. As in the proof of Lemma \ref{KhonStrangFormulaProofContinuitetInfAtteint}, we consider two cases.

\smallskip

{\em Case where $t\in]0,1[$.} Using the same arguments as in the proof of Lemma \ref{KhonStrangFormulaProofContinuitetInfAtteint}, we obtain $G_n-F_n=a_n\otimes b_n\to a\otimes b$ with $a\in\RR^2$ and $b\in\RR^3$. Hence
$$
\lim_{n\to+\infty}\mathcal{H}(\xi_n)=\lim_{n\to+\infty}H(\xi_n,t_n,a_n\otimes b_n)=H(\xi,t,a\otimes b)\geq \mathcal{H}(\xi)
$$ 
since $H$ is continuous.

\smallskip

{\em Case where either $t=0$ or $t=1$.} Assume that $t=1$ (the case $t=0$ can be treated in the same way). Then $t_n\geq\beta>0$ for all $n\geq 1$. As $p>1$ and $t_n\to 1$, by (\ref{KhonStrangFormulaProofContinuitetInfAtteintCond1}) we have $G_n\to G$ with $G\in\MM^{3\times 2}$ and $(1-t_n)F_n\to 0$. As $(1-t_n)F_n+t_nG_n=\xi_n$ for all $n\geq 1$. Hence 
$$
G=\lim_{n\to+\infty}(1-t_n)F_n+t_nG_n=\lim_{n\to+\infty}\xi_n=\xi,
$$ 
and consequently 
$$
\lim_{n\to+\infty}t_nh(G_n)=h(\xi)
$$ 
since $h$ is continuous. But $(1-t_n)h(F_n)=H(\xi_n,t_n,a_n\otimes b_n)-t_nh(G_n)$ for all $n\geq 1$, hence 
$$
\lim_{n\to+\infty}(1-t_n)h(F_n)=\lim_{n\to+\infty}\mathcal{H}(\xi_n)-h(\xi)\leq 0
$$
because $\lim_{n\to+\infty}\mathcal{H}(\xi_n)\leq h(\xi)$ since $\mathcal{H}(\xi_n)\leq h(\xi_n)$ pour tout $n\geq 1$. On the other hand, using the coercivity of  $h$, we see that $(1-t_n)h(F_n)\geq C(1-t_n)|F_n|^p$ for all $n\geq 1$ with $C>0$. Hence 
$$
\lim_{n\to+\infty}(1-t_n)h(F_n)\geq C\lim_{n\to+\infty}(1-t_n)|F_n|^p=0.
$$
Thus $\lim_{n\to+\infty}(1-t_n)h(F_n)=0$, and consequently 
$$
\lim_{n\to+\infty}\mathcal{H}(\xi_n)=h(\xi)\geq \mathcal{H}(\xi). 
$$

\smallskip

We prove now that $\mathcal{H}$ is coercive. By the coercivity of $h$ we have  $$
\mathcal{H}(\xi)\geq C\inf\{(1-t)|\xi-ta\otimes b|^p+t|\xi+(1-t)a\otimes b|^p:(t,a\otimes b)\in[0,1]\times\RR^2\otimes\RR^3\}
$$ 
for all $\xi\in\MM^{3\times 2}$ and some $C>0$. But 
$$
(1-t)|\xi-ta\otimes b|^p+t|\xi+(1-t)a\otimes b|^p\geq|(1-t)(\xi-ta\otimes b)+t(\xi+(1-t)a\otimes b)|^p=|\xi|^p,
$$ 
and so $\mathcal{H}(\xi)\geq C|\xi|^p$ for all $\xi\in\MM^{3\times 2}$.
\end{proof}

\medskip

We can now prove Lemmas \ref{LeMmA-A4} and \ref{LeMmA-A5}. As $W$ is continuous and coercive, it is easy to see that $W_0=\mathcal{R}_0W_0$ is also continuous and coercive. Moreover, using Lemma \ref{KhonStrangFormulaProofContinuitetInfAtteint2} with $h=\R_{q}W_0$, we see that if $\R_q W_0$ is continuous and coercive, so is $\R_{q+1}W_0$. Hence, $\R_q W_0$ is continuous and coercive for all $q\geq 0$, which proves Lemma \ref{LeMmA-A4}. As $\rank(\xi_j)=2$, by (\ref{P01}) we have $W_0(\xi_j)<+\infty$. Hence,  $\R_{i+1}W_0(\xi_j)<+\infty$ since $\R_{i+1}W_0\leq W_0$, and Lemma \ref{LeMmA-A5} follows by using Lemma \ref{KhonStrangFormulaProofContinuitetInfAtteint} with  $h=\R_iW_0$.
\end{proof}

\end{document}